\newcommand{\Set}[1]{\left(#1\right)}
\newcommand{\Real}{\mathbb{R}}
\newcommand{\set}[1]{\left\{#1\right\}}
\newtheorem{lem}{Lemma}
\newtheorem{thm}{Theorem}
\newtheorem{rem}{Remark}
\newtheorem{defn}{Definition}
\numberwithin{thm}{section}
\numberwithin{lem}{section}
\numberwithin{cor}{section}
\numberwithin{rem}{section}
\numberwithin{defn}{section}
\numberwithin{ex}{section}
\numberwithin{prop}{section}
\numberwithin{equation}{section}
\numberwithin{equation}{section}
\begin{document}

\centerline {\textsc {\large On tail behaviour of $k$-th upper order statistics}}
\centerline {\textsc{\large under fixed and random sample sizes via tail equivalence}} 
\vspace{0.2in}

\begin{center}
   Sreenivasan Ravi\footnote{Research work supported by University Grants Commission Major Research Project Stat-2013-19168; Corresponding author: ravi@statistics.uni-mysore.ac.in, stat.ravi@gmail.com} and
   Mandagere Chandrashekhar Manohar \footnote{Thanks his employer Acharya Institute of Technology, Bengaluru, for support to carry out this work; mcmanoharmc@gmail.com } \\
   Department of Studies in Statistics\\
University of Mysore, Manasagangotri, Mysuru 570006, India
\end{center}

\vspace{0.2in}

\noindent {\bf Abstract:} For a fixed positive integer $\;k,\;$ limit laws of linearly normalized $\;k$-th upper order statistics are well known. In this article, a comprehensive study of tail behaviours of limit laws of normalized $k$-th upper order statistics under fixed and random sample sizes is carried out using tail equivalence which leads to some interesting tail behaviours of the limit laws. These lead to definitive answers about their max domains of attraction. Stochastic ordering properties of the limit laws are also studied. The results obtained are not dependent on linear norming and apply to power norming also and generalize some results already available in the literature. And the proofs given here are elementary.


\vspace{0.1in} \noindent {\bf Keywords \& phrases:} Tail behaviour, tail equivalence, $k$-th extremes, partial maxima, max domains of attraction, stochastic ordering, Burr distribution. 


\vspace{0.1in} \noindent {\bf MSC 2010 classification:} 60F10


\section{\bf Statement of the problem with motivation}
Extreme value laws are limit laws of linearly normalized partial maxima $M_n=$ \\ $\max\{X_1, \ldots, X_n\}$ of independent and identically distributed (iid) random variables (rvs) $X_1,X_2,\ldots,$ with common distribution function (df) $F,$ namely,
\begin{equation}\label{Introduction_e1}
	\lim_{n\to\infty}P(M_n\leq a_nx+b_n)=\lim_{n\to\infty}F^n(a_nx+b_n)=G(x),\;\;x\in \mathcal{C}(G),
\end{equation}
	where $a_n>0,$ $b_n\in\Real,$ are norming constants, $G$ is a non-degenerate df, $\mathcal{C}(G)$ is the set of all continuity points of $G.$ If, for some non-degenerate df $G,$ a df $F$ satisfies (\ref{Introduction_e1}) for some norming constants $a_n>0,$ $b_n\in\Real,$ then the df $F$ is said to belong to the max domain of attraction of $G$ under linear normalization and we denote it by $F\in \mathcal{D}_l(G).$ Extreme value laws $G$ satisfying (\ref{Introduction_e1}) are types of the following distributions, namely, the Fr\'{e}chet law, $\; \Phi_\alpha(x) = \exp(-x^{-\alpha}),$ $0\leq x;\;$ the Weibull law, $\; \Psi_\alpha(x) =$ $\exp(- |x|^{\alpha}),$ $ x<0;\;$ and the Gumbel law, $\; \Lambda(x)$ $ = \exp(-\exp(-x)),$ $ x\in\Real;$ $\alpha>0\;$ being a parameter. Here and elsewhere in this article dfs are specified only for $\;x\;$ values for which they belong to $\;(0,1)\;$ and probability density functions (pdfs) are specified for values wherein they are positive. Two dfs $G$ and $H$ are said to be of the same type if $G(x) = H(Ax + B), x \in \Real,$ for some constants $A > 0, B \in \Real.$ Extreme value laws $G$ satisfy the stability relation $G^n(A_n x + B_n) = G(x), x \in \Real, n \geq 1,$ where $A_n > 0, B_n \in \Real$ are constants as given below and hence they are also called as max stable laws. The stability relation means that df of linearly normalized maxima of a random sample of size $n$ from df $G$ is equal to $G$ for every $n.$ We have, for $n \geq 1$ and $x \in \Real,$ 
\begin{equation} \label{stable}
			\Phi_\alpha^n\left(n^{1/ \alpha}x\right)  = \Phi_\alpha(x); \;\; 				    \Psi_\alpha^n\left(n^{-1 /\alpha} x\right) =  \Psi_\alpha(x); \;\; 	\text{and}\;\; \Lambda^n(x + \log n) = \Lambda(x). \end{equation}
Note that (\ref{Introduction_e1}) is equivalent to
\begin{eqnarray}\label{Introduction_e1.2}
\lim_{n\to\infty}n\{1-F(a_nx+b_n)\}=-\log G(x), \; x \in \{y\in \Real: G(y) > 0\}.
\end{eqnarray}
Criteria for $F\in \mathcal{D}_l(G)$ are well known (see, for example, Galambos, 1987; Resnick, 1987; Embrechts et al., 1997). These are repeated in the Appendix for ease of reference. 

Let $X_{1:n} \leq X_{2:n} \leq \ldots \leq X_{n:n}$ denote the order statistics from the random sample $\;\{X_1, \ldots, X_n\} \;$ from the df $\;F\;$ and $F \in \mathcal{D}_l(G)$ for some non-degenerate df $G$ so that $F$ satisfies (\ref{Introduction_e1}) for some norming constants $a_n > 0, b_n \in \Real.$ It is well known (see, for example, Galambos, 1987) that the df of the $k$-th upper order statistic $\;X_{n-k+1:n},\;$ for a fixed positive integer $k,$ is given by $\;F_{k:n}(x) = P\left( X_{n-k+1:n} \leq x \right) = \sum_{i=0}^{k-1} \binom ni  F^{n-i}(x) (1 - F(x))^{i}, x \in \Real,$ and the limit $G_k(x) = \lim_{n\to\infty} F_{k:n}(a_n x + b_n) $ is given by 
\begin{equation}\label{eqn:r_max_df}
G_k(x) = G(x) \sum_{i=0}^{k-1} \frac{(- \log G(x))^i}{i!},\;\;  x \in \{y: G(y) > 0\}. 
\end{equation} 

An obvious question is whether $G_k$ satisfies some stability relation like (\ref{stable}). Though we are unable to give a satisfactory answer to this question, we discuss implication of such a stability condition in this article. We show that $\;G_k\;$ is tail equivalent to $\;1 - (1-F(\cdot))^k\;$ and look at the consequences of this, one of which, for example, is that $\;G_k \in \mathcal{D}_l(\Phi_{k\alpha})\;$ if $F\in \mathcal{D}_l(\Phi_{\alpha}).\;$ These lead to generation of an hierarchy of Fr\'{e}chet laws with different integer exponents.   Similar questions are also discussed when the sample size $\;n\;$ is a rv. A few of the results obtained here are available in Peng et al. (2010) and Barakat and Nigm (2002) but the proofs given here are elementary using tail equivalence unlike the proofs in these two articles. Results given here also generalize results given in Peng et al. (2010) and hold under power norming also. Analogous results for lower order statistics can be written down using results proved in this article. 

This article is organized as follows. The next section discusses tail equivalence of $\;G_k \;$ and its consequences when sample size $\;n\;$ is fixed. The limiting distribution of normalized $\;k$-th upper order statistic $\;X_{n-k+1:n}\;$ when the sample size $\;n\;$ is discrete uniform rv is discussed in Section 3 and in Section 4 when the random sample size is binomial, Poisson, logarithmic, geometric, and negative binomial. Tail behaviour of the limit law obtained in Barakat and Nigm (2002) is also discussed in this section. Discrete uniform random maxima appears for the first time in the literature in this article though this was announced by the first author in the International Congress of Mathematicians 2006 held at Madrid, Spain (see Ravi, 2006). Some examples are given in Section 5 and stochastic orderings are discussed in Section 6. Proofs of some of the results are given after the statements and proofs of some main results along with some preliminary results are given in Section 7. Section 8 has some known results used in the article which are repeated here for ease of reading. 

Throughout this article we use the following notations. For a df $\;F,\;$ its left extremity is denoted by $\;l(F)= \inf \set{x: F(x)>0},\;$ its right extremity by $\;r(F) = \sup\{x \in \Real: F(x) < 1\},$ and $\;F^{-}(y) = \inf \left\{ x : F(x) \geq y \right\}, y \in \Real. \;$
The symbol  $\; \stackrel{d}\rightarrow \;$ denotes convergence in distribution,  for two functions $\;g(.)\;$ and $\;h(.),\;$ $g(x) \sim h(x)\;$ as $\;x \rightarrow a\;$ if $\;\lim\limits_{x\rightarrow a} \dfrac {g(x)}{h(x)}=1\;$ so that $\;-\ln x \sim 1-x\;$ as $\;x\rightarrow 1\;$ and $\;-\ln F(x) \sim 1-F(x)\;$ as $\;x\rightarrow r(F)\;$ for any df $\;F.\;$ Further,  $\;g(x) \sim o( h(x))\;$ as $\;x \rightarrow a\;$ if $\;\lim\limits_{x\rightarrow a} \dfrac {g(x)}{h(x)}=0.$

\section{\bf Limiting behaviour of $\;k$-th upper order statistic when sample size is fixed}
We study a possible stability relation for the df $\;F_{k:n}\;$ of the $\;k$-th upper order statistic from a random sample of size $\;n\;$ from a df $\;F,\;$ where $\;k\;$ is a fixed positive integer. For convenience, wherever necessary, we assume that the df $\;F\;$ is absolutely continuous with pdf $\;f.$ 

\subsection{On stability relation for df of $\;k$-th upper order statistic}	
\begin{thm} If $\;F\in D_l(G)\;$ for some max stable df $\;G\;$ so that $\;F\;$ satisfies (\ref{Introduction_e1}) for some norming constants $\;a_n>0,b_n \in \Real,\;$ and $\;F_{k:n}\;$ satisfies the stability relation $\;F_{k:n}(a_n x+b_n)=F(x),\;$ $  x\in R,\;$ $n\ge 1,\;$  then $\;F(x) = 0\;$ if $\;G(x) = 0\;$ and 
\begin{eqnarray} \label{Stability-k}
 F(x)=G(x) \sum_{i=0}^{k-1} \frac {(-\ln G(x))^i}{i!}, \;\;\;\;x \in \{y: G(y) > 0\}, \;\;G =\Phi_\alpha\; \mbox{or} \; \Psi_\alpha \; \mbox{or} \; \Lambda. 
\end{eqnarray}
\end{thm}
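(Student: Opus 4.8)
The plan is to read off the conclusion by combining the assumed stability relation with the limit already recorded in (\ref{eqn:r_max_df}). Since $F\in D_l(G)$, the classical convergence of the normalized $k$-th upper order statistic gives $F_{k:n}(a_n x+b_n)\to G_k(x)$ as $n\to\infty$, where $G_k$ is the function displayed in (\ref{eqn:r_max_df}); on $\{y:G(y)>0\}$ this is the pointwise limit stated there, and at the remaining points it holds at every continuity point of the limit df $G_k$. The assumed stability relation $F_{k:n}(a_n x+b_n)=F(x)$ says precisely that, for each fixed $x$, the left-hand side is a sequence in $n$ that is identically equal to the constant $F(x)$.

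First I would fix $x$ and compare the two descriptions of the same sequence. For $x\in\{y:G(y)>0\}$ the pointwise statement in (\ref{eqn:r_max_df}) forces the constant sequence $\{F(x)\}_{n\ge 1}$ to have limit $G_k(x)$, so $F(x)=G_k(x)$, which is exactly the right-hand side of (\ref{Stability-k}). For the remaining $x$ the same comparison holds at every continuity point of $G_k$, and since the set of continuity points is the complement of an at most countable set, the identity $F=G_k$ holds on a dense subset of $\Real$. As $F$ and $G_k$ are both right-continuous, approaching any point from the right through continuity points of $G_k$ shows $F\equiv G_k$ on all of $\Real$.

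It remains to unwind what $F=G_k$ says on $\{y:G(y)=0\}$, which gives the first assertion. Writing $u=G(x)$ and letting $u\to 0^+$, each summand $u(-\ln u)^i/i!$ tends to $0$, so $G_k(x)=0$ and therefore $F(x)=0$ wherever $G(x)=0$. Finally, since a max stable $G$ is by definition a type of one of $\Phi_\alpha,\Psi_\alpha,\Lambda$, I would absorb the type constants $A>0,B\in\Real$ into the norming sequences $a_n,b_n$ and assume without loss of generality that $G$ is exactly one of these three standard laws; the three named cases in (\ref{Stability-k}) then follow at once.

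I do not expect a genuine obstacle here, which is consistent with the paper's stated aim of an elementary argument: the stability hypothesis simply collapses the limit in (\ref{eqn:r_max_df}) onto the constant $F(x)$. The only points needing care are the passage from continuity points of $G_k$ to all of $\Real$ via right-continuity, and the verification that the terms $u(-\ln u)^i$ vanish as $u\to 0^+$, so that $F=0$ on $\{G=0\}$ (a nonempty half-line of finite points only in the Fr\'echet case).
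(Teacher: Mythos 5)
Your proposal is correct and follows essentially the same route as the paper: fix $x$, use the stability relation to see that $F_{k:n}(a_nx+b_n)$ is the constant sequence $F(x)$, and identify its limit with $G_k(x)$ from (\ref{eqn:r_max_df}) --- the paper's proof simply recomputes that limit inline via $\binom{n}{i}F^{n-i}(1-F)^i \to G(x)(-\ln G(x))^i/i!$ rather than citing it. Your extra care with the set $\{G=0\}$ (noting $u(-\ln u)^i\to 0$ as $u\to 0^+$) and with continuity points actually covers the edge case $F(x)=0$ when $G(x)=0$, which the paper's proof leaves implicit.
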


\begin{proof}[\textbf {Proof:}] We have $\;F_{k:n}(x) = P(X_{n-k+1:n} \leq x)$ $= \sum_{i=0}^{k-1} \binom ni F^{n-i}(x) (1-F(x))^{i}, \; x \in \Real.\;$ If
\begin{equation} F_{k:n}(a_n x+b_n) = \sum_{i=0}^{k-1} \binom ni F^{n-i}(a_nx+b_n) (1-F(a_nx+b_n))^{i}  = F(x), \label{eqn:r}\end{equation}
then taking limit as $\;n \rightarrow \infty\;$ in (\ref{eqn:r})  we have $\;F(x) = \lim_{n \rightarrow \infty} F_{k:n}(a_nx+b_n) $
\begin{eqnarray}
 & = & \lim_{n \rightarrow \infty}  \sum_{i=0}^{k-1} \frac 1{i!} \left(F^n(a_nx+b_n)\right)^{1-\frac in} \prod_{j=0}^{i-1} \left( \frac {n-j}n n(1-F(a_nx+b_n))\right) \nonumber \\
&=&  \sum_{i=0}^{k-1}\frac 1{i!} G(x)   \prod_{j=0}^{i-1} (-\ln G(x)) =G(x) \sum_{i=0}^{k-1}\frac{(-\ln G(x))^i}{i!}   \label{nonrandomlimit}
\end{eqnarray} so that (\ref{Stability-k}) holds.
\end{proof}

\begin{rem} For $\;k=1,$ $F_{1:n}\;$ is the df of the partial maxima $\;X_{n:n}\;$ and in this case if $\;F=G,\;$ then $\;G\;$ satisfies the stability relation (\ref{stable}). However, though $\;G \in D_l(G),\;$ note that, with $\;F = G\;$ in (\ref{eqn:r}), we end up with $\;G(x) \sum_{i=0}^{k-1} \frac {(-\ln G(x))^i}{i!} = G(x),\;$ and hence $\;G\;$ has to be a degenerate df, a contradiction. This means that the $\;k$-th upper order statistic from a random sample from one of the extreme value distributions will not satisfy a property like (\ref{eqn:r}). However, we show later that if a df $\;F \in D_l(G) \;$ for some $\;G,\;$ then a function of the type $\;F(x) \sum_{i=0}^{k-1} \dfrac {(-\ln F(x))^i}{i!} \;$ is a df and belongs to $\;D_l(G)\;$ with possibly a different exponent.  \label{rem:r}
\end{rem}

\begin{defn} \label{Def-Fk} For any df $\;F\;$ and fixed integer $\;k \ge 1,\;$ define $F_k(x)=F(x)\sum\limits_{i=0}^{k-1} \dfrac {(-\ln F(x))^i}{i!},\;$ $x \in \{y: F(y)>0\}.\;$ \end{defn}

\begin{thm}
Let $\;X\;$ have absolutely continuous df $\;F \;$ with pdf $\;f.\;$ Then the df $\;F_k \;$ satisfies the recurrence relation 
 \begin{equation} F_{k+1}(x)=F_k(x)+\dfrac {F(x)}{k!} (-\ln F(x))^k, \quad k\ge 1, \;\;x\in \{y: F(y)>0\}. \label{eqn:recdf} \end{equation}
Further, the pdf of $\;F_{k+1}\;$ is given by  
\begin{equation} f_{k+1}(x)=\dfrac{f(x)}{k!} (-\ln F(x))^k, \quad k\ge 1, \;\;x\in \{y: F(y)>0\}.\label{eqn:recpdf} \end{equation}  \label{thm:r-max-df}
\end{thm}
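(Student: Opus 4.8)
The plan is to treat the two assertions separately, obtaining the recurrence first by pure algebra and then the density by differentiation. For the recurrence (\ref{eqn:recdf}), I would simply split off the top-index summand in Definition~\ref{Def-Fk}: since $F_{k+1}(x) = F(x)\sum_{i=0}^{k}\frac{(-\ln F(x))^i}{i!}$, isolating the $i=k$ term gives $F_{k+1}(x) = F(x)\sum_{i=0}^{k-1}\frac{(-\ln F(x))^i}{i!} + \frac{F(x)}{k!}(-\ln F(x))^k$, and the first summand is exactly $F_k(x)$ by definition. No analytic input is needed here; the identity holds for every $x$ with $F(x)>0$.

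For the density (\ref{eqn:recpdf}), the plan is to differentiate $F_{k+1}$ directly on the set $\{y : F(y)>0\}$, where absolute continuity of $F$ lets me use $F'=f$ together with the chain rule. Writing $L(x) = -\ln F(x)$, so that $L'(x) = -f(x)/F(x)$ wherever $F(x)>0$, the product rule gives, for each $i \ge 1$, $\frac{d}{dx}\left(F\,\frac{L^i}{i!}\right) = f\,\frac{L^i}{i!} - f\,\frac{L^{i-1}}{(i-1)!}$, the $F$ in the denominator of $L'$ cancelling against the $F$ carried by the product; the $i=0$ term contributes just $f$.

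The crux is then the telescoping: summing the derivatives above over $0\le i\le k$, the subtracted part $f\,L^{i-1}/(i-1)!$ of the $i$-th term cancels the added part $f\,L^{i-1}/(i-1)!$ of the $(i-1)$-th term, so all intermediate contributions vanish and only the top term $f\,L^k/k!$ survives. This yields $f_{k+1}(x) = \frac{f(x)}{k!}(-\ln F(x))^k$, as claimed. An alternative route is induction: differentiate the recurrence (\ref{eqn:recdf}), use the chain-rule computation $\frac{d}{dx}\left(\frac{F}{k!}L^k\right) = \frac{f}{k!}L^k - \frac{f}{(k-1)!}L^{k-1}$, and substitute the inductive hypothesis $f_k = \frac{f}{(k-1)!}L^{k-1}$ to reach the same formula; the base case $k=1$ is the elementary check that $F_2 = F - F\ln F$ has density $-f\ln F$.

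There is no real obstacle here, as both assertions are elementary. The only points requiring care are keeping track of indices in the telescoping (equivalently, verifying the base case and the single inductive step) and restricting attention to the set where $F(x)>0$, since $\ln F$ and hence $L$ are defined only there; this is exactly why the statements are phrased on $\{y : F(y)>0\}$ and why absolute continuity of $F$ is invoked, to guarantee $F'=f$ on that set.
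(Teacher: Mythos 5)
Your proposal is correct. The recurrence part is exactly the paper's argument: split off the $i=k$ summand in the definition of $F_{k+1}$. For the density, the paper proves (\ref{eqn:recpdf}) by induction, differentiating the recurrence (\ref{eqn:recdf}) and using $f_m = \frac{f}{(m-1)!}(-\ln F)^{m-1}$ to cancel the lower-order term --- which is precisely the ``alternative route'' you sketch, down to the base case $f_2 = -f\ln F$. Your primary route, the one-pass telescoping sum $\frac{d}{dx}\bigl(F\,\frac{L^i}{i!}\bigr) = f\,\frac{L^i}{i!} - f\,\frac{L^{i-1}}{(i-1)!}$ summed over $0 \le i \le k$, is just that same induction unrolled: the cancellation of adjacent terms in your telescope is the inductive step repeated $k$ times. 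So the two proofs have identical mathematical content; yours is marginally more direct in that it dispenses with the base-case/step bookkeeping, while the paper's inductive phrasing reuses the recurrence it has just established. Your attention to the domain $\{y : F(y)>0\}$ (where $\ln F$ is defined and $F'=f$ can be invoked) matches the paper's hypotheses, and one could add, as the paper does in closing, the observation that $f_{k+1} \ge 0$ since $0 < F < 1$ there --- worth a line if you want to confirm $f_{k+1}$ is a genuine density.
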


\begin{proof}[\textbf{Proof:}] The proof of the relation (\ref{eqn:recdf}) is evident from the definition of $F_k\;$ since 
\[ F_{k+1}(x)-F_k(x)=F(x)\sum_{i=0}^k \dfrac{(-\ln F(x))^i}{i!}-F(x)\sum_{i=0}^{k-1}\dfrac{(-\ln F(x))^i}{i!}=\dfrac{F(x)}{k!}(-\ln F(x))^k.\]
We use induction to prove (\ref{eqn:recpdf}). 
For $k=1$, we have $\;F_2(x)=F(x)(1-\ln F(x)).\;$ By differentiation, $\;f_2(x)=$ $f(x)(1-\ln F(x))$ $ + F(x) \left(-\dfrac {f(x)}{F(x)}\right)$ $=f(x)(-\ln F(x))\ge 0,\;$ which is (\ref{eqn:recpdf}) for $\;k=1.\;$ Assuming that (\ref{eqn:recpdf}) is true for $\;k=m,\;$ we have $ F_{m+2} (x)=F_{m+1}(x)+\dfrac {F(x)}{(m+1)!} (-\ln F(x))^{m+1},$ and upon differentiating with respect to $\;x,\;$ we get $\;f_{m+2} (x) = $ $ f_{m+1}(x)+\frac{f(x)}{(m+1)!} \set{-(m+1)(-\ln F(x))^m + (-\ln F(x))^{m+1}}$ $ = \dfrac {f(x)}{(m+1)!} (-\ln F(x))^{m+1},\;$ which is (\ref{eqn:recpdf}) with $\;k = m+1.\;$ Finally since $\;f(x)\ge 0\;$ and $\;0<F(x)<1,\;$ it follows that $\;f_{k+1}(x)\ge 0,\;$ completing the proof.
\end{proof}

The following theorem whose proof is given in Section 7 shows that any positive power of tail of a df is also a tail and looks at the tail behaviour of the tail $\;(1-F(.))^k \;$ and this will be used to study the tail behaviour of $\;F_k \;$ in the next result. 

\begin{thm} If $\;F\;$ is a df with pdf $\;f,\;$  then for every positive integer $\;k,\;$ $\;\left(1-F(x)\right)^k\;$ is tail of the df $\;H_k(x)=1-\left(1-F(x)\right)^k,\ x\in \Real, \;$ and $\;H_k\;$ is also absolutely continuous with pdf $\; H_k^\prime(x)=k\set{1-F(x)}^{k-1} f(x), x \in \Real.$ Further, the following are true. 
\begin{enumerate}
\item[(a)] If $\;F\in D_l(\Phi_\alpha),\;$ then $\;r(H_k)=r(F)=\infty,\;$ and $\;H_k \in D_l(\Phi_{k\alpha})\;$ so that (\ref{Introduction_e1}) holds with $\;F = H_k, G = \Phi_{k\alpha},\;$ $a_n =F^-(1 - (1/n)^{1/k}), b_n=0.$ 
\item[(b)] If $\;F\in D_l(\Psi_\alpha)\;$ then $\;r(H_k)=r(F)<\infty,\;$  and $\;H_k \in D_l(\Psi_{k\alpha})\;$ so that (\ref{Introduction_e1}) holds with $\;F = H_k, G = \Psi_{k\alpha},\;$ $a_n =r(F)-F^- (1 -  (1/n)^{1/k}), b_n=r(F).$
\item[(c)] If $\;F\in D_l(\Lambda),$ $\;a_n = v(b_n)\;$ and
$\;b_n = F^{-}\left(1 - \frac{1}{n}\right) \;$ as in (c) of Theorem $\ref{T-l-max}$ so that (\ref{Introduction_e1}) holds with $\;G = \Lambda,\;$ then $\;r(H_k)=r(F),\;$ and $\;H_k  \in D_l(\Lambda)\;$ so that (\ref{Introduction_e1}) holds with $\;F = H_k, G = \Lambda,\;$  $\;a_n = \dfrac{v(b_n)}{k}, b_n = H_k^-( 1 - 1/n).\;$ 
\end{enumerate} \label{lem:1} \end{thm}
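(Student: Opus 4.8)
The plan is to verify the preliminary claims directly and then obtain the three domain statements by rewriting each Gnedenko-type criterion recorded in Theorem \ref{T-l-max} in terms of the tail $1-F$, raising it to the $k$-th power, and reading off the corresponding criterion for $1-H_k=(1-F)^k$. First, since $0\le 1-F(x)\le 1$ and $t\mapsto 1-(1-t)^k$ is increasing on $[0,1]$, the function $H_k$ inherits monotonicity and right-continuity from $F$ and has the correct limits at $\pm\infty$, so it is a df whose tail is $1-H_k=(1-F)^k$; as $t\mapsto 1-(1-t)^k$ is $C^1$ and $F$ is absolutely continuous, $H_k$ is absolutely continuous and the chain rule gives $H_k'(x)=k(1-F(x))^{k-1}f(x)\ge 0$ a.e. Moreover $1-H_k(x)=(1-F(x))^k$ vanishes exactly when $1-F(x)$ does, so $r(H_k)=r(F)$ in every case, which is $\infty$ in (a) and finite in (b).

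For (a) and (b) I would verify (\ref{Introduction_e1.2}) directly. Put $u_n=(1/n)^{1/k}$; continuity of $F$ gives $1-F(F^-(1-u_n))=u_n$, and the identity $n\,u_n^{\,k}=1$ reduces the computation to a single ratio. In (a), with $a_n=F^-(1-u_n)$ and $b_n=0$ one gets $n(1-H_k(a_nx))=\big((1-F(a_nx))/(1-F(a_n))\big)^k$, and since $a_n\to\infty$ the $D_l(\Phi_\alpha)$ criterion (regular variation of $1-F$ with index $-\alpha$) sends the inner ratio to $x^{-\alpha}$, hence the whole expression to $x^{-k\alpha}=-\log\Phi_{k\alpha}(x)$, giving $H_k\in D_l(\Phi_{k\alpha})$. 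Case (b) is identical after reflecting at $r(F)$: writing $s=r(F)-F^-(1-u_n)\to 0+$, the relevant ratio is $(1-F(r(F)-|x|s))/(1-F(r(F)-s))\to |x|^\alpha$, whose $k$-th power tends to $|x|^{k\alpha}=-\log\Psi_{k\alpha}(x)$.

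Case (c) is the one structural step, and I would exploit that the von Mises auxiliary function is determined only up to asymptotic equivalence. If $v$ satisfies $(1-F(t+xv(t)))/(1-F(t))\to e^{-x}$ as $t\to r(F)$, then $(1-H_k(t+x v(t)/k))/(1-H_k(t))=\big((1-F(t+(x/k)v(t)))/(1-F(t))\big)^k\to(e^{-x/k})^k=e^{-x}$, so $v/k$ is an auxiliary function for $H_k$. Hence $H_k\in D_l(\Lambda)$ with the standard Gumbel norming $b_n=H_k^-(1-1/n)$ and $a_n=v(b_n)/k$.

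I expect the difficulty to be bookkeeping rather than conceptual: one must justify $1-F(F^-(1-u))=u$ from continuity of $F$ and check that $a_n\to r(F)$ so that the criteria apply, carry out the reflection in (b) so that the roles of $x$ and $|x|$ match the $\Psi_{k\alpha}$ tail, and---the only genuinely delicate point---recognize that the correct auxiliary function in (c) is $v/k$ rather than $v$.
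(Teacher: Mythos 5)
Your proposal is correct, and for the preliminary claims and parts (a)--(b) it is essentially the paper's argument: the paper likewise gets $r(H_k)=r(F)$ (by a short contradiction, where you observe more directly that $1-H_k=(1-F)^k$ vanishes exactly where $1-F$ does), and proves (a), (b) by raising the regular-variation ratio to the $k$-th power, $\frac{1-H_k(tx)}{1-H_k(t)}=\left(\frac{1-F(tx)}{1-F(t)}\right)^k\to x^{-k\alpha}$, then reading the norming constants off $\left(\frac{1}{1-H_k}\right)^-(n)=\left(\frac{1}{1-F}\right)^-\left(n^{1/k}\right)$; your direct verification of (\ref{Introduction_e1.2}) via $n(1-H_k(a_nx))=\bigl((1-F(a_nx))/(1-F(a_n))\bigr)^k$ is the same computation packaged differently. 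The genuine divergence is in part (c). The paper verifies the von-Mises second-derivative condition (\ref{von-Mises}) for $H_k$: it computes $H_k'$ and $H_k''$ in terms of $F'$, $F''$ and shows $\lim_{x\to r(F)}\frac{(1-H_k)H_k''}{(H_k')^2}=\lim \frac{1}{k}\left(\frac{(1-F)F''}{(F')^2}-k+1\right)=-1$, then invokes (e) of Theorem \ref{T-l-max}. You instead work directly with the tail criterion in (c) of Theorem \ref{T-l-max}: from $\frac{1-F(t+xv(t))}{1-F(t)}\to e^{-x}$ you get $\frac{1-H_k(t+x\,v(t)/k)}{1-H_k(t)}=\left(\frac{1-F(t+(x/k)v(t))}{1-F(t)}\right)^k\to e^{-x}$, so $v/k$ is an auxiliary function for $H_k$. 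Your route is slightly more general and more economical: the paper's computation tacitly requires $F$ to be twice differentiable (beyond the stated hypothesis that $F$ is absolutely continuous with pdf $f$), whereas yours needs only the $D_l(\Lambda)$ criterion itself, and it identifies the auxiliary function $v/k$ by the same one-line scaling trick used in (a) and (b). What the paper's heavier computation buys is the explicit von-Mises property of $H_k$, which it reuses when writing down auxiliary functions and norming constants in the Examples section; note also that both arguments implicitly use the standard fact that any valid auxiliary function may be used to form the norming constants $a_n=v(b_n)/k$, $b_n=H_k^-(1-1/n)$, which is worth stating if you write this up.
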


\begin{rem}If $\;F\;$ is a df, for every positive integer $\;k,$ $F^k\;$ is a df and  if $\;1-F\;$ is  tail of a df then $\;(1-F)^k\;$ is also a tail.  \label{rem:1}\end{rem}

The next result is for fixed sample size and its proof is given in Section 7. 

\begin{thm} Let rv $\;X\;$ have absolutely continuous df $\;F\;$ with pdf $\;f\;$ and $\;k\;$ be a positive integer. Then for $\; F_k(x)=F(x)\sum_{i=0}^{k-1} \dfrac {(-\ln F(x))^i}{i!},$ $\;x \in \{y: F(y) > 0\},\;$ the following results are true.
\begin{enumerate}
\item[(a)] $F_k\;$ is a df with $\;r(F_k) = r(F),\;$ pdf $\;f_k(x)=\dfrac{f(x)}{(k-1)!} (-\ln F(x))^{k-1},\;$ $x \in \{y \in \Real: F(y) > 0\};$ and $\;\lim_{x \rightarrow r(F)} \dfrac{1-F_k(x)}{(1-F(x))^k} = \dfrac{1}{k!},\;$ so that $\;F_k\;$ is tail equivalent to $\;H_k.$ 
\item[(b)]  If $\;F\in D_l(\Phi_\alpha),\;$ then $\;r(F_k)=r(F)=\infty,\;$ and $\;F_k \in D_l(\Phi_{k\alpha})\;$ so that (\ref{Introduction_e1}) holds with $\;F = F_k, G = \Phi_{k\alpha},\;$ $ \;a_n =F^- (1 - (k!/n)^{1/k}), b_n=0.\;$ 
\item[(c)] If $\;F\in D_l(\Psi_\alpha)\;$ then $\;r(F_k)=r(F)<\infty,\;$ and $\;F_k \in D_l(\Psi_{k\alpha})\;$ so that (\ref{Introduction_e1}) holds with $\;F = F_k, G = \Psi_{k\alpha},\;$ $\;a_n =r(F) - F^-(1 - (k!/n)^{1/k}), b_n=r(F).\;$ 
\item[(d)] If $\;F\in D_l(\Lambda),$ $\;a_n = v(b_n)\;$ and
$\;b_n = F^{-}\left(1 - \frac{1}{n}\right) \;$ as in (c) of Theorem $\ref{T-l-max}$ so that (\ref{Introduction_e1}) holds with $\;G = \Lambda,\;$ then $\;r(F_k)=r(F),\;$ and $\;F_k  \in D_l(\Lambda)\;$ so that (\ref{Introduction_e1}) holds with $\;F = F_k, G = \Lambda,\;$ $\;a_n = \dfrac{v(b_n)}{k}, b_n = F_k^-( 1 - 1/n).\;$ 
\end{enumerate} \label{Thm-Fk}
\end{thm}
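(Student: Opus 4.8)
The plan is to prove part~(a) directly and then to bootstrap it, via tail equivalence, into (b)--(d) using Theorem~\ref{lem:1}.

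For (a), the pdf formula is immediate from Theorem~\ref{thm:r-max-df}: reindexing the recurrence (\ref{eqn:recpdf}) (replace $k$ by $k-1$, the case $k=1$ being $F_1=F$, $f_1=f$) gives $f_k(x)=\frac{f(x)}{(k-1)!}(-\ln F(x))^{k-1}\ge 0$. To confirm that $F_k$ is a genuine df with $r(F_k)=r(F)$, I would check the two boundary limits: as $x\to r(F)$ we have $F(x)\to 1$, so $-\ln F(x)\to 0$ and only the $i=0$ summand survives, giving $F_k(x)\to 1$; as $x\to l(F)$ we have $F(x)\to 0$, and since $t(-\ln t)^i\to 0$ as $t\to 0^+$ for each fixed $i$, every summand tends to $0$, whence $F_k(x)\to 0$. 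Continuity is inherited from $F$ and monotonicity from $f_k\ge 0$. For the tail-equivalence claim I would substitute $u=-\ln F(x)$, so that $u\to 0^+$ as $x\to r(F)$ and $F(x)=e^{-u}$. Writing $\sum_{i=0}^{k-1}u^i/i!=e^u-\sum_{i\ge k}u^i/i!$ yields $1-F_k(x)=e^{-u}\sum_{i\ge k}u^i/i!\sim u^k/k!$, while $(1-F(x))^k=(1-e^{-u})^k\sim u^k$; the ratio therefore tends to $1/k!$. Since $1-H_k=(1-F)^k$, this is exactly $\lim_{x\to r(F)}(1-F_k(x))/(1-H_k(x))=1/k!$, i.e.\ $F_k$ and $H_k$ are tail equivalent.

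For (b)--(d), the engine is that tail equivalence preserves the max domain of attraction: since $1-F_k\sim\frac1{k!}(1-H_k)$ and Theorem~\ref{lem:1} places $H_k$ in $D_l(\Phi_{k\alpha})$, $D_l(\Psi_{k\alpha})$, $D_l(\Lambda)$ respectively, the same membership holds for $F_k$, and the extremity claims follow from $r(F_k)=r(F)$ established in (a). For Fr\'echet and Weibull this is transparent, because $1-H_k$ is regularly varying of index $-k\alpha$ and multiplying by a positive constant leaves the index unchanged. It then remains only to verify the explicit constants through (\ref{Introduction_e1.2}): with $a_n=F^-(1-(k!/n)^{1/k})$ one has $(1-F(a_n))^k\approx k!/n$, so that $n(1-F_k(a_nx))\sim\frac{n}{k!}(1-F(a_nx))^k\sim\frac{n}{k!}(1-F(a_n))^k x^{-k\alpha}\to x^{-k\alpha}=-\ln\Phi_{k\alpha}(x)$, the middle step being regular variation; the Weibull case is identical after reflecting through $r(F)$.

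The delicate case, and the one I expect to be the main obstacle, is the Gumbel part~(d), where the stated location is $F_k$'s own quantile $b_n=F_k^-(1-1/n)$ and the scale carries a factor $1/k$. Here I would compute the von Mises auxiliary function of $F_k$ directly, using $f_k(x)=\frac{f(x)}{(k-1)!}(-\ln F(x))^{k-1}$, the equivalence $-\ln F(x)\sim 1-F(x)$, and $1-F_k\sim\frac1{k!}(1-F)^k$:
\[
\frac{1-F_k(x)}{f_k(x)}\;\sim\;\frac{\frac1{k!}(1-F(x))^k}{\frac{1}{(k-1)!}f(x)(1-F(x))^{k-1}}\;=\;\frac1k\,\frac{1-F(x)}{f(x)}\;=\;\frac{v(x)}{k},
\]
which explains the scale $v(b_n)/k$ and, through the characterization in Theorem~\ref{T-l-max}(c), confirms that $F_k\in D_l(\Lambda)$ with the asserted constants. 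The only care needed throughout is to justify composing the $\sim$ relations inside $n(1-F_k(a_nx+b_n))$, which is routine given the regular or rapid variation already supplied by Theorem~\ref{lem:1}.
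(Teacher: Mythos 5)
Your proposal is correct, and its skeleton (pdf from Theorem~\ref{thm:r-max-df}, tail equivalence to $H_k$, transfer of max domain membership via Theorem~\ref{lem:1}) matches the paper's, but two of your local arguments are genuinely different and worth comparing. For the core analytic fact in (a), the paper proves $r(F_k)=r(F)$ by a separate contradiction argument (comparing the truncated sum with $F(r(F_k))\exp(-\ln F(r(F_k)))=1$) and then obtains the limit $1/k!$ by L'Hospital's rule through the function $h(x)=\frac{-\ln F(x)}{1-F(x)}$; your substitution $u=-\ln F(x)$ instead yields the exact identity $1-F_k(x)=e^{-u}\sum_{i\ge k}u^i/i!$, which settles both claims at once — strict positivity for $u>0$ gives $r(F_k)\ge r(F)$ (do state this step explicitly, since the boundary limit $F_k(x)\to 1$ alone only gives $r(F_k)\le r(F)$), and the Taylor asymptotics $e^{-u}\sum_{i\ge k}u^i/i!\sim u^k/k!$, $(1-e^{-u})^k\sim u^k$ give the ratio $1/k!$ without any differentiation. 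In (b)--(c) you verify the norming constants directly through (\ref{Introduction_e1.2}) where the paper inverts quantiles via $\bigl(\tfrac{1}{1-F_k}\bigr)^-(n)=\bigl(\tfrac{1}{1-F}\bigr)^-\bigl(\tfrac{n}{k!}\bigr)^{1/k}$ — equivalent content, different bookkeeping. In (d) the paper simply appeals to the proof of (c) of Theorem~\ref{lem:1} together with tail equivalence, whereas you additionally compute $\frac{1-F_k(x)}{f_k(x)}\sim\frac{v(x)}{k}$, which is a nice direct explanation of the scale $v(b_n)/k$; note only that this computation implicitly takes $v=(1-F)/f$, i.e.\ assumes the von Mises form of $F$, which is the same implicit strengthening of the hypothesis $F\in D_l(\Lambda)$ that the paper itself makes in proving (c) of Theorem~\ref{lem:1}, and that the rigorous conclusion $F_k\in D_l(\Lambda)$ is most safely obtained exactly as you indicate, by pushing the constant $1/k!$ through the characterization $\lim_{t\uparrow r(F)}\frac{1-F_k(t+(v(t)/k)x)}{1-F_k(t)}=e^{-x}$ inherited from $H_k$.
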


\section{\bf Limiting behaviour of $k$th upper order statistics when sample size is discrete uniform rv}

In this section, we assume that the sample size $\;n\;$ in the previous section is a discrete uniform rv $\;N_n\;$ with probability mass function (pmf) $\;P(N_n=r)=\frac 1n,$ $ r=m+1,m+2,\ldots,m+n,\;$ $N_n\;$ independent of the iid rvs $\;X_1, X_2, \ldots,\;$ where $\;m\ge 1\;$ is a fixed integer. The tail behaviour of the limit of linearly normalized $\;k$-th upper order statistic $\;X_{N_n-k+1:N_n}\;$ which is the $\;k$-th maximum among $\; \set{X_1,X_2,\ldots, X_{N_n}}\;$ is discussed here. Observe that $\;X_{N_n-k+1:N_n}$ is well defined for $\;1\le k \le m.\;$ 

For fixed integer $\;k, \, 1 \le k \le m,\;$ the df of the $\;k$-th upper order statistic of a sequence of iid rvs with random sample size $\;N_n\;$ is given by 
\begin{eqnarray}
F_{k:N_n}(x)&=&P(X_{N_n-k+1:N_n} \le x ) = \sum_{r=m}^\infty  P(X_{N_n-k+1:N_n} \le x , N_n=r) \nonumber 
\\&=& \sum_{r=m}^\infty \sum_{i=0}^{k-1} \binom ri  F^{r-i}(x) (1-F(x))^i P(N_n=r), \; x \in \Real. \label{eqn:gr-max1}
\end{eqnarray}

\begin{thm} If $\;F\in D_l(G)\;$ for some max stable df $\;G\;$ so that $\;F\;$ satisfies (\ref{Introduction_e1}) for some norming constants $\;a_n>0,b_n \in \Real,\;$
then for fixed integer $\;k, 1 \le k \le m,\;$ the limiting distribution $\;\lim_{n \rightarrow \infty} F_{k:N_n}(a_n x + b_n)\;$ is equal to   
\begin{equation} J_k(x) =  k\Set{\frac {1-G(x)}{-\ln G(x)}}-G(x) \sum_{l=1}^{k-1} (k-l) \frac {(-\ln G(x))^{l-1}}{l!}, \; x \in \{y \in \Real: G(y) > 0\},  \label{eqn:dr-max} \end{equation} where  $\; G =\Phi_\alpha \;\mbox{or}\; \Psi_\alpha \;\mbox{or}\;  \Lambda.$ \label{Thm_DU1}
\end{thm}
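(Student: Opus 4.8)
The plan is to start from the exact expression (\ref{eqn:gr-max1}) for $\;F_{k:N_n},\;$ insert the discrete uniform pmf $\;P(N_n=r)=1/n\;$ for $\;r=m+1,\ldots,m+n,\;$ and read the result as an arithmetic average of fixed-sample-size quantities. Writing $\;p_n=p_n(x)=F(a_nx+b_n)\;$ and $\;\tau=\tau(x)=-\ln G(x),\;$ the hypothesis $\;F\in D_l(G)\;$ supplies the two facts I will use repeatedly: $\;p_n^n\to G(x)\;$ and $\;n(1-p_n)\to\tau\;$ as $\;n\to\infty\;$ (the latter being (\ref{Introduction_e1.2})). Thus
\begin{equation*}
F_{k:N_n}(a_nx+b_n)=\frac 1n\sum_{r=m+1}^{m+n}\sum_{i=0}^{k-1}\binom ri p_n^{\,r-i}(1-p_n)^i,
\end{equation*}
and the whole task is to pass to the limit in this double sum, the fixed lower cutoff $\;m\;$ contributing only a vanishing fraction $\;O(m/n)\;$ that may be ignored.

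The key observation is that the inner sum is a binomial tail, $\;\sum_{i=0}^{k-1}\binom ri p_n^{r-i}(1-p_n)^i=P(\mathrm{Bin}(r,1-p_n)\le k-1).\;$ Reparametrising the outer index by $\;r=\lfloor tn\rfloor\;$ with $\;t\in(0,1],\;$ one has $\;r(1-p_n)\to t\tau,\;$ so by the Poisson approximation to the binomial the inner sum converges to $\;G(x)^t\sum_{i=0}^{k-1}(t\tau)^i/i!.\;$ First I would therefore conjecture, and then prove, that the $\;\frac 1n$-average is a Riemann sum converging to
\begin{equation*}
J_k(x)=\int_0^1 G(x)^t\sum_{i=0}^{k-1}\frac{(t\tau)^i}{i!}\,dt.
\end{equation*}
Equivalently, handling each $\;i\;$ separately, since $\;\binom ri\sim r^i/i!\;$ and $\;(1-p_n)^i\sim(\tau/n)^i\;$ the $\;i$-th summand behaves like $\;\frac{(t\tau)^i}{i!}G(x)^t,\;$ a bounded quantity, so its average over $\;r\;$ tends to $\;\frac{\tau^i}{i!}\int_0^1 t^i G(x)^t\,dt.$

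The hard part is justifying this limit interchange rigorously, i.e.\ turning the heuristic $\;r=\lfloor tn\rfloor\;$ Riemann-sum picture into an honest convergence proof. I would control it by noting that each summand is uniformly bounded in $\;r,\;$ since $\;\binom ri(1-p_n)^i\le (r(1-p_n))^i/i!\;$ and $\;r(1-p_n)\le(m+n)(1-p_n)\;$ stays bounded because $\;n(1-p_n)\;$ converges, while $\;p_n^{\,r}=(p_n^n)^{r/n}\;$ converges to $\;G(x)^{r/n}\;$ locally uniformly in the ratio $\;r/n\in(0,1].\;$ A sandwiching argument using the monotonicity of $\;r\mapsto p_n^{\,r}\;$ (or dominated convergence for the sum) then shows the $\;\frac 1n$-average converges to the stated integral. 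The case $\;G(x)=0\;$ is immediate, since then $\;p_n^n\to 0\;$ forces the whole expression to $\;0.$

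It remains to evaluate the integral and match it to (\ref{eqn:dr-max}). The plan is to substitute $\;s=t\tau\;$ and interchange sum and integral, reducing everything to the incomplete-gamma identity $\;\int_0^\tau \frac{s^i e^{-s}}{i!}\,ds=1-e^{-\tau}\sum_{j=0}^{i}\frac{\tau^j}{j!}\;$ (obtained by repeated integration by parts). This gives
\begin{equation*}
J_k(x)=\frac 1\tau\sum_{i=0}^{k-1}\Set{1-e^{-\tau}\sum_{j=0}^{i}\frac{\tau^j}{j!}}=\frac k\tau-\frac{e^{-\tau}}\tau\sum_{j=0}^{k-1}(k-j)\frac{\tau^j}{j!},
\end{equation*}
after swapping the order of the double sum via $\;\sum_{i=0}^{k-1}\sum_{j=0}^{i}=\sum_{j=0}^{k-1}(k-j).\;$ Splitting off the $\;j=0\;$ term turns the first two contributions into $\;k(1-e^{-\tau})/\tau=k(1-G(x))/(-\ln G(x))\;$ and leaves $\;-e^{-\tau}\sum_{l=1}^{k-1}(k-l)\tau^{l-1}/l!=-G(x)\sum_{l=1}^{k-1}(k-l)(-\ln G(x))^{l-1}/l!,\;$ which is exactly (\ref{eqn:dr-max}), completing the proof.
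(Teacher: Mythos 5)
Your proof is correct, and it takes a genuinely different route from the paper's. The paper's proof is an exact finite-$n$ computation: it sums the $i=0$ term of (\ref{eqn:gr-max1}) as the geometric expression $\frac{F^{m+1}(x)-F^{m+n+1}(x)}{n(1-F(x))}$, converts $\binom ri F^{r-i}(x)$ into the derivative $D^iF^r(x)$ with respect to $F(x)$, applies its Lemma \ref{lem:lr}(a) on derivatives of $h(x)/(1-x)$ to get a closed form for $F_{k:N_n}$, and then passes to the limit term by term using $F^{m+n+1}(a_nx+b_n)\to G(x)$ and $n(1-F(a_nx+b_n))\to-\ln G(x)$; no integral ever appears. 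You instead read the inner sum probabilistically as a binomial tail, pass to the limit by Poisson approximation plus a Riemann-sum argument, and land on the integral representation
\begin{equation*}
J_k(x)=\int_0^1 G^t(x)\sum_{i=0}^{k-1}\frac{(t\tau)^i}{i!}\,dt,\qquad \tau=-\ln G(x),
\end{equation*}
which you evaluate via the incomplete-gamma identity. I checked the interchange $\sum_{i=0}^{k-1}\sum_{j=0}^{i}=\sum_{j=0}^{k-1}(k-j)$ and the final split of the $j=0$ term: they reproduce (\ref{eqn:dr-max}) exactly, and the result agrees with the paper's formula at $k=1,2$. Your justification of the limit interchange is sound, and can even be simplified: the whole inner sum is a binomial probability, hence bounded by $1$, so dominated convergence applies directly to the step functions $t\mapsto P(\mathrm{Bin}(\lfloor tn\rfloor+m,\,1-p_n)\le k-1)$ without your term-by-term bound $\binom ri(1-p_n)^i\le (r(1-p_n))^i/i!$. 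What each approach buys: the paper's derivative-operator computation is elementary and exact at every $n$, consistent with its stated aim of elementary proofs; your route is conceptually more transparent, since it exhibits $J_k$ as the uniform mixture $\sum_{i=0}^{k-1}\frac{\tau^i}{i!}\int_0^1 t^i G^t(x)\,dt$ — precisely the special case $\tau\sim U(0,1)$ of the paper's own general random-index result in Subsection 4.4 (reflecting $N_n/n\stackrel{d}{\rightarrow}U(0,1)$) — and it generalizes immediately to any sample size $N_n$ with $N_n/n$ converging to a positive random variable.
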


\begin{defn} \label{Def-Fk} For any df $\;F,\;$ and fixed integer $\;k \ge 1,\;$ we define $U_k(x)=k\Set{\frac {1-F(x)}{-\ln F(x)}}-F(x) \sum_{l=1}^{k-1} (k-l) \frac {(-\ln F(x))^{l-1}}{l!},\;$ $x \in \{y: F(y)>0\}.\;$ \end{defn}

\begin{thm} If rv $\;X\;$ has absolutely continuous df $\;F\;$ with pdf $\;f,\;$ $\;k\;$ is a fixed positive integer, and $\;U_1(x)=\dfrac {1-F(x)}{-\ln F(x)},\;$ $x \in \{y: F(y) > 0\},\;$ then $\;U_1\;$ is a df with $\;r(U_1) = r(F),\;$ pdf $\;u_1(x)=\dfrac {f(x)}{F(x)}\dfrac {U_1(x)-F(x)}{-\ln F(x)}=\dfrac {f(x)}{F(x)}\Set{\dfrac {1-F(x)+F(x) \ln F(x)}{(-\ln F(x))^2}},\; x \in \{y \in \Real: F(y) > 0\};\;$ and $\;\lim_{x \rightarrow r(F)} \dfrac{1-U_1(x)}{1-F(x)} = \dfrac 12\;$ so that $\;U_1\;$ is tail equivalent to $\;F.$ 
 \label{Thm-FkW}
\end{thm}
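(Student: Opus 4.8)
The plan is to read every assertion off the single auxiliary function $\;\phi(t)=1-F(x)+F(x)\ln F(x)\;$ evaluated at $\;t=F(x),\;$ i.e. $\;\phi(t)=1-t+t\ln t,\;$ $\;t\in(0,1),\;$ which will simultaneously control the sign of the density, the monotonicity of $\;U_1,\;$ and (through a $\;t\to1\;$ expansion) the tail ratio. First I would differentiate $\;U_1(x)=(1-F(x))/(-\ln F(x))\;$ directly by the quotient rule, as in Theorem $\ref{thm:r-max-df}$. Writing $\;A(x)=1-F(x)\;$ and $\;B(x)=-\ln F(x),\;$ so that $\;A'(x)=-f(x)\;$ and $\;B'(x)=-f(x)/F(x),\;$ the derivative collects into
\[ U_1'(x)=\frac{A'B-AB'}{B^2}=\frac{f(x)}{F(x)}\cdot\frac{1-F(x)+F(x)\ln F(x)}{(-\ln F(x))^2}=\frac{f(x)}{F(x)}\cdot\frac{\phi(F(x))}{(-\ln F(x))^2}, \]
which is exactly the claimed $\;u_1.\;$ The equivalent first form is then immediate from the identity $\;U_1(x)-F(x)=\phi(F(x))/(-\ln F(x)),\;$ since dividing this by $\;-\ln F(x)\;$ and multiplying by $\;f(x)/F(x)\;$ reproduces the same expression.

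Next I would verify that $\;U_1\;$ is a df. The crucial step is positivity of the density, equivalently $\;\phi(t)>0\;$ on $\;(0,1).\;$ Because $\;\phi(1)=0\;$ and $\;\phi'(t)=\ln t<0\;$ for $\;t\in(0,1),\;$ the function $\;\phi\;$ is strictly decreasing with $\;\phi(1)=0,\;$ hence $\;\phi(t)>0\;$ there; combined with $\;f\ge0\;$ this gives $\;u_1\ge0,\;$ so $\;U_1\;$ is non-decreasing. Continuity of $\;U_1\;$ follows from continuity of $\;F.\;$ For the boundary values I would use the asymptotic relations noted at the end of Section 1: as $\;x\to l(F)\;$ we have $\;F(x)\to0,\;$ so $\;-\ln F(x)\to\infty\;$ and $\;U_1(x)\to0,\;$ while as $\;x\to r(F)\;$ we have $\;F(x)\to1\;$ and $\;-\ln F(x)\sim1-F(x),\;$ giving $\;U_1(x)\to1.\;$ Thus $\;U_1\;$ is a continuous df with the stated pdf.

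To pin down the right extremity I would invoke the elementary inequality $\;-\ln t>1-t\;$ for $\;t\in(0,1)\;$ (equality only at $\;t=1\;$). For every $\;x<r(F)\;$ one has $\;F(x)<1,\;$ whence $\;U_1(x)=(1-F(x))/(-\ln F(x))<1;\;$ together with $\;U_1(x)\to1\;$ as $\;x\to r(F)\;$ this yields $\;r(U_1)=\sup\{x:U_1(x)<1\}=r(F).$

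Finally, for the tail ratio I would set $\;t=F(x)\;$ and compute
\[ \frac{1-U_1(x)}{1-F(x)}=\frac{-\ln F(x)-(1-F(x))}{(1-F(x))(-\ln F(x))}=\frac{t-1-\ln t}{(1-t)(-\ln t)}, \]
and then let $\;t\to1^-.\;$ Using $\;-\ln t\sim1-t\;$ in the denominator together with the expansion $\;t-1-\ln t\sim\tfrac12(1-t)^2\;$ (from $\;\ln t=(t-1)-\tfrac12(t-1)^2+\cdots\;$) in the numerator gives the limit $\;1/2,\;$ so $\;U_1\;$ is tail equivalent to $\;F.\;$ The only genuinely delicate point is the sign analysis $\;\phi(t)>0\;$ underlying monotonicity of $\;U_1\;$ and non-negativity of $\;u_1;\;$ everything else is a direct differentiation or a standard $\;t\to1\;$ asymptotic, both elementary in the spirit of the paper.
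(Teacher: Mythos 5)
Your proposal is correct and follows essentially the same route as the paper's proof: quotient-rule differentiation yielding the stated $u_1$, an elementary inequality guaranteeing $u_1\ge 0$ (your $\phi(t)=1-t+t\ln t>0$ via $\phi'(t)=\ln t<0$ and $\phi(1)=0$ is equivalent to the paper's reduction to $e^y\ge 1+y$ with $y=1/F(x)-1$), the strict inequality $-\ln t>1-t$ on $(0,1)$ to force $r(U_1)=r(F)$ (the paper phrases this as a Taylor-series contradiction), and the limit $1/2$ for the tail ratio. Your only substantive deviation is in the last step, where you substitute $t=F(x)$ and expand $t-1-\ln t\sim\tfrac12(1-t)^2$ directly, whereas the paper applies L'H\^opital in $x$ via $\lim\dfrac{1-U_1(x)}{1-F(x)}=\lim\dfrac{U_1'(x)}{f(x)}$; your version is marginally more robust since it uses only $F(x)\to 1$ and not the density, but both are elementary and reach the same conclusion.
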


\begin{rem} $U_1\;$ in the above theorem was derived first and shown to be a df in Ravi (2006) and this and the next result generalize the results obtained in Ravi (2006). 
\end{rem}

\begin{thm} Let rv $\;X\;$ have absolutely continuous df $\;F\;$ with pdf $\;f\;$ and $\;k\;$ be a fixed positive integer. Then for $\;U_k\;$ as in Definition \ref{Def-Fk} and $\;H_k(x) = 1 - (1-F(x))^k, x \in \Real,\;$ the following results are true.
\begin{enumerate}
\item[(a)] $U_k\;$ is a df with $\;r(U_k) = r(F),\;$ pdf $\;u_k(x)=\dfrac {kf(x)}{(-\ln F(x))^2}\Set{\dfrac 1{F(x)} - \sum_{l=0}^k  \dfrac {(-\ln F(x))^l}{l!}},\;$ $x \in \{y \in \Real: F(y) > 0\}.$
\item[(b)] $\;\lim_{x \rightarrow r(F)} \dfrac{1-U_k(x)}{(1-F(x))^k} = \dfrac{1}{(k+1)!},\;$ so that $\;U_k\;$ is tail equivalent to $\;H_k.$ 
\item[(c)]  If $\;F\in D_l(\Phi_\alpha),\;$ then $\;r(U_k)=r(F)=\infty,\;$ $\;U_k \in D_l(\Phi_{k\alpha})\;$ so that (\ref{Introduction_e1}) holds with $\;F = U_k, G = \Phi_{k\alpha},\;$ $\;a_n =F^- (1 - ((k+1)!/n)^{1/k}), b_n=0.$
\item[(d)] If $\;F\in D_l(\Psi_\alpha)\;$ then $\;r(U_k)=r(F)<\infty,\;$  $\;U_k \in D_l(\Psi_{k\alpha})\;$ so that (\ref{Introduction_e1}) holds with $\;F = U_k, G = \Psi_{k\alpha},\;$ $\;a_n =r(F) - F^-(1 - ((k+1)!/n)^{1/k}), b_n=r(F).$
\item[(e)] If $\;F\in D_l(\Lambda),$ $\;a_n = v(b_n)\;$ and
$\;b_n = F^{-}\left(1 - \frac{1}{n}\right) \;$ as in (c) of Theorem $\ref{T-l-max}$ so that (\ref{Introduction_e1}) holds with $G = \Lambda,$ then $\;r(U_k)=r(F),\;$ $\;U_k  \in D_l(\Lambda)\;$ so that (\ref{Introduction_e1}) holds with $\;F = U_k, G = \Lambda,\;$ $\;a_n = \dfrac{v(b_n)}{k}, b_n = U_k^-( 1 - 1/n).$ 
\end{enumerate} \label{Thm-FkDU}
\end{thm}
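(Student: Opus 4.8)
The plan is to prove (a) first, by establishing a recurrence that links $U_{k+1}$ to $U_k$ and to the previously studied df $F_{k+1}$, then to deduce (b) directly from the resulting density formula, and finally to obtain (c)--(e) from (b) together with Theorem \ref{lem:1} via the tail-equivalence principle for max-domains of attraction. Throughout I write $g=g(x)=-\ln F(x)$, so that $F=e^{-g}$ and $1/F=e^{g}=\sum_{l\ge 0}g^{l}/l!$ for $0<F<1$, and $g\to 0^{+}$ as $x\to r(F)$.

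For (a) the crux is the algebraic identity
\begin{equation}
U_{k+1}(x)=U_k(x)+\frac{1-F_{k+1}(x)}{-\ln F(x)},\qquad x\in\{y:F(y)>0\},
\label{eqn:Urec}
\end{equation}
where $F_{k+1}(x)=F(x)\sum_{i=0}^{k}(-\ln F(x))^{i}/i!$. To get (\ref{eqn:Urec}) I would subtract the defining expressions of $U_{k+1}$ and $U_k$: the two finite sums differ by $\sum_{l=1}^{k}g^{l-1}/l!$, and combining this with the difference $(1-F)/g$ of the leading terms collapses, after writing $\sum_{l=1}^{k}g^{l-1}/l!=g^{-1}(\sum_{l=0}^{k}g^{l}/l!-1)$, to $g^{-1}(1-F\sum_{l=0}^{k}g^{l}/l!)=(1-F_{k+1})/g$. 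With (\ref{eqn:Urec}) in hand I would run induction on $k$: the base case $k=1$ is exactly Theorem \ref{Thm-FkW}, and for the step I differentiate (\ref{eqn:Urec}) using $g'=-f/F$ and the density $f_{k+1}(x)=f(x)(-\ln F(x))^{k}/k!$ from Theorem \ref{thm:r-max-df}. A short computation then turns the inductive hypothesis $u_k=\frac{kf}{g^{2}}\big(\frac1F-\sum_{l=0}^{k}\frac{g^{l}}{l!}\big)$ into the asserted formula for $u_{k+1}$, the only point to check being the coefficient identity $(k+1)g^{k+1}/(k+1)!=g^{k+1}/k!$. Non-negativity of $u_k$, and hence the fact that $U_k$ is a df, is immediate from the Taylor-remainder observation $\frac1F-\sum_{l=0}^{k}\frac{g^{l}}{l!}=\sum_{l\ge k+1}\frac{g^{l}}{l!}>0$; the endpoint behaviour $U_k\to k-(k-1)=1$ as $x\to r(F)$ (the $l=1$ term of the finite sum tends to $k-1$ while the rest vanish) and $U_k\to0$ as $x\to l(F)$ yields $r(U_k)=r(F)$.

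Statement (b) then falls out of the density formula. Since $\sum_{l\ge k+1}g^{l}/l!\sim g^{k+1}/(k+1)!$ as $g\to0^{+}$, part (a) gives $u_k(x)\sim \frac{kf(x)\,g^{k-1}}{(k+1)!}$, whereas the density $H_k'(x)=k(1-F(x))^{k-1}f(x)$ from Theorem \ref{lem:1} satisfies $H_k'(x)\sim k f(x)\,g^{k-1}$ because $1-F\sim g$. Hence $u_k/H_k'\to 1/(k+1)!$ as $x\to r(F)$, and integrating this asymptotic from $x$ to $r(F)$ --- using that, for $x$ near $r(F)$, $\int_x^{r(F)}\big|u_k-\tfrac{1}{(k+1)!}H_k'\big|\le\eps\int_x^{r(F)}H_k'$ with $H_k'>0$ --- gives $\frac{1-U_k(x)}{1-H_k(x)}\to\frac1{(k+1)!}$. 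As $1-H_k=(1-F)^{k}$, this is exactly the claimed tail equivalence $\frac{1-U_k(x)}{(1-F(x))^{k}}\to\frac1{(k+1)!}$.

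Finally, (c)--(e) follow by transporting the conclusions of Theorem \ref{lem:1} for $H_k$ across the tail equivalence of (b): tail-equivalent dfs lie in the same max-domain of attraction, so $F\in D_l(\Phi_\alpha)$, $D_l(\Psi_\alpha)$, $D_l(\Lambda)$ forces $U_k\in D_l(\Phi_{k\alpha})$, $D_l(\Psi_{k\alpha})$, $D_l(\Lambda)$ respectively, with $r(U_k)=r(F)$ from (a). The only remaining work is to pin down the norming constants, which I would read off from (\ref{Introduction_e1.2}): requiring $n(1-U_k(a_nx+b_n))\to -\ln G(x)$ and using $1-U_k\sim (1-F)^{k}/(k+1)!$ replaces the quantile level $1-(1/n)^{1/k}$ appearing for $H_k$ by $1-((k+1)!/n)^{1/k}$ in the Fr\'echet and Weibull cases, while in the Gumbel case the power-$k$ tail scales the auxiliary function by $1/k$, giving $a_n=v(b_n)/k$ with $b_n=U_k^{-}(1-1/n)$. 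I expect the main obstacle to be purely organizational --- verifying the recurrence (\ref{eqn:Urec}) and carrying the density formula cleanly through the inductive step --- since once (a) is in place, (b) is a one-line asymptotic and (c)--(e) are a direct appeal to tail equivalence and Theorem \ref{lem:1}.
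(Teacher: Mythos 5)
Your proposal is correct, and for parts (a) and (b) it takes a genuinely different route from the paper. The paper proves (a) by differentiating the definition of $U_k$ directly and reorganizing the resulting sums into $u_k(x)=\frac{kf(x)}{(-\ln F(x))^2}\bigl(\frac{1}{F(x)}-\sum_{l=0}^{k}\frac{(-\ln F(x))^l}{l!}\bigr)$, whereas you prove the recurrence $U_{k+1}=U_k+\frac{1-F_{k+1}}{-\ln F}$ (which the paper records only as a remark \emph{after} the theorem, in the equivalent form $U_{k+1}=U_k+U_1-F\sum_{l=1}^{k}\frac{(-\ln F)^{l-1}}{l!}$, and never uses in its proof) and then run an induction that imports the known density $f_{k+1}=f(-\ln F)^k/k!$ from Theorem \ref{thm:r-max-df}; both your recurrence and your inductive step check out, including the coefficient identity $(k+1)g^{k+1}/(k+1)!=g^{k+1}/k!$. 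For (b) the paper applies L'Hospital's rule, multiplies and divides strategically, and reduces the limit to $\lim\frac{1-F_k}{(1-F)^k}=\frac1{k!}$ from Theorem \ref{Thm-Fk}, picking up the factor $\frac{1}{k+1}$; your alternative --- read off $u_k\sim\frac{kf\,g^{k-1}}{(k+1)!}$ from the Taylor remainder $\sum_{l\ge k+1}g^l/l!\sim g^{k+1}/(k+1)!$ and integrate the pointwise bound $|u_k-\frac{1}{(k+1)!}H_k'|\le\eps H_k'$ --- is actually more careful than the paper's, since the comparison factor is a function of $F(x)$ alone and the argument remains valid on sets where $f$ vanishes, a point L'Hospital glosses over. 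Parts (c)--(e) in your write-up coincide with the paper's treatment (tail equivalence preserves the max domain, constants read off from (\ref{Introduction_e1.2}), auxiliary function $v/k$ in the Gumbel case). One spot where you are too quick: the endpoint limits $U_k\to1$ at $r(F)$ and $U_k\to0$ at $l(F)$ give only $r(U_k)\le r(F)$; to exclude $r(U_k)<r(F)$ you should add that for $x<r(F)$ one has $1-U_k(x)=\int_x^{r(F)}u_k>0$, because $F(x)<1$ forces $f>0$ on a set of positive measure in $(x,r(F))$ while your Taylor-remainder observation makes the other factor of $u_k$ strictly positive there --- this is exactly the content the paper establishes by its series contradiction with $\xi=-\ln F(r(U_k))$, and it follows immediately from what you have already proved, but it needs to be said.
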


\begin{rem} Note that df $\;U_k\;$ satisfies the recurrence relation $\displaystyle U_{k+1}(x)= U_k(x)+U_1(x)-F(x) \sum_{l=1}^k \frac {(-\ln F(x))^{l-1}}{l!}.$ For, $\;U_{k+1}(x)-U_k(x)=$ 
\begin{eqnarray*}
&=& (k+1)U_1(x)-F(x) \sum_{l=1}^k (k+1-l) \frac {(-\ln F(x))^{l-1}}{l!} - kU_1(x)+F(x) \sum_{l=1}^{k-1} (k-l) \frac {(-\ln F(x))^{l-1}}{l!}
\\&=& U_1(x) - F(x) \sum_{l=1}^{k-1} (k+1-l-k+l) \frac {(-\ln F(x))^{l-1}}{l!} -F(x) \frac {(-\ln F(x))^{k-1}}{k!}
\\&=& U_1(x) - F(x) \sum_{l=1}^k \frac {(-\ln F(x))^{l-1}}{l!}.
\end{eqnarray*}

\end{rem}

\section{\bf Limiting behaviour of $k$th upper order statistic when sample size is random}

In the first subsection we consider the cases when the sample size $\;N_n\;$ follows shifted binomial, Poisson, and logarithmic distributions and show that the limit distribution in all these cases is the same as that in the fixed sample size case. 
After this we consider the cases when the sample size follows shifted geometric and negative binomial and derive non-trivial results.  
\subsection{Binomial, Poisson and Logarithmic $\;k$-th upper order statistic}
\begin{thm} \label{Thm_FkB} 
\begin{itemize}
\item[(a)] Assume that $\;N_n\;$ is a shifted binomial rv with  $\;P(N_n=r)=\binom n{r-m}p_n^{r-m}q_n^{m+n-r},$ $ r=m,m+1,m+2,\ldots,m+n,\;$ for some integer $\;m \geq 1,\;$ with $\;\lim_{n \rightarrow \infty} p_n = 1. \;$ If $\;F\in D_l(G)\;$ for some max stable df $\;G\;$ so that $\;F\;$ satisfies (\ref{Introduction_e1}) for some norming constants $\;a_n>0,b_n \in \Real,\;$
then for fixed integer $\;k, 1 \le k \le m,\;$ the limiting distribution $\;\lim_{n \rightarrow \infty} F_{k:N_n}(a_n x + b_n)\;$ is same as in (\ref{nonrandomlimit}). 
\item[(b)] Assume that $\;N_n\;$ is a shifted Poisson rv with  $\;P(N_n=r)=\dfrac {e^{-\lambda_n} \lambda_n^{r-m}}{(r-m)!},$ $ r=m,m+1,m+2,\ldots,\;$ for some integer $\;m \geq 1,\;$ with $\;\lim_{n \rightarrow \infty} \dfrac{\lambda_n}{n} = 1.$ 
If $\;F\in D_l(G)\;$ for some max stable df $\;G\;$ so that $\;F\;$ satisfies (\ref{Introduction_e1}) for some norming constants $\;a_n>0,b_n \in \Real,\;$
then for fixed integer $\;k, 1 \le k \le m,\;$ the limiting distribution $\;\lim_{n \rightarrow \infty} F_{k:N_n}(a_n x + b_n)\;$ is same as in (\ref{nonrandomlimit}). 
\item[(c)] Assume that $\;N_n\;$ is a shifted logarithmic  rv with  $\;P(N_n=r)=\dfrac1{-\ln (1-\theta_n)} \dfrac{\theta_n^{r-n}}{r-n},$ $ r=n+1,n+2,\ldots,\;$ for some $\;\theta_n, 0 < \theta_n < 1,\;$ with $\;\lim_{n \rightarrow \infty} \dfrac{\theta_n}{-\ln (1-\theta_n)} = 1.$ If $\;F\in D_l(G)\;$ for some max stable df $\;G\;$ so that $\;F\;$ satisfies (\ref{Introduction_e1}) for some norming constants $\;a_n>0,b_n \in \Real,\;$
then for fixed integer $\;k, \,k \ge 1,\;$ the limiting distribution $\;\lim_{n \rightarrow \infty} F_{k:N_n}(a_n x + b_n)\;$ is same as in (\ref{nonrandomlimit}). 
\end{itemize}
\end{thm}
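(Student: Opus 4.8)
The plan is to pass from the mixture (\ref{eqn:gr-max1}) to a closed expression in terms of the probability generating function (pgf) $\Pi_n(s)=E\!\left[s^{N_n}\right]$ of the random sample size, and then to read off the limit from the elementary input supplied by (\ref{Introduction_e1.2}). Writing $s_n=F(a_nx+b_n)$ and $t_n=1-s_n$, membership $F\in D_l(G)$ gives, by (\ref{Introduction_e1.2}), $n\,t_n\to-\ln G(x)=:\tau$ and hence $s_n\to1$ and $s_n^{\,n}=(1-t_n)^n\to e^{-\tau}=G(x)$ for every $x$ with $G(x)>0$; these are the only analytic facts about $F$ that I will use. First I would rewrite $\binom{r}{i}=r^{(i)}/i!$ with $r^{(i)}=r(r-1)\cdots(r-i+1)$ in (\ref{eqn:gr-max1}), interchange the order of summation (the sums converge absolutely), and recognise $\sum_r P(N_n=r)\,r^{(i)}s^{\,r-i}=\Pi_n^{(i)}(s)$. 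This yields the identity
\[ F_{k:N_n}(a_nx+b_n)=\sum_{i=0}^{k-1}\frac{t_n^{\,i}}{i!}\,\Pi_n^{(i)}(s_n), \]
valid for all three sample-size laws (the convention $r^{(i)}=0$ for $r<i$ makes the passage to $\Pi_n^{(i)}$ exact).

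The heart of the argument is then the single limit $t_n^{\,i}\,\Pi_n^{(i)}(s_n)\to\tau^{\,i}G(x)$ for each fixed $i$, for upon summing it gives exactly $G(x)\sum_{i=0}^{k-1}(-\ln G(x))^i/i!$, which is the right-hand side of (\ref{nonrandomlimit}). I would establish this limit uniformly across the three distributions by factoring each shifted pgf as $\Pi_n(s)=A_n(s)\,B_n(s)$, where $B_n$ is a ``heavy'' factor and $A_n$ is ``tame''. Explicitly $B_n(s)=(q_n+p_ns)^n$, $B_n(s)=e^{\lambda_n(s-1)}$ and $B_n(s)=s^{\,n}$ in the binomial, Poisson and logarithmic cases, with $A_n(s)=s^{\,m}$ in the first two and $A_n(s)=\ln(1-\theta_ns)/\ln(1-\theta_n)$ in the third. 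The property I want is $B_n^{(i)}(s_n)\sim n^iG(x)$, while $A_n(s_n)\to1$ and every derivative $A_n^{(j)}(s_n)$ stays bounded.

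Verifying the behaviour of $B_n$ is a short computation using the hypotheses: for the binomial $B_n^{(i)}(s_n)=n^{(i)}p_n^i(1-p_nt_n)^{n-i}\sim n^iG(x)$ since $p_n\to1$ forces $np_nt_n\to\tau$; for the Poisson $B_n^{(i)}(s_n)=\lambda_n^i e^{-\lambda_nt_n}\sim n^iG(x)$ since $\lambda_n/n\to1$; and for the logarithmic $B_n^{(i)}(s_n)=n^{(i)}s_n^{\,n-i}\sim n^iG(x)$ directly from $s_n^{\,n}\to G(x)$. Applying the Leibniz rule to $\Pi_n^{(i)}=\sum_{j=0}^i\binom{i}{j}A_n^{(j)}B_n^{(i-j)}$ and multiplying by $t_n^{\,i}\sim(\tau/n)^i$, the term with $j=0$ tends to $A_n(s_n)\,\tau^iG(x)\to\tau^iG(x)$, while every $j\ge1$ term is $O(n^{-j})$ because $B_n^{(i-j)}(s_n)=O(n^{i-j})$ and $A_n^{(j)}(s_n)=O(1)$; hence they all vanish and the claimed limit follows, giving $G_k$ as in (\ref{nonrandomlimit}).

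I expect the logarithmic case to be the main obstacle. There the heavy factor $s^{\,n}$ carries the growing exponent (in contrast to the fixed shift $s^{\,m}$ of the other two cases), so the roles of the two factors are interchanged and one must instead control the $n$-dependent derivatives of the tame factor $A_n(s)=\ln(1-\theta_ns)/\ln(1-\theta_n)$. The key observation is that the hypothesis $\theta_n/(-\ln(1-\theta_n))\to1$ forces $\theta_n\to0$, whence $A_n(s_n)\to1$ and, from $A_n^{(l)}(s)=-(l-1)!\,\theta_n^l\big/\big[(1-\theta_ns)^l\ln(1-\theta_n)\big]$ for $l\ge1$, one gets $A_n'(s_n)\to1$ and $A_n^{(l)}(s_n)=O(\theta_n^{\,l-1})\to0$ for $l\ge2$; this is precisely the boundedness needed to kill the off-diagonal Leibniz terms. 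Once these estimates are in place the three parts close simultaneously.
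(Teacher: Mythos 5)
Your proposal is correct: the pgf identity $F_{k:N_n}(a_nx+b_n)=\sum_{i=0}^{k-1}\frac{t_n^i}{i!}\Pi_n^{(i)}(s_n)$ is exact (the convention $r^{(i)}=0$ for $r<i$ handles the lower terms), the three factorizations $\Pi_n=A_nB_n$ are right, and your asymptotics check out, including the one delicate point in the logarithmic case: $\theta_n/(-\ln(1-\theta_n))\to 1$ does force $\theta_n\to 0$, whence $A_n'(s_n)\to 1$ and $A_n^{(l)}(s_n)=O(\theta_n^{l-1})$ for $l\ge 2$, which kills the off-diagonal Leibniz terms. Your route is a unified repackaging of what the paper does case by case: the paper's operator $D^i=\frac{d^i}{d(p_nF(x))^i}$ (respectively with respect to $\lambda_nF(x)$ and $F(x)$) applied to the closed-form sums $F^m(x)(p_nF(x)+q_n)^n$, $F^m(x)e^{-\lambda_n(1-F(x))}$, and $F^n(x)\frac{-\ln(1-\theta_nF(x))}{-\ln(1-\theta_n)}$ is precisely your $\Pi_n^{(i)}(s_n)$ up to a constant rescaling of the differentiation variable, and the paper likewise splits via Leibniz into one surviving term and vanishing remainders. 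What your version buys is a single lemma, $t_n^i\,\Pi_n^{(i)}(s_n)\to(-\ln G(x))^iG(x)$ under the heavy/tame factorization, which proves all three parts simultaneously and extends verbatim to any sample-size law whose pgf factors this way; what the paper's explicit term-by-term expansion buys is a self-contained computation that displays exactly which binomial-coefficient terms vanish and at what rate, at the cost of repeating the argument three times. You correctly identified the logarithmic case as the structurally different one (the growing exponent sits in $s^n$ rather than the mixing factor), which is also where the paper's computation is most involved.
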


\subsection{Geometric $\;k$-th upper order statistic}
Assume that $\;N_n\;$ is a shifted geometric rv with pmf $\;P(N_n=r)=p_nq_n^{r-m}, r=m,m+1,m+2,\ldots,\;$ where $\;0< p_n < 1, q_n = 1-p_n\;$ and $\;\lim_{n \rightarrow \infty} np_n = 1.$ 
\begin{thm} If $\;F \in D_l(G)$ for some max stable law $\;G,\;$ then for fixed integer $\;k, 1 \le k \le m,\;$ the limiting distribution $\;\lim_{n \rightarrow \infty} F_{k:N_n}(a_n x + b_n)\;$ is equal to  
\begin{equation} L_k(x)=1-\left(\frac{-\ln G(x)}{1-\ln G(x)}\right)^k, \; x \in \{y \in \Real: G(y) > 0\}. \label{eqn:gr-max} \end{equation}  \label{Thm_FkG}
\end{thm}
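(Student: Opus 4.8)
The plan is to start from the exact expression (\ref{eqn:gr-max1}) for $\;F_{k:N_n},\;$ substitute the shifted geometric pmf $\;P(N_n=r)=p_nq_n^{r-m},\;$ and reduce the double sum to a single power series in $\;z=F(a_nx+b_n)q_n\;$ that can be summed in closed form \emph{before} passing to the limit. Writing $\;F_n=F(a_nx+b_n)\;$ and $\;\bar F_n=1-F_n,\;$ and pulling the finite $\;i$-sum outside the $\;r$-sum (legitimate since only $\;k\;$ terms occur and each geometric-type series converges absolutely because $\;z<1$), I obtain
\[
F_{k:N_n}(a_nx+b_n)=\sum_{i=0}^{k-1}\frac{\bar F_n^{\,i}}{i!}\,p_n\sum_{r=m}^\infty\frac{r!}{(r-i)!}\,F_n^{\,r-i}q_n^{\,r-m}.
\]
Because $\;1\le k\le m\;$ forces $\;i\le k-1<m,\;$ every falling factorial in the inner sum is well defined.

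The key computational step is to evaluate the inner sum. Factoring out $\;F_n^{-i}q_n^{-m}\;$ turns it into $\;\sum_{r=m}^\infty\frac{r!}{(r-i)!}z^r,\;$ and the generating-function identity $\;\sum_{r=i}^\infty\frac{r!}{(r-i)!}z^r=i!\,z^i(1-z)^{-(i+1)}\;$ (obtained by differentiating the geometric series $\;i\;$ times) gives
\[
\sum_{r=m}^\infty\frac{r!}{(r-i)!}z^r=\frac{i!\,z^i}{(1-z)^{i+1}}-\sum_{r=i}^{m-1}\frac{r!}{(r-i)!}z^r,
\]
where the correction is a fixed finite sum of terms that stay bounded as $\;z\to1.\;$ Next I would record the three limits that drive everything: relation (\ref{Introduction_e1.2}) yields $\;n\bar F_n\to\tau:=-\ln G(x)\;$ and hence $\;F_n\to1;\;$ the hypothesis $\;np_n\to1\;$ gives $\;p_n\to0,\ q_n\to1;\;$ and since $\;1-z=1-F_nq_n=\bar F_n+p_n-\bar F_np_n,\;$ one gets $\;n(1-z)\to\tau+1=1-\ln G(x).\;$

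Substituting the closed form back, using $\;F_n^{-i}z^i=q_n^{\,i},\;$ the $\;i$-th summand splits into a main part $\;\bar F_n^{\,i}p_n\,q_n^{\,i-m}(1-z)^{-(i+1)}\;$ plus a piece proportional to $\;p_n\bar F_n^{\,i}\;$ from the finite correction. The correction contributes $\;O(p_n\bar F_n^{\,i})=O(n^{-(i+1)})\to0,\;$ while the main part equals $\;(n\bar F_n)^i(np_n)\,q_n^{\,i-m}\,(n(1-z))^{-(i+1)}\to\tau^i\cdot1\cdot1\cdot(\tau+1)^{-(i+1)}.\;$ Summing the resulting geometric series then gives
\[
\lim_{n\to\infty}F_{k:N_n}(a_nx+b_n)=\frac1{\tau+1}\sum_{i=0}^{k-1}\left(\frac{\tau}{\tau+1}\right)^i=1-\left(\frac{\tau}{\tau+1}\right)^k,
\]
which is exactly $\;L_k(x)\;$ after substituting $\;\tau=-\ln G(x).\;$ The one point needing care is the interchange of limit and summation, but because I evaluate the infinite $\;r$-sum in closed form for each fixed $\;n\;$ first, the only limiting operations left act on the finite $\;i$-sum, so no dominated-convergence argument over the infinite index is required. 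Thus the main obstacle is simply the bookkeeping showing that the finite correction term is asymptotically negligible once multiplied by $\;p_n\bar F_n^{\,i},\;$ together with the uniform tracking of the powers of $\;n\;$ in the main part.
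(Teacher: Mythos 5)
Your proposal is correct and takes essentially the same route as the paper's proof: both evaluate the inner sum in closed form through the identity $\sum_{r\ge i}\frac{r!}{(r-i)!}z^r=\frac{i!\,z^i}{(1-z)^{i+1}}$ with $z=q_nF(a_nx+b_n)$ (the paper realizes this as the operator $D^i=\frac{d^i}{d(q_nF(x))^i}$ applied to $\frac{(q_nF(x))^m}{1-q_nF(x)}$ via Lemma \ref{lem:lr}), then pass to the limit using $np_n\to 1$, $n(1-F(a_nx+b_n))\to -\ln G(x)$, $n(1-q_nF(a_nx+b_n))\to 1-\ln G(x)$, and finish with the same geometric summation yielding $1-\left(\frac{-\ln G(x)}{1-\ln G(x)}\right)^k$. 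The only difference is bookkeeping for the truncation at $r=m$: the paper's Leibniz-type expansion produces terms indexed by $l=0,\dots,i$ of which only $l=0$ survives in the limit, while you subtract the finite initial segment $r=i,\dots,m-1$ and show it is $O\left(p_n\,(1-F(a_nx+b_n))^i\right)$ — both isolate the identical dominant term $p_n\,q_n^{\,i-m}\,(1-F(a_nx+b_n))^i\,(1-q_nF(a_nx+b_n))^{-(i+1)}$.
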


\begin{rem} Note that $\;L_k(x)= \begin{cases} 1-\left(\frac 1{1+x^\alpha}\right)^k & \;\text{if } G(x)=\Phi_\alpha(x), \\ 1-\left(\dfrac {(-x)^\alpha}{1+(-x)^\alpha}\right)^k & \; \text{if } G(x)=\Psi_\alpha(x), \;\;\;\; \\ 1-\left(\frac {e^{-x}}{1+e^{-x}}\right)^k & \;\text{if } G(x)=\Lambda(x), \end{cases}\;$ so that the first two are Burr distributions of XII kind which is discussed later, and the last is the logistic distribution. 
\end{rem}
Limit distributions of extremes of a random number of random variables has been obtained for three extreme value distributions $\Phi_\alpha,\Psi_\alpha,\Lambda$ in  Vasantalakshmi, M.S.,(2010) using result of Barakat et al(2002). The Burr-family is also introduced in the same.  In the thesis, only case when $N_n$ follow geometric law was considered. In this paper an attempt is made to study the tail behaviour of $k-$th upper order statistic when the df $F\in D_l(G)$ and the results is considered for different distributions of $N_n$. \\ 

As we did in Definition \ref{Def-Fk}, we now define a new function and show that it is a df and study its properties similar to those in Theorem \ref{Thm-Fk}.

\begin{defn}  For any df $\;F,\;$ and fixed integer $\;k \ge 1,\;$ we define $\;R_k(x)=1-\left( \dfrac {-\ln F(x)}{1-\ln F(x)}\right)^k,\;$ $x \in \{y: F(y)>0\}.\;$ \label{Def-Fk2} \end{defn}

\begin{thm} If rv $\;X\;$ has absolutely continuous df $\;F\;$ with pdf $\;f\;$ $\;k\;$ is a positive integer and $\; R_k\;$ is as defined above, then the following results are true.
\begin{enumerate}
\item[(a)] $R_k\;$ is a df with pdf $\;r_k(x)=\dfrac{kf(x) (-\ln F(x))^{k-1}}{F(x) (1-\ln F(x))^{k+1}},\;$ $x \in \{y \in \Real: F(y) > 0\},$ $\;r(R_k) = r(F),\;$ and $\lim_{x \rightarrow r(F)} \dfrac{1-R_k(x)}{(1-F(x))^k} = 1,\;$ so that $\;R_k\;$ is tail equivalent to $\;H_k.$ 
\item[(b)]  If $\;F\in D_l(\Phi_\alpha),\;$ then $\;r(R_k)=r(F)=\infty,\;$  and $\;R_k \in D_l(\Phi_{k\alpha})\;$ so that (\ref{Introduction_e1}) holds with $\;F = R_k, G = \Phi_{k\alpha},$  $ \;a_n =F^- (1 - (1/n)^{1/k}), b_n=0.$ 
\item[(c)] If $\;F\in D_l(\Psi_\alpha)\;$ then $\;r(R_k)=r(F)<\infty,\;$   and 
$\;R_k \in D_l(\Psi_{k\alpha})\;$ so that (\ref{Introduction_e1}) holds with $\;F = R_k, G = \Psi_{k\alpha},\;$ $a_n =r(F) - F^-(1 - (1/n)^{1/k}), b_n=r(F).$ 
\item[(d)] If $\;F\in D_l(\Lambda),\;$ $a_n = v(b_n)\;$ and
$\;b_n = F^{-}\left(1 - \dfrac{1}{n}\right) \;$ as in (c) of Theorem $\ref{T-l-max}$ so that (\ref{Introduction_e1}) holds with $G = \Lambda,$ then $\;r(R_k)=r(F),\;$ and  
$\;R_k  \in D_l(\Lambda)\;$ so that (\ref{Introduction_e1}) holds with $\;F = R_k, G = \Lambda,\;$ $a_n = \dfrac{v(b_n)}{k}, b_n = R_k^-( 1 - 1/n).$ 
\end{enumerate} \label{Thm-FkG2}
\end{thm}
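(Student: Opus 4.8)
The plan is to prove (a) by direct computation and then deduce (b)--(d) from the tail equivalence established in (a) together with Theorem~\ref{lem:1}, exactly as was done earlier for $\;F_k\;$ and $\;U_k.\;$ First I would set $\;u = u(x) = -\ln F(x),\;$ so that $\;1 - \ln F(x) = 1 + u\;$ and $\;R_k(x) = 1 - \Set{\frac{u}{1+u}}^k.\;$ Since $\;t \mapsto t/(1+t)\;$ is strictly increasing on $\;(0,\infty)\;$ while $\;u\;$ is a strictly decreasing function of $\;F(x),\;$ the composite $\;x \mapsto R_k(x)\;$ is non-decreasing. As $\;x \to l(F)\;$ we have $\;F(x) \to 0,\;$ hence $\;u \to \infty\;$ and $\;R_k(x) \to 0;\;$ as $\;x \to r(F)\;$ we have $\;F(x) \to 1,\;$ hence $\;u \to 0\;$ and $\;R_k(x) \to 1.\;$ Thus $\;R_k\;$ is a df, and since $\;R_k(x) < 1\;$ precisely when $\;u > 0,\;$ i.e. when $\;F(x) < 1,\;$ we obtain $\;r(R_k) = r(F).\;$ Differentiating, with $\;\frac{d}{dx}\Set{\frac{u}{1+u}} = \frac{u'}{(1+u)^2}\;$ and $\;u' = -f(x)/F(x),\;$ gives the stated density $\;r_k(x) = \dfrac{kf(x)(-\ln F(x))^{k-1}}{F(x)(1-\ln F(x))^{k+1}},\;$ which is non-negative since $\;f(x) \ge 0,\; 0 < F(x) < 1,\;$ and $\;-\ln F(x) > 0.$

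For the tail equivalence I would write $\;1 - R_k(x) = \Set{\frac{-\ln F(x)}{1-\ln F(x)}}^k\;$ and let $\;x \to r(F).\;$ Using $\;-\ln F(x) \sim 1 - F(x)\;$ recorded in Section 1 and $\;1 - \ln F(x) \to 1,\;$ the numerator is asymptotic to $\;(1-F(x))^k\;$ and the denominator tends to $\;1,\;$ so that $\;\lim_{x \to r(F)} \frac{1 - R_k(x)}{(1 - F(x))^k} = 1.\;$ Since $\;1 - H_k(x) = (1 - F(x))^k,\;$ this is exactly tail equivalence of $\;R_k\;$ and $\;H_k\;$ with limiting ratio $\;1,\;$ completing (a).

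Parts (b)--(d) then require essentially no new computation. By Theorem~\ref{lem:1}, $\;H_k\;$ belongs to $\;D_l(\Phi_{k\alpha}),\; D_l(\Psi_{k\alpha}),\;$ or $\;D_l(\Lambda)\;$ according as $\;F \in D_l(\Phi_\alpha),\; D_l(\Psi_\alpha),\;$ or $\;D_l(\Lambda),\;$ with the norming constants displayed there. Because tail equivalence preserves the max domain of attraction (the criterion is recorded in Section 8) and the limiting tail ratio in (a) is exactly $\;1,\;$ no rescaling of the norming constants is required: $\;R_k\;$ inherits both the domain and the constants of $\;H_k.\;$ This yields $\;r(R_k) = r(H_k) = r(F)\;$ in each case, together with $\;a_n = F^-(1-(1/n)^{1/k})\;$ in (b), $\;a_n = r(F) - F^-(1-(1/n)^{1/k})\;$ in (c), and $\;a_n = v(b_n)/k\;$ in (d), precisely as furnished by Theorem~\ref{lem:1}.

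The step needing the most care is the Gumbel case (d), where the centering is taken as $\;b_n = R_k^-(1 - 1/n)\;$ rather than $\;H_k^-(1 - 1/n).\;$ Here I would check that tail equivalence with ratio $\;1\;$ forces these two quantiles to differ by $\;o(a_n),\;$ which is what permits the same auxiliary-function scaling to be used for $\;R_k;\;$ the factor $\;1/k\;$ appears because the tail $\;(1-F)^k\;$ multiplies the hazard rate of $\;F\;$ by $\;k,\;$ hence divides its auxiliary function $\;v\;$ by $\;k.\;$ Apart from this bookkeeping, (b)--(d) are immediate applications of the tail equivalence criterion, so the substantive work is confined to part (a).
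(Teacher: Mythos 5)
Your proposal is correct and follows essentially the same route as the paper: direct verification that $R_k$ is a df with the stated pdf and right extremity, the same one-line computation $\lim_{x\to r(F)}\dfrac{1-R_k(x)}{(1-F(x))^k}=\lim_{x\to r(F)}\left(\dfrac{-\ln F(x)}{1-F(x)}\right)^k\dfrac{1}{(1-\ln F(x))^k}=1$, and then transfer of (b)--(d) by tail equivalence (your substitution $u=-\ln F(x)$ merely streamlines the paper's chain of inequalities for monotonicity). Your citation of Theorem~\ref{lem:1} for the norming constants is, if anything, the more apt reference, since the limiting tail ratio here is exactly $1$ and the constants in the statement coincide with those for $H_k$ rather than the $(k!/n)^{1/k}$-type constants of Theorem~\ref{Thm-Fk} which the paper cites.
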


\begin{rem} Burr (1942) proposed twelve explicit forms of dfs which have since come to be known as the Burr system of distributions. These have been studied quite extensively in the literature. A number of well-known distributions such as the uniform, Rayleigh, logistic, and log-logistic are special cases of Burr dfs. A df $\;W\;$ is  said to belong to the Burr family if it satisfies the differential equation  
\begin{equation} \dfrac {dW(x)}{dx}=W(x) (1-W(x)) h(x,W(x))  \label{eqn:burr} \end{equation}
where $\;h(x,W(x))\;$ is a non-negative function for $\;x\;$ for which the function is increasing. One of the forms of $\;h(x,W(x))\;$ is $\;h(x,W(x))=\dfrac {h_1(x)}{W(x)}\;$ where $\;h_1(x)\;$ is a non-negative function. Then (\ref{eqn:burr}) takes the form $\;\dfrac {dW(x)}{dx}=(1-W(x)) h_1(x).\;$ We now show that $\;R_k\;$ is a member of the Burr family of dfs. 
\end{rem}

\begin{thm} The dfs $R_k$ in Definition \ref{Def-Fk2} belong to the Burr family.  \label{Thm-FkG3} \end{thm}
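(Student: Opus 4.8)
The plan is to verify directly that $R_k$ satisfies the Burr differential equation (\ref{eqn:burr}) in the simplified form $\dfrac{dW(x)}{dx} = (1 - W(x)) h_1(x)$ with a non-negative function $h_1$. First I would recall from part (a) of Theorem \ref{Thm-FkG2} that $R_k$ is absolutely continuous with pdf
\[
r_k(x) = \frac{k f(x) (-\ln F(x))^{k-1}}{F(x)(1 - \ln F(x))^{k+1}}, \qquad x \in \{y : F(y) > 0\},
\]
and from Definition \ref{Def-Fk2} that
\[
1 - R_k(x) = \left(\frac{-\ln F(x)}{1 - \ln F(x)}\right)^{k}.
\]

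Then I would form the quotient $r_k(x)/(1 - R_k(x))$. The factor $(-\ln F(x))^{k}$ appearing in $1 - R_k(x)$ cancels all but one power of $-\ln F(x)$ in the numerator of $r_k$, while the factor $(1 - \ln F(x))^{k}$ cancels all but one power of $1 - \ln F(x)$ in the denominator, leaving
\[
h_1(x) := \frac{r_k(x)}{1 - R_k(x)} = \frac{k f(x)}{F(x)\,(-\ln F(x))\,(1 - \ln F(x))}, \qquad x \in \{y : F(y) > 0\}.
\]
It then remains to check the sign of $h_1$: on $\{y : F(y) > 0\}$ we have $0 < F(x) < 1$, hence $-\ln F(x) > 0$ and $1 - \ln F(x) > 1 > 0$, while $f(x) \geq 0$; therefore $h_1(x) \geq 0$. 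This shows that $R_k'(x) = r_k(x) = (1 - R_k(x))\,h_1(x)$, i.e.\ $R_k$ satisfies (\ref{eqn:burr}) with the admissible choice $h(x, R_k(x)) = h_1(x)/R_k(x)$, so $R_k$ belongs to the Burr family.

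I do not expect a genuine obstacle here: the content is a one-line algebraic simplification followed by a sign check. The only points requiring care are the bookkeeping of the exponents of $-\ln F(x)$ and $1 - \ln F(x)$ when cancelling between $r_k$ and $1 - R_k$, and the observation that the domain $\{y : F(y) > 0\}$ forces $-\ln F(x) > 0$ and $1 - \ln F(x) > 0$, which is exactly what guarantees the non-negativity of $h_1$ required by the definition of the Burr family.
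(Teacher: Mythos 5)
Your proposal is correct and is essentially identical to the paper's proof: the paper likewise divides the pdf $r_k$ from Theorem \ref{Thm-FkG2}(a) by $1-R_k(x)=\left(\frac{-\ln F(x)}{1-\ln F(x)}\right)^k$ to obtain $r_k(x)=(1-R_k(x))h_1(x)$ with $h_1(x)=\frac{-kf(x)}{F(x)(1-\ln F(x))\ln F(x)}$, which is the same function as your $h_1$ up to where the sign is placed (since $-\ln F(x)>0$ on $\{y:F(y)>0\}$), followed by the same positivity check.
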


\subsection{Negative Binomial $\;k$-th upper order statistic}
We assume that $\;N_n\;$ is a shifted negative binomial rv with pmf $\;P(N_n=l)=\binom {l-m+r-1}{l-m} p_n^r q_n^{l-m}, r=m,m+1,m+2,\ldots,\;$ where $\;0< p_n < 1, q_n = 1-p_n\;$ and $\;\lim_{n \rightarrow \infty} np_n = 1.$ 
\begin{thm} If $\;F \in D_l(G)$ for some max stable law $\;G,\;$ then for fixed integer $\;k, 1 \le k \le m,\;$ the limiting distribution $\;\lim_{n \rightarrow \infty} F_{k:N_n}(a_n x + b_n)\;$ is equal to  
\begin{equation} S_k(x)=\sum_{l=0}^{k-1} \binom {l+r-1}l \dfrac 1{(1-\ln G(x))^{r}} \left( \dfrac {-\ln G(x)}{1-\ln G(x)}\right)^l, \; x \in \{y \in \Real: G(y) > 0\}. \label{eqn:nbr-max} \end{equation} \label{Thm-FkNB1}
\end{thm}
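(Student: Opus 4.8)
The plan is to express $\;F_{k:N_n}\;$ through the probability generating function (pgf) of $\;N_n\;$ and its derivatives, and then to extract the limit from the three elementary asymptotics $\;n\{1-F(a_nx+b_n)\} \to -\ln G(x)\;$ (which is (\ref{Introduction_e1.2})), $\;np_n \to 1,\;$ and $\;F(a_nx+b_n) \to 1\;$ as $\;n \to \infty.\;$

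First I would establish a pgf form of (\ref{eqn:gr-max1}). Writing $\;\Pi_{N_n}(u)=\sum_{s=m}^\infty u^s P(N_n=s),\;$ $\;0 \le u < 1,\;$ interchanging the two absolutely convergent sums in (\ref{eqn:gr-max1}) (with the outer index renamed $\;s\;$ to avoid clashing with the negative binomial parameter $\;r$) and using the identity $\;\binom si u^{s-i}=\frac 1{i!}\frac{d^i}{du^i}u^s,\;$ one gets
\[ F_{k:N_n}(x)=\sum_{i=0}^{k-1}\frac{(1-F(x))^i}{i!}\,\Pi_{N_n}^{(i)}(F(x)), \]
term-by-term differentiation being legitimate because $\;\Pi_{N_n}\;$ has radius of convergence $\;1/q_n>1\;$ and $\;F(x)<1.\;$ For the shifted negative binomial the pgf is explicit, $\;\Pi_{N_n}(u)=p_n^r\,u^m(1-q_nu)^{-r}.\;$

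Next I would differentiate by Leibniz' rule, using $\;(u^m)^{(j)}=\frac{m!}{(m-j)!}u^{m-j}\;$ (meaningful since $\;j\le i\le k-1\le m-1$) and $\;\left((1-q_nu)^{-r}\right)^{(\ell)}=q_n^\ell\,\frac{(r+\ell-1)!}{(r-1)!}(1-q_nu)^{-r-\ell},\;$ then set $\;u=F_n:=F(a_nx+b_n)\;$ and count powers of $\;n.\;$ The decisive step is the asymptotic
\[ 1-q_nF_n=(1-F_n)+p_nF_n\sim\frac{-\ln G(x)}{n}+\frac 1n=\frac{1-\ln G(x)}{n}, \]
so that $\;(1-q_nF_n)^{-r-\ell}\sim n^{r+\ell}(1-\ln G(x))^{-r-\ell},\;$ while $\;p_n^r\sim n^{-r}\;$ and $\;(1-F_n)^i/i!\sim(-\ln G(x))^i/(i!\,n^i).\;$ Consequently the Leibniz term of $\;(1-F_n)^i\Pi_{N_n}^{(i)}(F_n)/i!\;$ indexed by $\;j\;$ is of order $\;n^{-j},\;$ so only $\;j=0\;$ persists in the limit.

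The only real work is this order bookkeeping and the term-by-term passage to the limit; once carried out, the surviving $\;j=0\;$ contribution for each $\;i\;$ tends to $\;\binom{i+r-1}{i}\frac 1{(1-\ln G(x))^r}\Set{\frac{-\ln G(x)}{1-\ln G(x)}}^i,\;$ and summing over $\;i=0,\ldots,k-1\;$ yields precisely $\;S_k\;$ in (\ref{eqn:nbr-max}). The hypothesis $\;1\le k\le m\;$ enters twice: it keeps $\;m-j>0\;$ in every derivative of $\;u^m\;$ that appears, and, more basically, it guarantees that $\;X_{N_n-k+1:N_n}\;$ is defined for every value of $\;N_n\ge m.\;$ As a check, $\;k=1\;$ gives $\;S_1(x)=(1-\ln G(x))^{-r},\;$ in agreement with evaluating the maximum directly via $\;\Pi_{N_n}(F_n).\;$
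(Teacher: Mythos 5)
Your proposal is correct and is essentially the paper's own argument: the paper likewise recognizes the inner sum over the random index as the $i$-th derivative (with respect to $q_nF(x)$) of $(q_nF(x))^m(1-q_nF(x))^{-r}$, its Lemma \ref{lem:lr}(c) being exactly your Leibniz expansion in closed form, and it passes to the limit via the same asymptotics $np_n\to 1$, $\;n(1-F(a_nx+b_n))\to-\ln G(x)$, $\;n(1-q_nF(a_nx+b_n))\to 1-\ln G(x)$, with only your $j=0$ term (the paper's $l=i$ term) surviving. Your pgf framing is merely a cleaner repackaging of the same computation.
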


As we did in Definition \ref{Def-Fk2}, we now define a new function and show that it is a df and study its properties similar to those in Theorems \ref{Thm-Fk} and \ref{Thm-FkG2}.

\begin{defn}  For any df $\;F,\;$ and fixed integer $\;k \ge 1,\;$ we now define $\;T_k(x)=\sum_{l=0}^{k-1} \binom {l+r-1}l  \dfrac {(-\ln F(x))^l}{(1-\ln F(x))^{l+r}},\;$ $x \in \{y: F(y)>0\}.\;$  \label{Def-Fk3} \end{defn}

The following theorem contains a recurrence relation. 

\begin{thm}
The df $\;T_k\;$ in (\ref{eqn:nbr-max}) satisfies the recurrence relation 
 \begin{equation} T_{k+1}(x)= T_k(x)+  \binom {k+r-1}k \dfrac {(-\ln F(x))^k}{(1-\ln F(x))^{k+r}}, \;\; k\ge 1, x \in \Real. \label{eqn:recdf1} \end{equation}
Further its pdf is given by  
\begin{equation} t_{k+1}(x)= \dfrac 1{B(r,k+1)} \dfrac {f(x)}{F(x)}  \dfrac {(-\ln F(x))^k}{(1-\ln F(x))^{r+k+1}}, \;\; k\ge 1, x\in \Real. \label{eqn:recpdf1} \end{equation}  \label{Thm_FkNB2}
\end{thm}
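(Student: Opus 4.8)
The plan is to treat the two displayed assertions separately: the recurrence (\ref{eqn:recdf1}) follows at once from Definition \ref{Def-Fk3}, while the pdf formula (\ref{eqn:recpdf1}) is proved by induction on $k$, in the same spirit as the proof of Theorem \ref{thm:r-max-df}. For (\ref{eqn:recdf1}), observe that $T_{k+1}$ and $T_k$ are consecutive partial sums of one and the same series; subtracting the defining expression for $T_k$ from that for $T_{k+1}$, all summands with $0 \le l \le k-1$ cancel and only the $l=k$ term $\binom{k+r-1}{k}(-\ln F(x))^k/(1-\ln F(x))^{k+r}$ remains, which is exactly (\ref{eqn:recdf1}). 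This step needs no computation.

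For the pdf, it is convenient to restate the claim as $t_m(x) = \frac{1}{B(r,m)}\frac{f(x)}{F(x)}\frac{(-\ln F(x))^{m-1}}{(1-\ln F(x))^{r+m}}$ for every integer $m \ge 1$, which is (\ref{eqn:recpdf1}) after setting $m=k+1$. The base case $m=1$ is direct: Definition \ref{Def-Fk3} gives $T_1(x) = (1-\ln F(x))^{-r}$, and differentiating with $\frac{d}{dx}(1-\ln F(x)) = -f(x)/F(x)$ yields $t_1(x) = r f(x)/\left(F(x)(1-\ln F(x))^{r+1}\right)$; since $B(r,1)=1/r$, this matches the claimed formula with $m=1$. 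For the inductive step I would differentiate the recurrence (\ref{eqn:recdf1}), obtaining $t_{k+1}=t_k+\binom{k+r-1}{k}\frac{d}{dx}\left[(-\ln F(x))^k/(1-\ln F(x))^{k+r}\right]$, substitute the inductive hypothesis for $t_k$, and carry out the quotient differentiation using the abbreviation $y=-\ln F(x)$, so that $y'=-f(x)/F(x)$ and $1-\ln F(x)=1+y$; a short calculation gives $\frac{d}{dx}[y^k/(1+y)^{k+r}]=y^{k-1}y'(k-ry)/(1+y)^{k+r+1}$.

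The step I expect to be the main obstacle is the algebraic consolidation. After substitution both terms carry the common factor $\frac{f(x)}{F(x)}\frac{y^{k-1}}{(1+y)^{r+k+1}}$, and it remains to verify the scalar identity $\frac{1+y}{B(r,k)}-\binom{k+r-1}{k}(k-ry)=\frac{y}{B(r,k+1)}$. I would prove this by passing to Gamma functions via $\frac{1}{B(r,k)}=\Gamma(r+k)/(\Gamma(r)\Gamma(k))$ and $\binom{k+r-1}{k}=\Gamma(k+r)/(\Gamma(r)\Gamma(k+1))$, together with $\Gamma(k+1)=k\Gamma(k)$ and $\Gamma(r+k+1)=(r+k)\Gamma(r+k)$: the constant terms in the bracket cancel, the terms linear in $y$ combine to $y(k+r)\Gamma(r+k)/(\Gamma(r)\Gamma(k+1))=y/B(r,k+1)$, and matching the common factor produces (\ref{eqn:recpdf1}) with $k+1$ in place of $m$. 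Finally, since $f(x)\ge 0$, $0<F(x)<1$ and $B(r,k+1)>0$, each $t_{k+1}(x)\ge 0$, and one checks $T_k\to 1$ as $x\to r(F)$ and $T_k\to 0$ as $x\to l(F)$, confirming that $T_k$ is a genuine df with the stated pdf.
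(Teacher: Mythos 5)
Your proposal is correct and follows essentially the same route as the paper: the recurrence is read off directly from Definition \ref{Def-Fk3} as the difference of consecutive partial sums, and the pdf formula is established by induction, differentiating the recurrence and consolidating the Beta-function constants (your scalar identity $\frac{1+y}{B(r,k)}-\binom{k+r-1}{k}(k-ry)=\frac{y}{B(r,k+1)}$ checks out via $\Gamma(k+1)=k\Gamma(k)$ and $\Gamma(r+k+1)=(r+k)\Gamma(r+k)$, exactly the cancellation the paper performs term by term). The only cosmetic differences are your substitution $y=-\ln F(x)$ and your base case at $T_1$ rather than the paper's $T_2$, neither of which changes the argument.
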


\begin{thm} Let rv $\;X\;$ have absolutely continuous df $\;F\;$ with pdf $\;f\;$ and $\;k\;$ be a fixed positive integer. Then for $\; T_k\;$ as in Definition \ref{Def-Fk2}, the following results are true.
\begin{enumerate}
\item[(a)] $T_k\;$ is a df with pdf $\;t_k(x)=\dfrac 1{B(r,k)} \dfrac {f(x)}{F(x)}  \dfrac {(-\ln F(x))^{k-1}}{(1-\ln F(x))^{r+k}},\;$ $x \in \{y \in \Real: F(y) > 0\},$ right extremity $\;r(T_k) = r(F),\;$ and 
$\lim_{x \rightarrow r(F)} \dfrac{1-T_k(x)}{(1-F(x))^k} = \dfrac{k}{B(r,k)},\;$ so that $\;T_k\;$ is tail equivalent to $\;H_k.$ 
\item[(b)]  If $\;F\in D_l(\Phi_\alpha),\;$ then $\;r(T_k)=r(F)=\infty,\;$  and
$\;T_k \in D_l(\Phi_{k\alpha})\;$ so that (\ref{Introduction_e1}) holds with $\;F = T_k,\;$ $\;G = \Phi_{k\alpha},\;$ $a_n =F^- (1 - (1/n)^{1/k}), b_n=0.$ 
\item[(c)] If $\;F\in D_l(\Psi_\alpha)\;$ then $\;r(T_k)=r(F)<\infty,\;$   and 
$\;T_k \in D_l(\Psi_{k\alpha})\;$ so that (\ref{Introduction_e1}) holds with $\;F = T_k,\;$ $\;G = \Psi_{k\alpha},\;$  $a_n =r(F) - F^-(1 - (1/n)^{1/k}), b_n=r(F).$ 
\item[(d)] If $\;F\in D_l(\Lambda),$ $\;a_n = v(b_n)\;$ and
$\;b_n = F^{-}\left(1 - \dfrac{1}{n}\right) \;$ as in (c) of Theorem $\ref{T-l-max}$ so that (\ref{Introduction_e1}) holds with $\;F = T_k,\;$ $G = \Lambda,$ then $\;r(T_k)=r(F),\;$ and  
$\;T_k  \in D_l(\Lambda)\;$ so that (\ref{Introduction_e1}) holds with $\;F = T_k,\;$ $\;G = \Lambda,\;$ $a_n = \dfrac{v(b_n)}{k}, b_n = T_k^-( 1 - 1/n).$ 
\end{enumerate} \label{Thm_FkNB3}
\end{thm}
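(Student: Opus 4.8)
The plan is to follow the template already used for Theorems \ref{lem:1}, \ref{Thm-Fk}, and \ref{Thm-FkDU}: first verify that $\;T_k\;$ is a genuine df with the stated pdf and with $\;r(T_k)=r(F)$, then show that $\;T_k\;$ is tail equivalent to $\;H_k=1-(1-F)^k$, and finally deduce the max domains of attraction in (b)--(d) from Theorem \ref{lem:1} together with the classical fact (recalled in Section 8) that tail equivalent dfs share the same max domain of attraction. The one observation that organizes everything is that, on setting $\;p(x)=1/(1-\ln F(x))\;$ and $\;q(x)=(-\ln F(x))/(1-\ln F(x))$, so that $\;p(x)+q(x)=1\;$ and $\;0<q(x)<1\;$ on $\;\set{y:0<F(y)<1}$, the summand $\;\binom{l+r-1}{l}(-\ln F)^l/(1-\ln F)^{l+r}\;$ equals $\;\binom{l+r-1}{l}p^r q^l$; hence $\;T_k(x)=\sum_{l=0}^{k-1}\binom{l+r-1}{l}p(x)^r q(x)^l\;$ is exactly the probability that a negative binomial count with parameters $\;r\;$ and $\;p(x)\;$ does not exceed $\;k-1$.

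For part (a) I would get the pdf by differentiation: the recurrence (\ref{eqn:recdf1}) and the pdf (\ref{eqn:recpdf1}) of Theorem \ref{Thm_FkNB2} give $\;t_k(x)=\frac1{B(r,k)}\frac{f(x)}{F(x)}\frac{(-\ln F(x))^{k-1}}{(1-\ln F(x))^{r+k}}$, which is nonnegative since $\;f\ge0,\;0<F<1,\;-\ln F\ge0,\;$ and $\;1-\ln F>0$; this gives monotonicity of $\;T_k$. The negative binomial reading makes the boundary behaviour transparent: as $\;x\to l(F)\;$ one has $\;p(x)\to0$, so $\;T_k(x)\to0$, while as $\;x\to r(F)\;$ one has $\;q(x)\to0\;$ and $\;p(x)\to1$, so $\;T_k(x)\to1$; right-continuity is inherited from $\;F$. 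Since $\;T_k(x)<1\;$ exactly when $\;F(x)<1$, we obtain $\;r(T_k)=r(F)$.

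The crux of (a) is the tail equivalence limit, which I would extract from the negative binomial tail $\;1-T_k(x)=p(x)^r\sum_{l\ge k}\binom{l+r-1}{l}q(x)^l=p(x)^r q(x)^k\sum_{j\ge0}\binom{j+k+r-1}{j+k}q(x)^j$. As $\;x\to r(F)\;$ we have $\;-\ln F(x)\to0$, hence $\;q(x)\to0,\;p(x)\to1$, and the bracketed series tends to its leading $\;j=0\;$ term $\;\binom{k+r-1}{k}$, so that $\;1-T_k(x)\sim\binom{k+r-1}{k}\,q(x)^k$. Using $\;-\ln F(x)\sim1-F(x)\;$ and $\;1-\ln F(x)\to1\;$ gives $\;q(x)\sim1-F(x)$, whence $\;\lim_{x\to r(F)}(1-T_k(x))/(1-F(x))^k=\binom{k+r-1}{k}$, a finite positive constant; this is precisely the tail equivalence of $\;T_k\;$ with $\;H_k$. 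The only point needing care is that the remainder series be controlled so as to converge to its first term, which is immediate because $\;0<q(x)<1\;$ and the full negative binomial series converges.

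Granting tail equivalence, parts (b)--(d) need no distribution-specific work. By Theorem \ref{lem:1}, $\;H_k\;$ lies in $\;D_l(\Phi_{k\alpha}),\;D_l(\Psi_{k\alpha}),\;$ or $\;D_l(\Lambda)\;$ according as $\;F\in D_l(\Phi_\alpha),\;D_l(\Psi_\alpha),\;$ or $\;D_l(\Lambda)$, with the norming constants stated there; since tail equivalence preserves the max domain of attraction, the same memberships hold for $\;T_k$, and the norming constants are read off exactly as in the proofs of Theorems \ref{Thm-Fk} and \ref{Thm-FkDU}, with the factor $\;1/k!\;$ there replaced by the present tail constant. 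I expect the leading-order extraction from the negative binomial tail to be the only genuinely substantive step; everything else is bookkeeping resting on the negative binomial identity and on Theorem \ref{lem:1}.
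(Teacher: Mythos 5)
Your proposal is correct, and while its overall architecture (df verification, $r(T_k)=r(F)$, tail equivalence with $H_k$, then reduction of (b)--(d) to Theorems \ref{lem:1} and \ref{Thm-Fk}) matches the paper's, your key computation goes by a genuinely different route. The paper cites Theorem \ref{Thm_FkNB2} for the pdf, proves $r(T_k)=r(F)$ by contradiction against the full series $\sum_{l=0}^{\infty}\binom{l+r-1}{l}\frac{(-\ln F)^l}{(1-\ln F)^{l+r}}=1$, and evaluates $\lim_{x\to r(F)}\frac{1-T_k(x)}{(1-F(x))^k}$ by L'Hospital's rule applied to $\frac{-t_k(x)}{-k(1-F(x))^{k-1}f(x)}$, obtaining $\frac{1}{kB(r,k)}$. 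You instead read $T_k(x)$ as the negative binomial cdf at $k-1$ with success probability $p(x)=\frac{1}{1-\ln F(x)}$, reindex the tail as $1-T_k=p^r q^k\sum_{j\ge 0}\binom{j+k+r-1}{j+k}q^j$, and extract the leading term $\binom{k+r-1}{k}$ as $q\to 0$ (dominated convergence, as you note, handles the remainder). Since $\binom{k+r-1}{k}=\frac{\Gamma(r+k)}{\Gamma(r)\Gamma(k+1)}=\frac{1}{kB(r,k)}$, your constant agrees exactly with what the paper's proof derives and with the quantity $\theta_k=\frac{1}{kB(r,k)}$ used in Section 5; note that the constant $\frac{k}{B(r,k)}$ printed in the theorem statement is therefore a typo (off by a factor $k^2$), which your computation implicitly corrects. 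Your probabilistic reading also makes the df and right-extremity claims transparent, whereas the paper's contradiction argument is the same total-mass identity in disguise; your route is arguably cleaner. Two small caveats: the ``classical fact'' that tail equivalence preserves max domains of attraction is not actually stated in Section 8, so you should either cite it (Resnick, 1987) or repeat the one-line regular-variation argument the paper uses in Theorem \ref{Thm-Fk}(b) (multiply and divide by the corresponding ratio for $F$); and your norming constants, with $k!$ replaced by $kB(r,k)$, give the limit exactly $\Phi_{k\alpha}$ and match Section 5's choice $\eta_k=\theta_k$, whereas the constants printed in the statement, $a_n=F^-\left(1-(1/n)^{1/k}\right)$, yield only a scale transform of $\Phi_{k\alpha}$ --- an inconsistency internal to the paper, not an error in your argument.
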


\subsection{A general result on tail behaviour of random $k$-th upper order statistics}
The following result, proved in Barakat and Nigm (2002) for order statistics under power normalization, can be proved just by replacing the power normalization there by linear normalization. Here the tail behaviour of the limit law is studied. 

\begin{thm} If $\;\{X_n, n \geq 1\}\;$ is a sequence of iid rvs with df $\;F,\;$ $F \in D_l(G),\;$ $G = \Phi_\alpha \;$ or $\;\Psi_\alpha\;$ or $\;\Lambda,\;$ $k\;$  is a fixed positive integer and the positive-integer valued rv $\;N_n\;$ is such that $\;\frac {N_n}n\;$ converges in probability to $\;\tau,\;$ a positive valued rv, then 
\[ \lim_{n\to  \infty} P(X_{N_n-k+1:N_n} \leq a_nx+b_n) = \sum_{i=0}^{k-1}\dfrac{(-\log G(x))^i}{i!} \int_{0}^{\infty} y^i G^y(x) dP(\tau \leq y), \;\;x \in \Real. \] 
\end{thm}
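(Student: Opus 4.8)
The plan is to condition on $N_n$ and exploit that, given $N_n=r$, the event $\{X_{N_n-k+1:N_n}\le a_nx+b_n\}$ is exactly the event that fewer than $k$ of $X_1,\dots,X_r$ exceed $a_nx+b_n$. Writing $F_n:=F(a_nx+b_n)$ and $u:=-\log G(x)$, independence of $N_n$ from the sequence $\{X_j\}$ gives
\[ P(X_{N_n-k+1:N_n}\le a_nx+b_n)=E\Bigl[\,\sum_{i=0}^{k-1}\binom{N_n}{i}F_n^{\,N_n-i}(1-F_n)^i\Bigr]=\sum_{i=0}^{k-1}E[T_{n,i}], \]
where $T_{n,i}:=\binom{N_n}{i}F_n^{\,N_n-i}(1-F_n)^i=P\bigl(\mathrm{Bin}(N_n,1-F_n)=i\mid N_n\bigr)$. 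The crucial observation is that each $T_{n,i}$ is a conditional binomial point probability, hence $0\le T_{n,i}\le 1$; this uniform bound is what will let me move the limit inside the expectation. From the equivalent form (\ref{Introduction_e1.2}) of $F\in D_l(G)$ I record the deterministic limits $n(1-F_n)\to u$ and $F_n\to 1$.

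Next I would show that each $T_{n,i}$ converges in probability to $\frac{u^i}{i!}\tau^iG^\tau(x)$. For the binomial-coefficient factor, use the polynomial identity $\binom{N_n}{i}(1-F_n)^i=\frac{1}{i!}\prod_{j=0}^{i-1}\bigl[(N_n-j)(1-F_n)\bigr]$, valid for every integer $N_n\ge 0$, and note that each factor $(N_n-j)(1-F_n)=\frac{N_n-j}{n}\cdot n(1-F_n)$ is the product of a term converging in probability to $\tau$ (since $N_n/n\to\tau$ in probability and $j/n\to 0$) with the numerical sequence $n(1-F_n)\to u$; hence the product over $j$ tends in probability to $(\tau u)^i$. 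For the power factor, write $F_n^{\,N_n-i}=F_n^{-i}\exp(N_n\log F_n)$ with $F_n^{-i}\to 1$, and use $\log F_n=-(1-F_n)+o(1-F_n)$ together with $n(1-F_n)\to u$ to get $n\log F_n\to -u$, so that $N_n\log F_n=\frac{N_n}{n}(n\log F_n)\to -\tau u$ in probability and therefore $F_n^{\,N_n}\to e^{-\tau u}=G^\tau(x)$ in probability. Multiplying gives $T_{n,i}\to\frac{u^i}{i!}\tau^iG^\tau(x)$ in probability; equivalently the inner sum converges in probability to the Poisson tail $\sum_{i=0}^{k-1}\frac{(\tau u)^i}{i!}e^{-\tau u}$, which is the intuitive content, since $\mathrm{Bin}(N_n,1-F_n)$ is asymptotically $\mathrm{Poisson}(\tau u)$.

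Finally I would pass to the limit in the expectation. Since $0\le T_{n,i}\le 1$, convergence in probability upgrades to convergence of means by the bounded convergence theorem, yielding
\[ \lim_{n\to\infty}E[T_{n,i}]=E\Bigl[\tfrac{u^i}{i!}\tau^iG^\tau(x)\Bigr]=\frac{(-\log G(x))^i}{i!}\int_0^\infty y^iG^y(x)\,dP(\tau\le y), \]
and summing over $i=0,\dots,k-1$ gives the stated formula. The main obstacle is precisely this interchange of limit and expectation: the summands converge only in probability (not surely) and $\tau$ is itself random, so I must rely on the uniform bound $T_{n,i}\le 1$ rather than on any monotone or pointwise argument. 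A secondary technical point is controlling the $o(1-F_n)$ error in the logarithmic expansion so that $n\log F_n\to -u$ genuinely holds; and the edge cases are handled trivially, namely $G(x)=0$ (where $u=+\infty$ and both sides vanish) and the boundary values of $G$. Throughout I use, as in the rest of the paper, that $N_n$ is independent of $\{X_j\}$ and that $P(N_n\ge k)\to 1$ since $\tau>0$, which should be stated explicitly.
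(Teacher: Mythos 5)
Your proof is correct, but it takes a genuinely different route from the paper for the simple reason that the paper offers no proof of this theorem at all: it states only that the result was proved in Barakat and Nigm (2002) under power normalization and that the argument transfers verbatim to linear norming. Your argument is instead self-contained and elementary: condition on $N_n$ to write the probability as $E\bigl[\sum_{i=0}^{k-1}\binom{N_n}{i}F_n^{N_n-i}(1-F_n)^i\bigr]$ with $F_n=F(a_nx+b_n)$, recognize each summand as a conditional binomial point probability bounded by $1$, prove convergence in probability of each summand to $\frac{(\tau u)^i}{i!}e^{-\tau u}$ with $u=-\log G(x)$ via the factorization $\binom{N_n}{i}(1-F_n)^i=\frac{1}{i!}\prod_{j=0}^{i-1}(N_n-j)(1-F_n)$ and the deterministic limits $n(1-F_n)\to u$, $n\log F_n\to -u$, and finish by bounded convergence. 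All of these steps are sound, and this is closer in spirit to the paper's announced aim of elementary proofs than the citation it actually gives. One caveat deserves emphasis: your very first conditioning step requires $N_n$ to be independent of $\{X_j\}$, a hypothesis absent from the theorem as stated --- transfer theorems of the Barakat--Nigm type are typically designed to avoid it by exploiting the mixing property of extremes --- so, strictly, you prove the theorem only under that added assumption; you flag this honestly, and it matches the standing setup of Sections 3 and 4, but it should indeed be added to the statement if your proof is adopted. The remaining loose ends you note, the event $\{N_n<k\}$ (negligible since $\tau>0$ a.s.\ and $k/n\to 0$) and the case $G(x)=0$ (bound the probability by $P(\mathrm{Bin}(N_n,1-F_n)\le k-1)\to 0$ on $\{N_n\ge \delta n\}$), are minor and fixable exactly as you indicate.
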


\begin{thm} If $\displaystyle B_k(x)= \sum_{i=0}^{k-1}\dfrac{(-\log F(x))^i}{i!} E_{\tau}(\tau^i F^\tau(x))\;$ with $\;F \;$ an absolutely continuous df, then
\begin{enumerate}
\item[(a)]  $B_k\;$ is a df with $\;r(B_k) = r(F),\;$ pdf $\displaystyle b_{k}(x)=\dfrac{(-\log F(x))^{k-1}}{(k-1)!} E_{\tau}(\tau^{k} F^{\tau-1}(x)),\;$ and satisfying the recurrence relation $\displaystyle B_{k+1}(x) = B_k(x) +\dfrac{(-\log F(x))^k}{k!} E_{\tau}(\tau^k F^\tau(x)), x \in \{y \in \Real: F(y) > 0\} ;$ 
\item[(b)]  $B_k$ is tail equivalent to $\;H_k.$
\end{enumerate}\label{Thm_FkBN}
\end{thm}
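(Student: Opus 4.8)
The organizing idea is to recognize the bracketed sum defining $B_k$ as a Poisson distribution function. Writing $w = -\log F(x)$ and pulling the finite sum inside the expectation, I would first record the representation
\[ B_k(x) = E_\tau\!\left[\, \sum_{i=0}^{k-1} \frac{(\tau w)^i}{i!}\, e^{-\tau w} \,\right] = E_\tau\!\left[\, Q_k(\tau w) \,\right], \qquad Q_k(\lambda) := \sum_{i=0}^{k-1} \frac{\lambda^i}{i!}\, e^{-\lambda}, \]
where $Q_k(\lambda) = P(\mathrm{Pois}(\lambda) \le k-1)$ is the Poisson df evaluated at $k-1$ (here $e^{-\tau w} = F^{\tau}(x)$). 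Every assertion then reduces to elementary monotone and asymptotic properties of $Q_k$.

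For part (a) the recurrence is immediate, since $B_{k+1}(x) - B_k(x)$ is exactly the single extra summand $\frac{w^k}{k!} E_\tau(\tau^k e^{-\tau w}) = \frac{(-\log F(x))^k}{k!} E_\tau(\tau^k F^{\tau}(x))$. For the distribution-function property I would use $Q_k(0) = 1$ and $Q_k(\lambda) \to 0$ as $\lambda \to \infty$: as $x \to r(F)$ we have $w \to 0$ and hence $B_k(x) \to E_\tau[Q_k(0)] = 1$, while as $x \to l(F)$ we have $w \to \infty$ and $B_k(x) \to 0$; since $Q_k(\tau w) < 1$ whenever $w > 0$ and $\tau > 0$, we also get $B_k(x) < 1$ for $x < r(F)$, so $r(B_k) = r(F)$. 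The density (and hence monotonicity) follows by differentiating the representation: with $Q_k'(\lambda) = -\frac{\lambda^{k-1}}{(k-1)!} e^{-\lambda}$ and $\frac{d}{dx}(-\log F(x)) = -f(x)/F(x)$, the chain rule gives $b_k(x) = f(x)\,\frac{(-\log F(x))^{k-1}}{(k-1)!}\, E_\tau(\tau^k F^{\tau-1}(x)) \ge 0$, the interchange of $\frac{d}{dx}$ with $E_\tau$ being licensed by the uniform majorant below. Alternatively one may derive the pdf by induction on $k$ from the recurrence, precisely as in Theorem \ref{thm:r-max-df}.

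For part (b) I would write $\bar Q_k(\lambda) = 1 - Q_k(\lambda) = P(\mathrm{Pois}(\lambda) \ge k)$, so that $1 - B_k(x) = E_\tau[\bar Q_k(\tau w)]$, and base the estimate on two facts about $\bar Q_k$: the asymptotic $\bar Q_k(\lambda) \sim \lambda^k/k!$ as $\lambda \to 0^{+}$ (the $i=k$ term dominating), and the global bound $\bar Q_k(\lambda) \le \lambda^k/k!$ for all $\lambda \ge 0$, which I would obtain by integrating $\bar Q_k'(\lambda) = \frac{\lambda^{k-1}}{(k-1)!} e^{-\lambda} \le \frac{\lambda^{k-1}}{(k-1)!}$ from $0$. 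The bound supplies the integrable majorant $\bar Q_k(\tau w)/w^k \le \tau^k/k!$, so dominated convergence yields $\lim_{x \to r(F)} (1-B_k(x))/w^k = E_\tau(\tau^k)/k!$. Since $1 - F(x) \sim -\log F(x) = w$ as $x \to r(F)$, we have $1 - H_k(x) = (1-F(x))^k \sim w^k$, and therefore $\lim_{x \to r(F)} \frac{1-B_k(x)}{1-H_k(x)} = \frac{E_\tau(\tau^k)}{k!}$, which is the claimed tail equivalence to $H_k$.

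The step I expect to be the crux is this passage of the limit inside $E_\tau$ in part (b): it forces the moment condition $0 < E_\tau(\tau^k) < \infty$, and the domination $\bar Q_k(\lambda) \le \lambda^k/k!$ is exactly what makes the interchange rigorous while guaranteeing a finite, positive tail-equivalence constant. As a consistency check the constant $E_\tau(\tau^k)/k!$ specializes correctly to the earlier results: $\tau \equiv 1$ returns $1/k!$ as in Theorem \ref{Thm-Fk}, $\tau$ uniform on $(0,1)$ returns $1/(k+1)!$ as in Theorem \ref{Thm-FkDU}, and $\tau$ standard exponential returns $1$ as in Theorem \ref{Thm-FkG2}.
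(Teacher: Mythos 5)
Your proof is correct, and in part (b) it takes a genuinely different route from the paper's. For (a) you and the paper arrive at the same place by essentially the same means: the paper verifies $r(B_k)=r(F)$ by comparing the truncated exponential series with the full one (the same strictness your bound $Q_k(\tau w)<1$ encodes), and it obtains the pdf by induction on $k$ from the recurrence --- the alternative you mention --- whereas you get it in one stroke from $Q_k'(\lambda)=-\lambda^{k-1}e^{-\lambda}/(k-1)!$, which is the same telescoping packaged once. The real divergence is in (b): the paper computes $\lim_{x\to r(F)}(1-B_k(x))/(1-F(x))^k$ by L'Hospital's rule, reducing it to $\lim \frac{1}{k!}E_\tau\bigl(\tau^k F^{\tau-1}(x)\bigr)$, and then passes the limit inside $E_\tau$ without comment; you instead work directly with $1-B_k(x)=E_\tau\bigl[\bar Q_k(\tau w)\bigr]$, the local asymptotic $\bar Q_k(\lambda)\sim\lambda^k/k!$ as $\lambda\to 0^{+}$, the global bound $\bar Q_k(\lambda)\le\lambda^k/k!$, and dominated convergence. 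Your route buys exactly the rigor the paper elides: the limit--expectation interchange is justified, and the hypothesis $E_\tau(\tau^k)<\infty$, which the paper needs but never states (both in its final step and in its differentiation under the expectation sign), is surfaced as an explicit moment condition; your consistency checks ($\tau\equiv 1$, uniform, exponential) correctly recover the constants of Theorems \ref{Thm-Fk}, \ref{Thm-FkDU} and \ref{Thm-FkG2}. The paper's route buys brevity and uniformity with its other tail computations (Theorems \ref{Thm-Fk}, \ref{Thm-FkDU}, \ref{Thm_FkNB3} all run L'Hospital on $(1-V_k)/(1-F)^k$). One small caveat on your side: the bound $\bar Q_k(\lambda)\le\lambda^k/k!$ licenses the dominated convergence in (b), but for differentiating under $E_\tau$ in (a) the relevant dominating function is rather $\tau^k w^{k-1}\,\abs{w'(x)}/(k-1)!$ locally in $x$ (or a bound of the form $\tau^k e^{-\tau w}\le (k/(ew))^k$ on compact sets where $w$ is bounded away from $0$), so you should not point at the part-(b) majorant for that step; with $E_\tau(\tau^k)<\infty$ in hand, either fix is immediate.
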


\section{Examples}
In this section we demonstrate the results through some examples. We  consider the standard Pareto, uniform and standard normal distributions. We also consider exponential distribution as a special case of gamma distribution.  For the purpose of specifying the norming constants, we use the following notations: $\;\delta_k=\dfrac 1{k!},\,$ $\theta_k=\frac 1{kB(r,k)}\;$ and $\gamma_k=\frac 1{k!}E_\tau(\tau^k);\;$ and 
\begin{eqnarray*} \eta_k = \left\{ \begin{array}{cll} \delta_k  & \;\; \text{if} & \;\;V_k = F_k, \\ \delta_{k+1} & \; \; \text{if} & \;\;V_k = U_k, \\ \delta_1 & \;\; \text{if} & \;\; V_k = R_k, \\ \theta_k & \;\; \text{if} & \;\; V_k = T_k,  \\ \gamma_k & \;\; \text{if} & \;\; V_k = B_k.  \end{array} \right. \end{eqnarray*}

\begin{enumerate}
\item[(a)] If $F(x)=\Phi_\alpha(x)=\exp \Set{-x^{-\alpha}}, \quad x>0,$ the Frechet law, then $\;V_k \in D_l( \Phi_{k\alpha})$ with norming constants $a_n=(n \eta_k)^{1\over k \alpha},\;$ and $\;b_n = 0.$

\item[(b)] For standard Pareto df with $1-F(x) \sim cx^{-\alpha}, \quad x>1,$ for some constants $c>0,\alpha>0,$ we have $F \in D_l(\Phi_\alpha)$ with the norming constants $b_n=0$ and $a_n=(cn)^{\frac 1\alpha}$ and with $\;V_k \in D_l( \Phi_{k\alpha})$ with norming constants $a_n=(c^k n\eta_k)^{-1\over k \alpha},\;$ and $\;b_n = 0.$

\item[(c)] If  $F(x)=\Psi_\alpha(x)=\exp \Set{-(-x)^\alpha}, \quad x<0,$ the Weibull law, $\;V_k \in D_l( \Psi_{k\alpha})$ with norming constants $\;a_n=(n\eta_k)^{1\over k \alpha}\;$ and $\;b_n = 0.$

\item[(d)]  If $F(x)=x, \quad 0<x<1\;$ is the standard uniform distribution, then $F \in D_l(\Psi_\alpha)$ with norming constants  $a_n=n^{-1}\;$ and $\;b_n = 1,\;$ and $\;V_k \in D_l( \Psi_{k\alpha})$ with norming constants $a_n=(n\eta_k)^{1\over k \alpha}$ and $b_n = 1.$ 

\item[(e)] If $\;F(x)= \dfrac 1{\sqrt{2\pi}} \int_{-\infty}^x  e^{-x^2/2} dx, x \in \Real, \;$ the standard normal distribution, then using Mills' ratio as in Feller (1971), $1-F(x) \sim \frac 1x F^\prime(x)=\frac 1x \frac 1{\sqrt{2\pi}} \exp \Set{-\frac {x^2}2}$. Further $\;F\;$ is a von-Mises function as in (\ref{von-Mises}) and $\;F \in D_l(\Lambda)\;$ with auxiliary function $\;v(x) = \frac {1-F(x)}{F^\prime(x)}\sim \frac 1x.\;$ Then the norming constants can be chosen as $\;a_n=(2\ln n)^{-1/2}, b_n=\sqrt{2\ln n}-\frac 12 \left(\frac {\ln 4\pi +\ln \ln n}{\sqrt{2\ln n}}\right).\;$ Further, $\;V_k \in D_l(\Lambda)\;$ with auxiliary function $\frac 1k v(x)={1\over kx}\;$ and the norming constants can be chosen as $\;a_n=\Set{\frac 2k (\ln n+\ln \delta_1)}^{-1/2}, b_n=\sqrt{\frac {2 (\ln n+\ln \delta)}k}-\frac {\sqrt k} 2 \Set{\frac {\ln 4\pi +\ln (\ln n+\ln \delta) -\ln k}{\sqrt {2\ln n+\ln \delta}}}.$

\item[(f)] Consider a DF $F(x)=1-\exp \Set{-\frac x{1-x}}\; \mbox{if} \; 0 \le x < 1 \;.$ Then simple computations show that $ \lim_{x \to1} \frac {\set{1-F(x)}F^{\prime\prime}(x)}{\set{F^\prime (x)}^2}=\lim_{x \rightarrow 1}  \frac {\set{1-F(x)}^2(1-2x)}{(1-x)^4} \times \frac {(1-x)^4}{\set{1-F(x)}^2}=-1.$  Further $F^{\prime\prime}(x) < 0$   if for $x>\frac 12.$  Then $F\in D_l(\Lambda) $ with the auxiliary function $\;v(x) = \frac {1-F(x)}{F^\prime(x)}=(1-x)^2\;$ and the norming constants  $\; b_n= \frac {\ln n }{1+\ln n}\;$ and $\;a_n= \frac 1{\set{1+\ln n}^2}.$ Further $V_k\in D_l(\Lambda) $ with the auxiliary function $\;\frac 1k v(x)=\frac 1k (1-x)^2\;$ and the norming constants   $\; b_n=\frac {\ln n+\ln \delta_1 }{k+\ln n+\ln \delta_1}\;$ and $\;a_n= \frac 1{\set{1+\ln n+\ln \delta_1}^2}.$

\item[(g)] Consider the Gamma distribution with DF satisfying $F^\prime(x)=\dfrac {x^\alpha e^{-x}}{\Gamma(\alpha+1)}, \quad x>0, \alpha>0.$ The $F\in D_l(\Lambda)$ with a constant auxiliary function $v(x)=1$  and norming constants $a_n=1, b_n= \ln n + \alpha \ln \ln n - \ln \Gamma(\alpha+1).$ Further $V_k\in D_l(\Lambda) $ with the auxiliary function $\;\frac 1k v(x)=\frac 1k\;$ and the norming constants $a_n=\dfrac 1k, b_n= \dfrac {\ln n+\ln \delta_1 }k + \alpha \ln( \ln n+\ln \delta_1) - \alpha\ln k- \ln \Gamma(\alpha+1).$\\ In particular when $\alpha=0$ we get exponential distribution with PDF $f(x)=e^{-x}, \quad x>0.$ The corresponding norming constants are $a_n=1,b_n=\ln n$ for $F$,and $a_n=\frac 1k,b_n=\frac {\ln n + \ln \delta_1} k$ for $V_k.$

\item[(h)]  For Log-Gamma distribution with df $F^\prime(x)=\frac {\alpha^\beta x^{-\alpha-1} (\ln x)^{\beta-1}}{\Gamma(\beta)},$ $\;F \in D_l(\Phi_\alpha)\;$  with norming constants $b_n=0$ and $\;a_n=((\Gamma(\beta))^{-1}(\ln n)^{\beta-1}n^{1/\alpha}.$ Further $\;V_k \in D_l(\Phi_{k\alpha})\;$  with norming constants $b_n=0$ and $b_n=0$ and $\;a_n=\set{\frac {(\ln n +\ln \delta_1)^{\beta-1} n^{\frac 1k}}{k^{\beta-1}\Gamma(\beta)(k!)^{\frac 1k}}}^{\frac 1\alpha}.$

\item[(i)]  For Cauchy distribution with df $F^\prime(x)=\frac 1{\pi(1+x^2)},$ $\;F \in D_l(\Phi_\alpha)\;$  with norming constants $b_n=0$ and $\;a_n=n/\pi.$ Further $\;V_k \in D_l(\Phi_{k\alpha})\;$  with norming constants $b_n=0$ and $\;a_n=\frac 1\pi \Set { n\delta_1}^{\frac 1k}.$

\item[(j)]   For Beta distribution with df $F(x)=\frac {x^{\alpha-1}(1-x)^{\beta-1}}{B(\alpha,\beta)},$ $\;F \in D_l(\Psi_\alpha)\;$  with norming constants $b_n=1$ and $\;a_n=\Set{n \frac {\Gamma(\alpha+\beta)}{\Gamma(\alpha)\Gamma(\beta+1)}}^{-1/\beta}.$ Further $\;V_k \in D_l(\Phi_{k\alpha})\;$  with norming constants $b_n=1$  and $\;\Set{ \Set{n\delta_1 }^{-\frac 1k} \frac {\Gamma(\alpha+\beta)}{\Gamma(\alpha)\Gamma(\beta+1)} }^{-\frac 1\beta}.$
\end{enumerate}

\section{Stochastic orderings}
In this section, we study stochastic orderings between the rvs having dfs $\;F, \, H_k\;$ and the limit dfs for the $\;k$-th random upper order statistic. We denote the rvs, by abuse of notation, by the dfs themselves. A rv $X$ is said to be stochastically smaller than $Y,$ denoted by $X \le_{\mbox{st}} Y$ if for all real $x,$ the df of $X,$ $P(X\ge x) \le P(Y\ge x),$ the df of $Y.$ 

 \begin{thm} The following are true. 
 \begin{enumerate}
 \item[(i)] $U_1 \le_{\mbox{st}} F.$
 \item[(ii)] $H_k \le_{\mbox{st}} H_{k+1},    F_k(x) \le_{\mbox{st}}  F_{k+1}(x),  R_k(x) \le_{\mbox{st}} R_{k+1}(x), T_k(x) \le_{\mbox{st}} T_{k+1}(x).$ 
 \item[(iii)] $ U_k(x) \le_{\mbox{st}} U_{k+1}(x)$ and $ U_k(x) \le_{\mbox{st}} F(x).$
 \item[(iv)] $R_k(x) \le_{\mbox{st}} H_k(x) \le_{\mbox{st}} F(x).$
 \item[(v)] $ F_k(x) \le_{\mbox{st}}  F(x)$ and $T_k(x) \le_{\mbox{st}} F^k(x).$ 
 \item[(vi)] $R_k(x) \le_{\mbox{st}} T_k(x) $
 \item[(vii)] $F_k(x) \le_{\mbox{st}} U_k(x) $
\end{enumerate}\label{Thm_StOr}
\end{thm}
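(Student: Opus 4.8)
The plan is to reduce the whole theorem to a finite list of pointwise inequalities between the distribution functions involved. By the definition of $\le_{\mbox{st}}$, for any two of the dfs $A,B$ appearing here the assertion $A\le_{\mbox{st}}B$ is equivalent to the pointwise comparison of $A(x)$ and $B(x)$ (equivalently of the tails $1-A$ and $1-B$) at every $x$. All of $H_k,F_k,U_k,R_k,T_k$ share the extremities of $F$, so outside $\{y:F(y)>0\}$ the dfs agree and nothing is to be proved; on that set I would substitute $t=F(x)\in(0,1)$, or $s=-\ln t>0$, turning each claim into a one-variable inequality. The claims then split into two families: monotonicity in the index $k$ (the chains in (ii), (iii), (vi), (vii)) and comparisons at fixed $k$ against $F$, $F^k$ or $H_k$ (namely (i), (iv), (v), and the comparisons with $F$ in (iii)).

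For the monotonicity family I would use the recurrence relations already recorded. Directly, $H_{k+1}(x)-H_k(x)=F(x)(1-F(x))^k\ge 0$ and $R_{k+1}(x)-R_k(x)=\frac{1}{1-\ln F(x)}\left(\frac{-\ln F(x)}{1-\ln F(x)}\right)^k\ge 0$; the increment for $F_k$ is $\frac{F(x)}{k!}(-\ln F(x))^k\ge 0$ by (\ref{eqn:recdf}), and for $T_k$ it is $\binom{k+r-1}{k}\frac{(-\ln F(x))^k}{(1-\ln F(x))^{k+r}}\ge 0$ by (\ref{eqn:recdf1}). The only non-obvious one is $U_k$: by the recurrence in the remark following Theorem \ref{Thm-FkDU}, $U_{k+1}(x)-U_k(x)=U_1(x)-F(x)\sum_{l=1}^k\frac{(-\ln F(x))^{l-1}}{l!}$, and writing $s=-\ln F(x)$ this is nonnegative because $\sum_{l=0}^{k}\frac{s^l}{l!}\le e^{s}$, so the subtracted term is at most $\frac{e^{-s}}{s}(e^{s}-1)=\frac{1-e^{-s}}{s}=U_1(x)$. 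The fixed-$k$ comparisons against $F$ reduce to standard elementary estimates: $-\ln t\ge 1-t$ sharpened to $1-t+t\ln t>0$ gives $U_1\ge F$ (hence (i), and $U_k\ge U_1\ge F$ for (iii)); $\sum_{i=0}^{k-1}\frac{(-\ln t)^i}{i!}\ge 1$ gives $F_k\ge F$; $(1-t)^k\le 1-t$ gives $H_k\ge F$; and $\frac{s}{1+s}\le 1-e^{-s}$ gives $1-R_k\le 1-H_k$ in (iv).

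The main obstacle will be the genuinely cross-family comparisons: $R_k$ versus $T_k$ in (vi), $F_k$ versus $U_k$ in (vii), and $T_k$ versus $F^k$ in (v). Tail equivalence (Theorems \ref{Thm-Fk}, \ref{Thm-FkDU}, \ref{Thm_FkNB3}) only pins down the leading constants near $r(F)$, so one must prove these inequalities for all $t\in(0,1)$, not merely asymptotically. For (vi) the clean device is the substitution $v=\frac{-\ln F}{1-\ln F}=\frac{s}{1+s}\in(0,1)$: then $1-R_k=v^k$, while $\frac{(-\ln F)^l}{(1-\ln F)^{l+r}}=(1-v)^r v^l$ shows $1-T_k=\sum_{l\ge k}\binom{l+r-1}{l}(1-v)^r v^l$ is exactly a negative-binomial tail, so $R_k$ is the geometric ($r=1$) case and (vi) becomes a term-by-term comparison of a geometric tail against a negative-binomial tail in $v$. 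For (vii) I would rewrite both $F_k$ and $U_k$ in the variable $s=-\ln F$ and reduce to a single inequality in $s>0$ whose endpoint behaviour as $s\to0$ is governed by the constants $1/k!$ and $1/(k+1)!$; establishing it throughout $(0,\infty)$ (rather than near $s=0$) by a monotonicity or convexity argument on the difference is where the real work lies. The comparison $T_k$ versus $F^k$ is handled similarly, comparing the tail $1-T_k$, of order $(1-F)^k$, against $1-F^k$, of order $k(1-F)$, and showing the former stays below the latter on all of $(0,1)$.
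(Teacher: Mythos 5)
Your reductions for (i)--(iv) and the $F_k$ half of (v) match the paper's own proof: pointwise tail inequalities via the recurrences, the bound $\sum_{l=0}^{k}s^l/l!<e^{s}$ for the $U_k$ chain in (iii), and $\frac{s}{1+s}\le 1-e^{-s}$ for (iv). But the three items you yourself flag as ``the real work'' are exactly where the proposal has genuine gaps. For (vii) you offer no proof at all, only the hope of ``a monotonicity or convexity argument on the difference''; the paper closes it with an exact series rearrangement that you miss: writing $\xi=-\ln F(x)$ and $kU_1(x)=k\sum_{j\ge 1}e^{-\xi}\xi^{j-1}/j!$, the subtracted finite sum cancels against the first $k-1$ terms of this series because $\frac{k}{l!}-\frac{k-l}{l!}=\frac{1}{(l-1)!}$, giving $U_k(x)=\sum_{l=1}^{k}\frac{e^{-\xi}\xi^{l-1}}{(l-1)!}+k\sum_{j=k+1}^{\infty}\frac{e^{-\xi}\xi^{j-1}}{j!}>F_k(x)$ identically in $\xi>0$ --- nothing about the asymptotic constants $1/k!$, $1/(k+1)!$ is needed. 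For (vi), your substitution $v=s/(1+s)$ is precisely the paper's identification $p=1/(1-\ln F)$, $1-R_k=q^k$ a geometric tail and $1-T_k$ a negative binomial tail; but your proposed ``term-by-term comparison'' fails at the level of the summands: $pq^l\le\binom{l+r-1}{l}p^{r}q^{l}$ is false for small $p$ (already for $r=2$, $p=0.1$ the negative binomial term is the smaller one for all $l\le 8$). The step that actually works is the paper's coupling: $\mathrm{NB}(r,p)$ is a sum of $r$ iid geometrics, hence pathwise at least as large as one of them, so $q^{k}=P(\mathrm{Geo}\ge k)\le P(\mathrm{NB}\ge k)=1-T_k(x)$.

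The $T_k$ half of (v) is worse than a gap: the global inequality you set out to prove, $1-T_k(x)\le 1-F^{k}(x)$ on all of $\{y:F(y)>0\}$, is false in general. Take $k=1$, $r=2$ and $s=-\ln F(x)=1$: then $1-T_1(x)=1-(1+s)^{-2}=0.75$, while $1-F(x)=1-e^{-1}\approx 0.632$, so the tail of $T_1$ lies \emph{above} that of $F$. Your tail-order heuristic (comparing $(1-F)^k$ against $k(1-F)$) only controls a neighbourhood of $r(F)$ and cannot certify --- and here correctly does not suggest --- a global bound. What the paper's proof actually establishes is $T_k(x)\ge (1-\ln F(x))^{-r}\ge F^{r}(x)$, the second step coming from the same inequality $1/(1-\ln F)\ge F$ you use in (iv); that is, the comparison is with $F^{r}$, $r$ the negative binomial parameter, and the exponent $k$ in the statement of (v) appears to be a typo in the paper which your blind attempt inherits fatally. (One further caution: the paper reads every claim $A\le_{\mbox{st}}B$ here as the pointwise tail inequality $1-B\le 1-A$, which is what your reduction implements, so on that organizational point you agree with the source.)
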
 

\section{Some preliminary results and proofs of some main results}

The following lemma is useful for computations involving derivatives. 

\begin{lem} \label{lem:lr} Let $\;S,\;h\;$ be two real valued functions of a real variable and $\;S^{(r)}\;$ denote the $\;r$-th derivative of $\;S,\,r \geq 1,\;$ an integer. Whenever the derivatives exist, the following are true. 
\begin{enumerate} 
\item[(a)] If $\; S(x) =\dfrac {h(x)}{1-x}, x \ne 1,\;$ $\;S^{(r)}(x)=r! \sum_{i=0}^r \dfrac {h^{(i)}(x)}{i!(1-x)^{r-i+1}}.\;$  
\item[(b)] If $\;S(x)=h(x)e^x,\;$ $\;S^{(r)}(x)=e^x \sum\limits_{i=0}^r \binom ri h^{(i)}(x).$
\item[(c)] If $\;S(x)=\dfrac {x^m}{(1-x)^n},$ $x\ne 1,\;$ for some integers $\;m \ge n \ge 1,\;$ $\;S^{(r)}(x)=r!\sum_{i=0}^r \binom m{r-i} \binom {n+i-1}i \dfrac {x^{m-r+i}}{(1-x)^{n+i}}.$ \end{enumerate}
\end{lem}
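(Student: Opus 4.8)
The plan is to derive all three identities from a single engine, the general Leibniz rule
\[
(uv)^{(r)}=\sum_{i=0}^{r}\binom{r}{i}u^{(r-i)}v^{(i)},
\]
applied after writing each $\;S\;$ as a product $\;uv\;$ whose two factors have transparent higher derivatives. Nothing beyond this rule, combined with the elementary derivatives of $\;e^{x},\;(1-x)^{-1},\;(1-x)^{-n}\;$ and $\;x^{m},\;$ is needed, so the whole argument reduces to collapsing the emerging factorials into the binomial coefficients that appear on the right-hand sides.

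For part (b) I would set $\;u=e^{x},\,v=h.\;$ Since $\;u^{(r-i)}=e^{x}\;$ for every $\;i,\;$ the rule gives $\;S^{(r)}=\sum_{i=0}^{r}\binom{r}{i}e^{x}h^{(i)}=e^{x}\sum_{i=0}^{r}\binom{r}{i}h^{(i)},\;$ which is exactly the claim with no further work. For part (a) I would set $\;u=(1-x)^{-1},\,v=h\;$ and first record the auxiliary fact $\;\frac{d^{s}}{dx^{s}}(1-x)^{-1}=s!\,(1-x)^{-(s+1)},\;$ a one-line induction on $\;s.\;$ Substituting $\;u^{(r-i)}=(r-i)!\,(1-x)^{-(r-i+1)}\;$ into the rule and using $\;\binom{r}{i}(r-i)!=r!/i!\;$ yields $\;S^{(r)}=r!\sum_{i=0}^{r}\frac{h^{(i)}(x)}{i!\,(1-x)^{r-i+1}},\;$ as stated.

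For part (c) I would set $\;u=x^{m},\,v=(1-x)^{-n},\;$ where now both factors contribute nontrivially: $\;(x^{m})^{(r-i)}=(r-i)!\binom{m}{r-i}x^{m-r+i}\;$ (vanishing when $\;r-i>m\;$), while the second auxiliary identity $\;\frac{d^{i}}{dx^{i}}(1-x)^{-n}=i!\binom{n+i-1}{i}(1-x)^{-(n+i)}\;$ follows by the same induction as in (a). Feeding these into the Leibniz rule and using the collapse $\;\binom{r}{i}(r-i)!\,i!=r!\;$ produces $\;S^{(r)}=r!\sum_{i=0}^{r}\binom{m}{r-i}\binom{n+i-1}{i}\frac{x^{m-r+i}}{(1-x)^{n+i}},\;$ which is the desired formula.

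I do not expect a genuine obstacle: the argument is entirely mechanical once the two auxiliary derivative formulas are in hand. The one place that warrants care is the sign in differentiating $\;(1-x)^{-n},\;$ where the chain rule contributes two factors of $\;-1\;$ that cancel, so the higher derivatives carry no alternating sign and all coefficients remain positive — a careless computation might wrongly insert a factor $\;(-1)^{s}.\;$ If one prefers to avoid the Leibniz rule, each part can instead be proved by a direct induction on $\;r,\;$ differentiating the claimed expression once more and recombining adjacent terms through Pascal's rule $\;\binom{r}{i}+\binom{r}{i-1}=\binom{r+1}{i};\;$ this route is slightly longer but equally routine.
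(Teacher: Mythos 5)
Your proof is correct, and it takes a genuinely different route from the paper. You derive all three identities in one pass from the general Leibniz rule $(uv)^{(r)}=\sum_{i=0}^{r}\binom{r}{i}u^{(r-i)}v^{(i)}$, together with the closed-form higher derivatives of $(1-x)^{-1}$ and $(1-x)^{-n}$ (each a one-line induction), after which the factorial collapse $\binom{r}{i}(r-i)!\,i!=r!$ gives the stated coefficients immediately; your remark about the cancelling signs in differentiating $(1-x)^{-n}$ is exactly the right point of care. The paper instead proves (a) by differentiating the functional equation $(1-x)S(x)=h(x)$ $r$ times to obtain the recursion $S^{(r)}(x)=\frac{h^{(r)}(x)}{1-x}+\frac{r}{1-x}S^{(r-1)}(x)$ and unrolling it, and proves (b) and (c) by direct induction on $r$, recombining adjacent terms via Pascal-type identities such as $\binom{s}{i}=\binom{s+1}{i}\frac{s+1-i}{s+1}$ — precisely the fallback route you sketch in your final sentence. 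Your approach buys uniformity and brevity: one engine handles all three cases, and the verification reduces to bookkeeping of factorials, with the induction confined to the two auxiliary derivative formulas. The paper's approach buys self-containedness at the level of each part (no appeal to a named product-rule formula, though the paper freely uses the Leibniz rule elsewhere), at the cost of three separate and longer inductive computations in which the coefficient recombinations are the delicate step.
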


\begin{proof} 
(a) By differentiating $\;(1-x)S(x)=h(x)\;$ repeatedly $\;r\;$ times with respect to $\;x,\;$ we get $\;(1-x)S^{(r)}(x)- r S^{(r-1)}(x)=h^{(r)}(x) \Rightarrow S^{(r)}(x) = \frac {h^{(r)}(x)}{1-x}+\frac{r}{1-x}S^{(r-1)}(x).$ Hence 
\begin{eqnarray*}
S^{(r)}(x)&=&\frac {h^{(r)}(x)}{1-x}+\dfrac{r}{1-x}\left(\dfrac{h^{(r-1)}(x)}{1-x}+\dfrac{r-1}{1-x}S^{(r-2)}(x)\right),
\\&=&\dfrac {h^{(r)}(x)}{1-x}+\dfrac{rh^{(r-1)}(x)}{(1-x)^2}+\dfrac{r(r-1)}{(1-x)^2}\left(\frac{h^{(r-2)}(x)}{1-x}+\dfrac{r-2}{1-x}S^{(r-3)}(x)\right),
\\&=&\dfrac {r!h^{(r)}(x)}{r!(1-x)}+\dfrac{r!h^{(r-1)}(x)}{(r-1)!(1-x)^2}+\dfrac{r!h^{(r-2)}(x)}{(r-2)!(1-x)^3}+\ldots + \dfrac{r!h^{(1)}(x)}{1!(1-x)^r}+\dfrac{r!h(x)}{0!(1-x)^{r+1}}, \\ & & \text{proving (a). }
\end{eqnarray*}

(b) The proof is by induction on $\;r.\;$ For $\;r=1,$ observe that $\; S^{(1)}(x)=e^x(h(x)+ h^{(1)}(x))=e^x \sum_{i=0}^1 \binom 1i h^{(i)}(x), \;\; \mbox{where} \; h^{(0)}(x) = h(x),$  which is true.
Assuming that the result is true for $\;r=s,\;$ that is, $\;S^{(s)}(x)=e^x \sum\limits_{i=0}^s \binom si h^{(i)}(x),\;$ and differentiating again with respect to $\;x,\;$ we have 
\begin{eqnarray*}
S^{(s+1)}(x)&=&e^x \sum_{i=0}^s \binom si h^{(i)}(x)+e^x \sum_{i=0}^s \binom si h^{(i+1)}(x),
\\&=&e^x \sum_{i=0}^s \binom {s+1}i \left(\dfrac {s+1-i}{s+1}\right) h^{(i)}(x)+e^x \sum_{i=0}^s \binom {s+1}{i+1}\left( \dfrac {i+1}{s+1}\right)h^{(i+1)}(x),
\\&=&e^x \sum_{j=0}^{s+1} \binom {s+1}j \left(\dfrac {s+1-j}{s+1}\right) h^{(j)}(x)+e^x \sum_{j=0}^{s+1} \binom {s+1}j\left( \dfrac j{s+1}\right) h^{(j)}(x),
\\&=&e^x \sum_{j=0}^{s+1} \binom {s+1}j  h^{(j)}(x), \;\;\text{which completes the proof of (b).}
\end{eqnarray*}

(c) The proof is by induction on $\;r.\;$ For $\;r=1,\;$ observe that  $\; S^{(1)}(x)=\dfrac{mx^{m-1}}{(1-x)^n}+\dfrac {nx^m}{(1-x)^{n+1}} = \sum_{i=0}^1 \binom m{1-i} \binom {n+i-1}i \dfrac {x^{m-1+i}}{(1-x)^{n+i}},$ which is true.
Assuming that the result is true for $\;r=s,\;$ that is, $\;S^{(s)}(x)=s!\sum\limits_{i=0}^s \binom m{s-i} \binom {n+i-1}i \dfrac {x^{m-s+i}}{(1-x)^{n+i}},\;$ and differentiating again with respect to $\;x,\;$ we have 
\begin{eqnarray*}
&& S^{(s+1)}(x)= s!\sum_{i=0}^s \binom m{s-i} \binom {n+i-1}i \left( \dfrac {(m-s+i) x^{m-s+i-1}}{(1-x)^{n+i}}+\dfrac {(n+i) x^{m-s+i}}{(1-x)^{n+i+1}}\right),
\\&=& s!\sum_{i=0}^s \binom m{s-i+1} \binom {n+i-1}i \dfrac {(s+1-i) x^{m-s-1+i}}{(1-x)^{n+i}}+ s!\sum_{i=0}^s \binom m{s-i} \binom {n+i}{i+1}  \dfrac {(i+1) x^{m-s+i}}{(1-x)^{n+i+1}},
\\&=& s!\sum_{j=0}^{s+1} \binom m{s+1-j} \binom {n-1+j}j \dfrac {(s+1-j) x^{m-s-1+j}}{(1-x)^{n+j}}+ s!\sum_{j=0}^{s+1} \binom m{s+1-j} \binom {n-1+j}j \dfrac {j x^{m-s-1+j}}{(1-x)^{n+j}},
\\&=& (s+1)! \sum_{j=0}^{s+1} \binom m{s+1-j} \binom {n-1+j}j \dfrac {j x^{m-s-1+j}}{(1-x)^{n+j}}, \;\;\text{proving (c).}
\end{eqnarray*} 
\end{proof} 

\begin{rem} We have the following particular cases. 
\begin{enumerate}
\item[(i)] In (a) of Lemma \ref{lem:lr}, if $\;h(x)=x^m\;$ for some fixed positive integer $\;m\ge 1,\;$ we have $\; h^{(l)}(x)=\frac{m!}{(m-l)!} x^{m-l}\;$ and $\; S^{(r)}(x)=r! \sum_{l=0}^r \frac{m! x^{m-l}}{l!(m-l)!(1-x)^{r+1-l}}=r! \sum_{l=0}^r \binom mr\frac {x^{m-l}}{(1-x)^{r+1-l}}.$
\item[(ii)] In (b) of Lemma \ref{lem:lr}, if $\;h(x)=x^m\;$ then $\;S^{(r)}(x)=e^x \sum\limits_{l=0}^r \binom rl\frac {m!}{(m-l)!} x^{m-l}.$
\end{enumerate}
\end{rem}

\begin{proof}[\textbf {Proof of Theorem \ref{lem:1}:}] For real $\;x < y,$  $F(x)\le F(y)\;$ implies that  $\;\set{1-F(x)}^k \ge \set{1-F(y)}^k,\;$ so that $\;H_k(x) \le H_k(y).\;$ Hence $\;H_k\;$ is non-decreasing.
Since $\;F\;$ is right-continuous,  $\;H_k\;$ is right-continuous. Also, $\; \lim\limits_{x\rightarrow-\infty} H_k(x)=0,\;$ and $\;\lim\limits_{x\rightarrow \infty} H_k(x)=1. \;$ 
Further, $\;H_k\;$ is also absolutely continuous with pdf $\; h_k(x)=H_k^\prime(x)=k\set{1-F(x)}^{k-1} f(x), x \in \Real.$
Observe that $\;H_k(r(F))=1-(1-F(r(F)))^k=1\;$ so that $\;r(H_k)\le r(F).$ If possible let $\;r(H_k)<r(F).\;$ Then $1=H_k(r(H_k))=1-(1-F(r(H_k)))^k<1\;$ a contradiction so that $\;r(H_k)= r(F).$ Now we prove (a), (b) and (c). 
\begin{enumerate}
\item[(a)] If $\;F\in D_l(\Phi_\alpha),\;$ then by (a) of Theorem $\ref{T-l-max},$ $\; 1-F\in RV_{-\alpha} \Rightarrow \lim_{t \rightarrow \infty}\dfrac {1-F(tx)}{1-F(t)} = x^{-\alpha}.\;$ Further, $\;\lim_{t \rightarrow \infty}\dfrac {1-H_k(tx)}{1-H_k(t)}=$ $\lim_{t \rightarrow \infty}\left(\dfrac {1-F(tx)}{1-F(t)}\right)^k$ $ = x^{-k\alpha}. \;$ Hence $\;1-H_k\in RV_{-k\alpha}\;$ which implies that $\;H_k\;$ belongs to $\;D_l(\Phi_{k\alpha}).\;$ Observing that $\;1-H_k=(1-F)^k,\;$ 
$\; \left(\dfrac 1{1-H_k}\right)^- (n)= \left(\dfrac 1{(1-F)^k}\right)^-(n)=\left(\dfrac 1{1-F}\right)^- \left(n^{\frac 1k}\right),\;$ we obtain the norming constants, proving (a). 
\item[(b)] If $\;F\in D_l(\Psi_\alpha),\;$ then $\;\displaystyle 1-F\left(r(F)-\frac 1x\right) \in RV_{-\alpha} \Rightarrow \lim_{t \rightarrow \infty}\dfrac{1-F\left(r(F)-\dfrac 1{tx}\right)}{1-F\left(r(F)-\dfrac 1t\right)} = x^{-\alpha}.\;$  Further, $\;r(H_k)=r(F).\;$ It follows that  
$\;\lim_{t \rightarrow \infty}\frac {1-H_k\left(r(H_k)-\frac 1{tx}\right)}{1-H_k\left(r(H_k)-\frac 1t\right)}$ \\ $=\lim_{t \rightarrow \infty} \left(\frac {1-F\left(r(F)-\frac 1{tx}\right)}{1-F\left(r(F)-\frac 1t\right)}\right)^k= x^{-k\alpha}.$
The rest of the proof is on lines similar to the proof in (a) and is omitted. 
\item[(c)] Since $\;F\in D_l(\Lambda),\;$ there exist a constant $\;c>0,\;$ continuous, positive function $\;v,\;$ with $\;\lim\limits_{t\uparrow r(F) } v^\prime(t)=0\;$ such that for all $\;t \in (z, r(F)),\;$ with $\;z < r(F),\;$  $\;F\;$ is a von-Mises function which has a representation $\displaystyle 1-F(x)=c\exp \left(-\int_z^x \frac 1{v(t)} dt\right),\;$ and $\;F\;$  satisfies the condition $\;\displaystyle \lim_{x\rightarrow \infty}\dfrac{\left(1-F(x)\right)F^{\prime\prime}(x)}{\left(F^\prime (x)\right)^2}=-1.\;$ We now show that $\;H_k\;$ is also a von-Mises function and use (e) of Theorem $\ref{T-l-max}$ to conclude that $\;H_k \in D_l(\Lambda).\;$ \\
Since $H_k^\prime(x)=k\left(1-F(x)\right)^{k-1}F^\prime(x),$ and $H_k^{\prime\prime}(x)=k\left(1-F(x)\right)^{k-1}F^{\prime\prime}(x)-k(k-1)\left(1-F(x)\right)^{k-2} \left(F^\prime(x)\right)^2,$ it follows that  \[\lim_{x\rightarrow \infty}\dfrac{(1-H_k(x))H_k^{\prime\prime}(x)}{(H_k^\prime (x))^2}=\lim_{x\rightarrow \infty} \frac 1k \left(\frac{(1-F(x))F^{\prime\prime}(x)}{(F^\prime (x))^2}-k+1\right) = -1,\] and hence from (\ref{von-Mises}), $\;H_k\;$ is a von-Mises function and belongs to $\;D_l(\Lambda)\;$ with norming constants $\;a_n=\dfrac 1k v(b_n)\;$ and $\;b_n=\left(\dfrac 1{1-H_k}\right)^-(n),\;$ proving (c).  
\end{enumerate}
\end{proof}

\begin{proof}[\textbf{Proof of Theorem \ref{Thm-Fk}:}]
\begin{enumerate}
\item[(a)] By Theorem \ref{thm:r-max-df}, $\;F_k\;$ is a df with pdf $\;f_k(x)=$ \\ $\frac {f(x)}{(k-1)!} (-\ln F(x))^{k-1}.\;$ To show that $\;r(F) = r(F_k),\;$ we consider two cases. First, if $\;r(F)=\infty,\;$ then $\;\lim_{x\rightarrow r(F)} F_k(x)=\lim_{x\rightarrow r(F)} F(x)\sum_{r=0}^{k-1} \frac {(-\ln F(x))^r}{r!}=1 \Rightarrow r(F_k) \le r(F).\;$ If possible, let $\;r(F_k)<\infty.\;$ Then $\;1 =F_k(r(F_k))$ \\  $=F(r(F_k))\sum_{i=0}^{k-1} \frac {(-\ln F(r(F_k)))^i}{i!}$ $ <F(r(F_k))\sum_{i=0}^\infty \frac {(-\ln F(r(F_k)))^i}{i!}$ \\ $=F(r(F_k))\exp \set{-\ln F(r(F_k))}=1,\;$ a contradiction proving that $\; r(F_k)=\infty.\;$ If $\;r(F)<\infty,\;$ arguing as above, if possible, let $\;r(F_k)<r(F).\;$ Since $\;F(r(F_k))=1,\;$ we must have $\;r(F_k)=r(F)-\epsilon,\;$ for some $\;\epsilon >0.\;$ Repeating steps as earlier we get a contradiction proving that $\;\epsilon \;$ must be equal to 0 and hence $\;r(F_k) = r(F).\;$ 

Defining $\; h(x)=\dfrac {-\ln F(x) }{1-F(x)},\;$ we have $\; \lim_{x\rightarrow r(F)} h(x)=1\;$ by L'Hospital's rule, and $\; \lim_{x \rightarrow r(F)}\frac {1-F_k(x)}{(1-F(x))^k}$ $=\lim_{x \rightarrow r(F)}\frac {-f_k(x)}{-k(1-F(x))^{k-1}f(x)}$ $
= \lim_{x \rightarrow r(F)}\frac {-\frac {f(x)}{(k-1)!} (-\ln F(x))^{k-1}}{-k(1-F(x))^{k-1}f(x)}	=$ $ \lim_{x \rightarrow r(F)} \frac {h^{k-1}(x)}{k!}=$ $\frac 1{k!},\;$ showing that $\;F_k\;$ is tail equivalent to $\;H_k.$
\item[(b)] If $\;F\in D_l(\Phi_\alpha),\;$ then by (a) of Theorem $\ref{T-l-max},$ $\; 1-F\in RV_{-\alpha} \Rightarrow \lim_{t \rightarrow \infty}\dfrac {1-F(tx)}{1-F(t)} = x^{-\alpha}.\;$ Further, $\;r(F)=r(F_k)=\infty,\;$ and using (a), and multiplying and dividing by $\;\left(\dfrac {1-F(tx)}{1-F(t)}\right)^k,\;$ we get $\;\lim_{t \rightarrow \infty}\dfrac {1-F_k(tx)}{1-F_k(t)}=$ $\lim_{t \rightarrow \infty}\left(\dfrac {1-F(tx)}{1-F(t)}\right)^k =$ $ x^{-k\alpha}.\;$ 
Hence $\;1-F_k \in RV_{-k\alpha},\;$ which implies that $\;F_k\;$ belongs to $\;D_l(\Phi_{k\alpha}).\;$ Observing that $\;1-H_k=(1-F)^k\;$ and $\;1-F_k \sim \dfrac 1{k!} (1-H_k)=\dfrac 1{k!}  (1-F)^k,\;$ and 
$\; \left(\dfrac 1{1-H_k}\right)^- (n)= \left(\dfrac 1{(1-F)^k}\right)^-(n)=\left(\dfrac 1{1-F}\right)^- \left(n^{\frac 1k}\right),\;$ and $\;\left(\dfrac 1{1-F_k}\right)^- (n)=\left(\dfrac {k!}{(1-F)^k}\right)^- (n)=\left(\dfrac 1{1-F}\right)^- \left(\dfrac n{k!}\right)^{\frac 1k}, $ we obtain the norming constants, proving (b). 
\item[(c)] If $\;F\in D_l(\Psi_\alpha),\;$ then by (b) of Theorem $\ref{T-l-max},$  $\;\displaystyle 1-F\left(r(F)-\frac 1x\right) \in RV_{-\alpha} \Rightarrow \lim_{t \rightarrow \infty}\dfrac{1-F\left(r(F)-\dfrac 1{tx}\right)}{1-F\left(r(F)-\dfrac 1t\right)} = x^{-\alpha}.\;$  Further, $\;r(F_k)=r(F)\;$ and the rest of the proof is on lines similar to the proof in (b) and is omitted. 
\item[(d)] We have $\;\lim_{x\rightarrow \infty} \dfrac{1-F_k(x)}{1-H_k(x)} = \lim_{x\rightarrow \infty} \dfrac {1-F_k(x)}{\set{1-F(x)}^k} = \frac 1{k!}.$
Thus $\;F_k\;$ and $\;H_k\;$ are tail equivalent and hence by proof of (c) of Theorem \ref{lem:1}, $\;F_k \in D_l(\Lambda)\;$ and the function $\;\dfrac 1k v(t)\;$ satisfies all the conditions for it to be an auxiliary function, proving (d).  
\end{enumerate}
\end{proof}

\begin{proof}[\textbf{Proof of Theorem \ref{Thm_DU1}:}]  
Setting the operator $D^r=\dfrac{d^r}{dF^r(x)}, r=1,2,3,\ldots,\;$ and considering the term corresponding to $\;i=0\;$ in (\ref{eqn:gr-max1}), we have $\; \frac 1n \sum_{r=m+1}^{m+n} F^r(x) = \frac {F^{m+1}(x)-F^{m+n+1}(x)}{n(1-F(x))}.\;$ Applying Lemma \ref{lem:lr}, we get 
$\;D^i \Set{\frac {F^{m+1}(x)-F^{m+n+1}(x)}{1-F(x)}} =$ \\ $i! \sum_{l=0}^i\set{\binom {m+1}l \frac {F^{m+1-l}(x)}{(1-F(x))^{i+1-l}}- \binom {m+n+1}l \frac {F^{m+n+1-l}(x)}{(1-F(x))^{i+1-l}}}.\;$ For the general $\;i$th term, for $\;i\ge 1,\;$ we have $\;\frac 1n \sum_{r=m+1}^{m+n} \binom ri  F^{r-i}(x) \set{1-F(x)}^i = \frac {\set{1-F(x)}^i}{i!n} \sum_{r=m+1}^{m+n} \frac {r!}{(r-i)!}  F^{r-i}(x) $
\begin{eqnarray*}
&=& \frac {\set{1-F(x)}^i}{i!n} \sum_{r=m+1}^{m+n} D^i F^r(x) =\frac {\set{1-F(x)}^i}{i!n} D^i \set{\sum_{r=m+1}^{m+n} F^r(x)}
\\&=& \frac {\set{1-F(x)}^i}{i!n} D^i \Set{\frac {F^{m+1}(x)-F^{m+n+1}(x)}{1-F(x)}}
\\&=&\frac {\set{1-F(x)}^i}{i!n} i! \sum_{l=0}^i\set{\binom {m+1}l \frac {F^{m+1-l}(x)}{(1-F(x))^{i+1-l}}- \binom {m+n+1}l \frac {F^{m+n+1-l}(x)}{(1-F(x))^{i+1-l}}}
\end{eqnarray*}
\begin{eqnarray*}
&=&\frac 1n \sum_{l=0}^i \set{ \binom {m+1}l F^{m+1-l}(x)- \binom {m+n+1}l F^{m+n+1-l}(x)}\set{1-F(x)}^{l-1} 
\\&=& \frac {F^{m+1}(x)-F^{m+n+1}(x)}{n(1-F(x))}+ \sum_{l=1}^i \binom {m+1}l F^{m+1-l}(x) \frac {\set{n(1-F(x))}^{l-1}}{n^l} -\\ && \sum_{l=1}^i \frac {(m+n+1)(m+n)\cdots(m+n+2-l)}{n^l l!} F^{m+n+1-l}(x) \set{n(1-F(x))}^{l-1}.
\end{eqnarray*}
Substituting these in (\ref{eqn:gr-max1}), we get 
\begin{eqnarray*}
F_{k:N_n}&=& \frac {F^{m+1}(x)-F^{m+n+1}(x)}{n(1-F(x))}+\sum_{i=1}^{k-1} \frac {F^{m+1}(x)-F^{m+n+1}(x)}{n(1-F(x))}+\\&& \sum_{i=1}^{k-1}  \sum_{l=1}^i \binom {m+1}l F^{m+1-l}(x) \frac {\set{n(1-F(x))}^{l-1}}{n^l} -\\&&\sum_{i=1}^{k-1} \sum_{l=1}^i \frac 1{l!} \prod_{j=-1}^{l-2} \frac {m+n-j}n  F^{m+n+1-l}(x) \set{n(1-F(x))}^{l-1}.
\end{eqnarray*}
Replacing $\;x\;$ by $\;a_n x+b_n\;$ and taking limit as $\;n \rightarrow \infty\;$  we have 
\begin{eqnarray*}
J_k(x)&=&\lim_{n\to  \infty} F_{k:N_n}(a_nx+b_n)
\\&=&\frac {1-G(x)}{-\ln G(x)}+\sum_{i=1}^{k-1} \frac {1-G(x)}{-\ln G(x)}+\sum_{i=1}^{k-1}\sum_{l=1}^i 0+\sum_{i=1}^{k-1} \sum_{l=1}^i \frac 1{l!} G(x) 
(-\ln G(x))^{l-1}
\\&=& k\Set{\frac {1-G(x)}{-\ln G(x)}}-(k-1)G(x)-G(x) \sum_{l=2}^{k-1} (k-l) \frac {(-\ln G(x))^{l-1}}{l!},
\end{eqnarray*}
completing the proof.
\end{proof}

\begin{proof}[\textbf{Proof of Theorem \ref{Thm-FkW}:}] 
Clearly $\lim_{x\to  +\infty} U_1(x)=$ $\lim_{x\to  +\infty} \frac {1-F(x)}{-\ln F(x)}=
$ $\frac {-f(x)}{-\frac {f(x)}{F(x)}} =1$ and \\
$ \lim_{x\to  -\infty} U_1(x)=\lim_{x\to  -\infty} \frac {1-F(x)}{-\ln F(x)}=0.$ Further $U_1$ is right continuous since $F$ is. Differentiating with respect to $x$ we get 
$u_1(x)=U_1^\prime(x)=\frac {-\ln F(x) (-f(x))-(1-F(x))\frac {-f(x)}{F(x)}}{(-\ln F(x))^2}=$ $\frac {-f(x)F(x)+f(x)U_1(x)}{-F(x) \ln F(x)}$ $=\frac {f(x)}{F(x)}\frac {U_1(x)-F(x)}{-\ln F(x)}.$ 
Now we claim that $U_1(x) \ge F(x).$ If the claim is true, then  $\;\frac {1-F(x)}{F(x)} \ge -\ln F(x) \Rightarrow \frac 1{F(x)} -1 \ge \ln \frac 1{F(x)}.$  Setting $\;y=\frac 1{F(x)}-1,\;$ since $F(x)\le 1$ it follows that $y\ge 0.$ Then from the preceding expression it follows that $\; y\ge \ln (1+y) \Rightarrow e^y \ge 1+y \;$ which is true and hence the claim is true. Thus $U_1$ is non-decreasing and so is a df. 

Observe that $U_1(r(F))=\lim_{x\to  r(F)} \frac {1-F(x)}{-\ln F(x)}=1$ so that $r(U_1)\le r(F).$ If possible, let $r(U_1)< r(F).$ Then $0<F(r(U_1))<1$ and then using Taylor's expansion we have $\;-\ln (F(r(U_1)))=$ $ -\ln (1-(1-F(r(U_1))))=$ $(1-F(r(U_1)))+\frac {(1-F(r(U_1)))^2}{2}+\ldots > (1-F(r(U_1))).\;$
Hence $U_1(r(U_1))<1$  which is a contradiction, proving $\;r(U_1)=r(F).$ We then have $\; \lim_{x\to  \infty} \frac {1-U_1(x)}{1- F(x)}=$ $ \lim_{x\to  \infty} \frac {U_1^\prime(x)}{f(x)}=$ $\lim_{x\to  \infty} \frac 1{F(x)} \Set{\frac  {1-F(x)+F(x) \ln F(x)}{(1-F(x))^2}}$ $=\frac 12,\;$ proving the theorem. 
\end{proof}

\begin{proof}[\textbf{Proof of Theorem \ref{Thm-FkDU}:}] 
\begin{enumerate}
\item[(a)]  Clearly $\lim_{x\to  +\infty}U_k(x)=1$ and $\lim_{x\to  -\infty}U_k(x)=0.$ Further $U_k$ is right continuous since $F$ is. Differentiating, we get 
\begin{eqnarray*}
u_k(x) &=& ku_1 (x)-f(x) \sum_{l=1}^{k-1} (k-l) \frac {(-\ln F(x))^{l-1}}{l!}+f(x)\sum_{l=1}^{k-1} (k-l) \frac {(l-1)(-\ln F(x))^{l-2}}{l!}
\\&=&  ku_1 (x)-f(x) \sum_{l=2}^k (k-l+1) \frac {(-\ln F(x))^{l-2}}{(l-1)!}+f(x)\sum_{l=1}^{k-1} (k-l) \frac {(l-1)(-\ln F(x))^{l-2}}{l!}
\\&=&  ku_1 (x)-f(x) \sum_{l=2}^k l(k-l+1) \frac {(-\ln F(x))^{l-2}}{l!}+f(x)\sum_{l=2}^k (k-l)(l-1) \frac {(-\ln F(x))^{l-2}}{l!}
\\&=&  \frac {kf(x)}{F(x)}\Set{\frac {1-F(x)+F(x) \ln F(x)}{(-\ln F(x))^2}} - k f(x) \sum_{l=2}^k  \frac {(-\ln F(x))^{l-2}}{l!}
\\&=& \frac {kf(x)}{(-\ln F(x))^2}\Set{\frac 1{F(x)} - \sum_{l=0}^k  \frac {(-\ln F(x))^l}{l!}} >0, \mbox{since} \; \sum_{l=0}^k  \frac {(-\ln F(x))^l}{l!}<e^{-\ln F(x)}=\frac 1{F(x)}
\end{eqnarray*}
Hence $U_k$ is non-decreasing and hence is a df. Observe that  $\; \lim_{x\to  r(F)} U_k(x)=\lim_{x\to  r(F)} \set{ kU_1(x)- (k-1)F(x) -F(x) \sum_{l=2}^{k-1} (k-l) \frac {(-\ln F(x))^{l-1}}{l!}}=1,\;$ so that $r(U_k)\le r(F).$ If possible, let $\;r(U_k)<r(F).\;$  For convenience set $\xi=-\ln F(r(U_k)).$ Then it follows that $e^{-\xi}= F(r(U_k))$ and for $F(r(U_k))<1$ we have $\xi>0,$ and  
\begin{eqnarray*}
1 = U_k(r(U_k)) &=& k\Set{\frac {1-F((r(U_k)))}{-\ln F((r(U_k)))}} -F(r(U_k)) \sum_{l=1}^{k-1} (k-l) \frac {(-\ln F(r(U_k)))^{l-1}}{l!} 
\\&=& k\Set{\frac {1-e^{-\xi}}{\xi}}-\sum_{l=1}^{k-1} (k-l) \frac {e^{-\xi} \xi^{l-1}}{l!} 
= k\sum_{j=1}^\infty \frac{e^{-\xi} \xi^{j-1}}{j!}-\sum_{l=1}^{k-1} (k-l) \frac {e^{-\xi} \xi^{l-1}}{l!} 
\\&=& k\sum_{j=k}^\infty \frac{e^{-\xi} \xi^{j-1}}{j!}+\sum_{l=1}^{k-1} \frac {e^{-\xi} \xi^{l-1}}{(l-1)!} 
=k\sum_{j=k}^\infty \frac{e^{-\xi} \xi^{j-1}}{j!}+1-\sum_{l=k}^\infty \frac {e^{-\xi} \xi^{l-1}}{(l-1)!} 
\\&=& k\sum_{j=k+1}^\infty \frac{e^{-\xi} \xi^{j-1}}{j!}+\frac{e^{-\xi} \xi^{k-1}}{(k-1)!} +1-\sum_{l=k+1}^\infty \frac {e^{-\xi} \xi^{l-1}}{(l-1)!}- \frac {e^{-\xi} \xi^{k-1}}{(k-1)!} \\
&=& 1+ k\sum_{j=k+1}^\infty \frac{\xi^j}{j!} - \sum_{j=k+1}^\infty \frac {\xi^j}{(j-1)!} =1+ \sum_{j=k+1}^\infty \frac{\xi^j}{(j-1)!}\Set{\frac kj-1} <1, 
\end{eqnarray*} a contradiction as $j>k,$ proving that $r(U_k) = r(F). $
\item[(b)] Observe that $\lim_{x\to  \infty} \frac {1-U_k(x)}{1-H_k(x)} $
\begin{eqnarray*}
&=&\lim_{x\to  \infty} \frac {1-kU_1(x)+F(x) \sum\limits_{l=1}^{k-1} (k-l) \dfrac {(-\ln F(x))^{l-1}}{l!}}{(1-F(x))^k} 
\\&=& \lim_{x\to  \infty} \frac {1-k\Set{\frac {1-F(x)}{-\ln F(x)}}+F(x) \sum\limits_{l=1}^{k-1} (k-l) \dfrac {(-\ln F(x))^{l-1}}{l!}}{(1-F(x))^k} 
\\&=& \lim_{x\to  \infty}\Set{\frac {1-F(x)}{-\ln F(x)}}  \frac {-\ln F(x) -k(1-F(x)) +F(x) \sum\limits_{l=1}^{k-1} (k-l) \dfrac {(-\ln F(x))^l}{l!}}{(1-F(x))^{k+1}} 
\\&=& \lim_{x\to  \infty} \frac {-\frac {f(x)}{ F(x)} +k f(x) + f(x) \sum\limits_{l=1}^{k-1} (k-l) \dfrac {(-\ln F(x))^l}{l!}- f(x) \sum\limits_{l=1}^{k-1} (k-l) \dfrac {(-\ln F(x))^{l-1}}{(l-1)!}}{-(k+1)f(x) (1-F(x))^k} 
\\&=& \lim_{x\to  \infty} \frac {-\frac 1{ F(x)}+\sum\limits_{l=0}^{k-1} (k-l) \dfrac {(-\ln F(x))^l}{l!}- \sum\limits_{l=0}^{k-1} (k-l-1) \dfrac {(-\ln F(x))^l}{l!}}{-(k+1)  (1-F(x))^k}
\\&=& \lim_{x\to  \infty} \frac 1{F(x)} \frac {1-F(x)\sum\limits_{l=0}^{k-1} \dfrac {(-\ln F(x))^l}{l!}}{(k+1)  (1-F(x))^k}
= \lim_{x\to  \infty} \frac 1{(k+1) F(x)} \lim_{x\to  \infty} \frac {1-F_k(x)}{(1-F(x))^k}
=\frac 1{(k+1)!}
\end{eqnarray*}
\end{enumerate}
The rest of proof is same as in Theorem \ref{Thm-Fk} except that $\;k!\;$ is replaced by  $\;(k+1)!\;$ 
\end{proof}

\begin{proof}[\textbf{Proof of Theorem \ref{Thm_FkB}:}] \begin{itemize}
\item[(a)]
 Let $\;D^r=\dfrac{d^r}{d(p_n F(x))^r},\, r=1,2,3,\ldots.\;$ Considering the term corresponding to $\;i=0\;$ in (\ref{eqn:gr-max1}), we have 
\begin{eqnarray*} \sum_{r=m}^{m+n}  F^r(x)  \binom n{r-m}p_n^{r-m}q_n^{m+n-r} & = & F^m(x) \sum_{r=m}^{m+n} \binom n{r-m}(p_nF(x))^{r-m}q_n^{m+n-r}, \\ & = & F^m(x) (p_nF(x)+q_n)^n. \end{eqnarray*}
By Leibnitz rule for derivative of the product, we have 
\[ D^i \left((p_n F (x))^m (p_nF(x)+q_n)^n\right) = i! \sum_{l=0}^i \binom ml \binom n{i-l}(p_n F (x))^{m-l} (p_nF(x)+q_n)^{n-i+l}.  \]
For $\;1 \leq i \leq k-1, \;$ in (\ref{eqn:gr-max1}), we now have 
\begin{eqnarray*}
& &\sum_{r=m}^{m+n}\binom ri  F^{r-i}(x) (1-F(x))^i \binom n{r-m}p_n^{r-m}q_n^{m+n-r}
\\&=& \dfrac{F^{m-i}(x)(1-F(x))^i}{i!} \sum_{r=m}^{m+n} \binom n{r-m} \dfrac {r!}{(r-i)!} (p_n F (x))^{r-m}q_n^{m+n-r},
\\&=& \dfrac{F^{m-i}(x)(1-F(x))^i}{i!} (p_n F (x))^{i-m}\sum_{r=m}^{m+n} \binom n{r-m} \dfrac {r!}{(r-i)!} (p_n F (x))^{r-i}q_n^{m+n-r},
\\&=& \dfrac{F^{m-i}(x)(1-F(x))^i}{i!} (p_n F (x))^{i-m}\sum_{r=m}^{m+n} \binom n{r-m} D^i (p_n F (x))^r q_n^{m+n-r},
\\&=& \dfrac{F^{m-i}(x)(1-F(x))^i}{i!} (p_n F (x))^{i-m} D^i \sum_{r=m}^{m+n} \binom n{r-m} (p_n F (x))^r q_n^{m+n-r},
\end{eqnarray*}
\begin{eqnarray*}
&=& \dfrac{F^{m-i}(x)(1-F(x))^i}{i!} (p_n F (x))^{i-m} D^i \left( (p_n F (x))^m (p_nF(x)+q_n)^n\right).
\\&& F^{m-i}(x)(1-F(x))^i \sum_{l=0}^i \binom ml \binom n{i-l}( p_n F (x))^{i-l} (p_nF(x)+q_n)^{n-i+l}
\end{eqnarray*}
Substituting all these expressions, we get 
\begin{eqnarray*}
F_{k:N_n}(x) &=& F^m(x) (p_nF(x)+q_n)^n+\sum_{i=1}^{k-1} (1-F(x))^i \binom ni p_n^i F^m(x) (p_nF(x)+q_n)^{n-i}\\&&+\sum_{i=1}^{k-1} (1-F(x))^i \sum_{l=1}^i \binom ml \binom n{i-l}p_n^{i-l}F^{m-l}(x)(p_nF(x)+q_n)^{n-i+l},  
\end{eqnarray*}
\begin{eqnarray*}
\\&=&  F^m(x) (p_nF(x)+q_n)^n+\sum_{i=1} ^{k-1} (1-F(x))^i \dfrac {n!}{(n-i)!i!} p_n^i F^m(x) (p_nF(x)+q_n)^{n-i} 
\\&& +\sum_{i=1}^{k-1} (1-F(x))^i \sum_{l=1}^i \binom ml \dfrac {n!}{(i-l)!(n-i+l)!} p_n^{i-l}F^{m-l}(x)(p_nF(x)+q_n)^{n-i+l},  
\\&=&  F^m(x) (p_nF(x)+q_n)^n+\sum_{i=1}^{k-1} \dfrac {(n(1-F(x)))^i}{i!} \left(\prod_{j=0}^{l-1} \dfrac {n-j}n\right) p_n^i F^m(x) (p_nF(x)+q_n)^{n-i} 
\\&& +\sum_{i=1}^{k-1} \sum_{l=1}^i \dfrac {(n(1-F(x)))^i}{n^l(i-l)!} \binom ml \left(\prod_{j=0}^{i-l-1}  \dfrac {n-j}{n}\right) p_n^{i-l}F^{m-l}(x)(p_nF(x)+q_n)^{n-i+l}.  
\end{eqnarray*}
Replacing $\;x\;$ by $\;a_nx + b_n\;$ above and using the facts that $\;  \lim_{n \rightarrow \infty}p_n=1$  and 
 \[ \lim_{n \rightarrow \infty} (q_n+p_nF(a_n x + b_n))^n=\lim_{n \rightarrow \infty} \left(1-\dfrac {np_n (1-F(a_n x + b_n))}{n}\right)^n=e^{-(-\ln G(x))}=G(x), \] we get 
$\; F_k(x) = G(x) +G(x) \sum_{i=1}^{k-1} \frac {(-\ln G(x))^i}{i!} +\sum_{i=1}^{k-1} \sum_{l=1}^i 0 = G(x) \sum_{i=0}^{k-1} \dfrac {(-\ln G(x))^i}{i!}.$
This completes the proof of (a). 
\item[(b)] Let $\;D^r=\dfrac{d^r}{d(\lambda_n F(x))^r},\, r=1,2,3,\ldots.\;$ Considering the term corresponding to $\;i=0\;$ in (\ref{eqn:gr-max1}), we have 
\[ \sum_{r=m}^\infty F^r(x)\frac {e^{-\lambda_n} \lambda_n^{r-m}}{(r-m)!} = e^{-\lambda_n} F^m(x) \sum_{r=m}^\infty \frac{(\lambda_n F (x))^{r-m}}{(r-m)!} =e^{-\lambda_n(1-F(x)} F^m(x). \]
For $\;1 \leq i \leq k-1, \;$ in (\ref{eqn:gr-max1}), we now have 
\begin{eqnarray*}
& &\sum_{r=m}^\infty \binom ri  F^{r-i}(x)(1-F(x))^i \dfrac {e^{-\lambda_n} \lambda_n^{r-m}}{(r-m)!}, 
\\&=& \dfrac{e^{-\lambda_n}F^{m-i}(x)(1-F(x))^i}{i!} \sum_{r=m}^\infty \dfrac {r!}{(r-i)!} \dfrac{(\lambda_n F (x))^{r-m}}{(r-m)!},
\\&=& \dfrac{e^{-\lambda_n}F^{m-i}(x)(1-F(x))^i}{i!} (\lambda_n F (x))^{i-m} \sum_{r=m}^\infty \dfrac {r!}{(r-i)!} \dfrac{( \lambda_n F (x))^{r-i}}{(r-m)!},
\\&=& \dfrac{e^{-\lambda_n}F^{m-i}(x)(1-F(x))^i}{i!} (\lambda_n F (x))^{i-m} \sum_{r=m}^\infty \dfrac 1{(r-m)!}D^i (\lambda_n F (x))^r,
\\&=& \dfrac{e^{-\lambda_n}F^{m-i}(x)(1-F(x))^i}{i!} (\lambda_n F (x))^{i-m} D^i\left(\left( \lambda_n F (x) \right)^m e^{\lambda_nF(x)}\right),
\end{eqnarray*}
\begin{eqnarray*}
&=& \dfrac{e^{-\lambda_n}F^{m-i}(x)(1-F(x))^i}{i!} (\lambda_n F (x))^{i-m} e^{\lambda_n F (x) }\sum_{l=0}^i \binom il \dfrac {m!}{(m-l)!} ({\lambda_n F (x) })^{m-l},\\&=& \dfrac{e^{-\lambda_n(1-F(x)}F^{m-i}(x)(1-F(x))^i}{i!} \sum_{l=0}^i \binom il \dfrac {m!}{(m-l)!} (\lambda_n F (x))^{i-l},
\\&=& \dfrac{e^{-\lambda_n(1-F(x)}F^{m-i}(x)(n(1-F(x)))^i}{i!} \left({\dfrac {\lambda_n}n F (x) }\right)^i + \\&& \dfrac{e^{-\lambda_n(1-F(x)}F^{m-i}(x)(n(1-F(x)))^i}{i!} \sum_{l=1}^i \binom il \dfrac {m!}{(m-l)!} \dfrac {(\lambda_n F (x))^{i-l}}{n^i}.
\end{eqnarray*}
Substituting all these expressions in equation (\ref{eqn:gr-max1}), we get 
\begin{eqnarray*}
F_{k:N_n}(x)&=& e^{-\lambda_n(1-F(x)} F^m(x) + \sum_{i=1}^{k-1}  \dfrac{e^{-\lambda_n(1-F(x)}F^{m-i}(x)(n(1-F(x)))^i}{i!} \left(\dfrac {\lambda_n}n F (x) \right)^i +\\&& \sum_{i=1}^{k-1} \dfrac{e^{-\lambda_n(1-F(x)}F^{m-i}(x)(n(1-F(x)))^i}{i!} \sum_{l=1}^i \binom il \frac {m!}{(m-l)!} \dfrac {(\lambda_n F (x))^{i-l}}{n^i}.
\end{eqnarray*}
Replacing $\;x\;$ by $\;a_n x + b_n\;$ above and using the facts and $\;\lim\limits_{n \rightarrow \infty} e^{-\lambda_n(1-F(a_nx+b_n)} = G(x),\;$  we get 
\begin{eqnarray*}
F_{k}(x)&=& G(x)+\sum_{i=1}^{k-1} \frac {G(x) \set{-\ln G(x)}^i}{i!} (1) + \sum_{i=1}^{k-1} \frac {G(x) \set{-\ln G(x)}^i}{i!} \sum_{l=1}^i (0)
\\ &=& G(x)\sum_{r=0}^{k-1} \left(\dfrac {(-\ln G(x))^r}{r!}\right), \;\;\text{completing the proof of (b). }
\end{eqnarray*} 
\item[(c)] Let $\;D^r=\dfrac{d^r}{d(F(x))^r},\, r=1,2,3,\ldots.\;$ Considering the term corresponding to $\;i=0\;$ in (\ref{eqn:gr-max1}), we have 
\[ \sum_{r=m}^\infty F^r(x)\frac {e^{-\lambda_n} \lambda_n^{r-m}}{(r-m)!} = e^{-\lambda_n} F^m(x) \sum_{r=m}^\infty \frac{(\lambda_n F (x))^{r-m}}{(r-m)!} =e^{-\lambda_n(1-F(x)} F^m(x). \]
For $\;1 \leq i \leq k-1, \;$ in (\ref{eqn:gr-max1}), we now have 
\begin{eqnarray*}
 \sum_{r=n+1}^\infty F^r(x) \Set{\dfrac 1{-\ln (1-\theta_n)}} \dfrac {\theta_n^{r-n}}{r-n} 
 &=&  \Set{\dfrac 1{-\ln (1-\theta_n)}} F^n(x) \sum_{r=n+1}^\infty \frac {(\theta_nF(x))^{r-n}} {r-n}  \\&=&  \Set{\dfrac 1{-\ln (1-\theta_n)}} F^n(x) \Set{-\ln (1-\theta_nF(x))}
\end{eqnarray*}
By Leibnitz rule we have 
\begin{eqnarray*} && D^i  \set{ - F^n(x)  \ln(1-\theta_nF(x))} 
\\&=& -\frac {n!}{(n-i)!} F^{n-i}(x) \ln(1-\theta_nF(x)) -\sum_{l=0}^{i-1} \binom il \frac {n!}{(n-l)!} F^{n-l}(x) \frac {(-1)^{i-l-1}(i-l-1)!(-\theta_n)^{i-l}}{(1-\theta_nF(x))^{i-l}}
\end{eqnarray*}
Now consider the general $i$th term for $i\ge 1.$
\begin{eqnarray*}
&& \sum_{r=n+1}^\infty \binom ri  F^{r-i}(x) \set{1-F(x)}^i \Set{\dfrac 1{-\ln (1-\theta_n)}} \dfrac {\theta_n^{r-n}}{r-n} 
\\&=& \set{1-F(x)}^i \Set{\dfrac 1{-\ln (1-\theta_n)}} \sum_{r=n+1}^\infty \frac {r!}{i!(r-i)!} F^{r-i}(x) \frac {\theta_n^{r-n}}{r-n} 
\end{eqnarray*}
\begin{eqnarray*}
\\&=& \frac {\set{1-F(x)}^i}{i!} \Set{\dfrac 1{-\ln (1-\theta_n)}} D^i  \set{ - F^n(x)  \ln(1-\theta_nF(x))}
\\&=&   -\frac {\set{1-F(x)}^i}{i!} \Set{\dfrac 1{-\ln (1-\theta_n)}}  \frac {n!}{(n-i)!} F^{n-i}(x) \ln(1-\theta_nF(x)) \\&& -\frac {\set{1-F(x)}^i}{i!} \Set{\dfrac 1{-\ln (1-\theta_n)}} \sum_{l=0}^{i-1} \binom il \frac {n!}{(n-l)!} F^{n-l}(x) \frac {(-1)^{i-l-1}(i-l-1)!(-\theta_n)^{i-l}}{(1-\theta_nF(x))^{i-l}} 
\end{eqnarray*}
Substituting all these expressions in equation (\ref{eqn:gr-max1}), we get 
\begin{eqnarray*}
F_{k:N_n}&=&  \Set{\dfrac {-\ln (1-\theta_nF(x))}{-F(x) \ln (1-\theta_n)}} F^{n+1}(x) + \sum_{i=1}^{k-1}\frac {\set{1-F(x)}^i}{i!} \Set{\dfrac {-\ln(1-\theta_nF(x)}{-F(x) \ln (1-\theta_n)}}  \frac {n!}{(n-i)!} F^{n-i+1}(x)  -\\&& \sum_{i=1}^{k-1} \frac {\set{1-F(x)}^i}{i!} \Set{\dfrac 1{-\ln (1-\theta_n)}} \sum_{l=0}^{i-1} \binom il \frac {n!}{(n-l)!} F^{n-l}(x) \frac {(-1)^{i-l-1}(i-l-1)!(-\theta_n)^{i-l}}{(1-\theta_nF(x))^{i-l}} 
\end{eqnarray*}
Since  $\lim_{n\rightarrow \infty } \Set{\dfrac {-\ln (1-\theta_nF(a_nx+b_n))}{-F(a_nx+b_n) \ln (1-\theta_n)}}=1,$  we get 
\[ F_k(x) =  \lim_{n\rightarrow\infty} F_{k:N_n}( a_n x + b_n ) = G(x)+\sum_{i=1}^{k-1} \frac {(-\ln G(x))^i}{i!} -\sum_{i=1}^{k-1} 0=G(x)+\sum_{i=1}^{k-1} \frac {(-\ln G(x))^i}{i!}\]
This completes the proof of (c). 
\end{itemize}
\end{proof}

\begin{proof} [\textbf{Proof of Theorem \ref{Thm_FkG}:}] 
 Let $\;D^r=\dfrac{d^r}{d(q_n F(x))^r},\, r=1,2,3,\ldots.\;$ Considering the term corresponding to $\;i=0\;$ in (\ref{eqn:gr-max1}), we have 
\[ \sum_{r=m}^\infty F^r(x) p_nq_n^{r-m}=p_n F^m(x) \sum_{r=m}^\infty  (q_n F (x))^{r-m}= \left( \frac{p_nF^m(x)}{1-q_n F(x)}\right).\]
Applying (a) of Lemma \ref{lem:lr}, we get  $\;D^i\left(\frac {(q_nF(x))^m}{1-q_n F(x)}\right) = i!   \sum_{l=0}^i  \binom ml  \frac {(q_nF(x))^{m-l}}{(1-q_nF(x))^{i+1-l}}.$
For $\;1 \leq i \leq k-1, \;$ in (\ref{eqn:gr-max1}), we now have $\sum_{r=m}^\infty \binom ri  F^{r-i}(x)  (1-F(x))^i  p_n q_n^{r-m}$ 
\begin{eqnarray*}
&=& \frac{p_n F^{m-i}(x)(1-F(x))^i}{i!} (q_n F (x))^{i-m}\sum_{r=m}^\infty  \frac {r!}{(r-i)!} (q_n F (x))^{r-i},
\\&=& \frac{p_n F^{m-i}(x)(1-F(x))^i}{i!} ( q_n F (x))^{i-m}\sum_{r=m}^\infty D^i ( q_n F (x))^r,
\\ &=& \frac{p_n F^{m-i}(x)(1-F(x))^i}{i!} (q_n F (x))^{i-m} D^i \sum_{r=m}^\infty  (q_n F (x))^r,
\\&=& \frac{p_n F^{m-i}(x)(1-F(x))^i}{i!} (q_n F (x))^{i-m} D^i \left( \frac{(q_n F (x))^m}{1-q_nF(x)}\right),
\\&=& \frac {p_n F^{m-i}(x)(1-F(x))^i}{i!} (q_n F (x))^{i-m} i!   \sum_{l=0}^i  \binom ml  \frac {(q_nF(x))^{m-l}}{(1-q_nF(x))^{i+1-l}} \;\mbox{ by Lemma \ref{lem:lr},  }
\\&=& p_n F^{m-i}(x)(1-F(x))^i  \sum_{l=0}^i \binom ml \frac { (q_nF(x))^{i-l}}{(1-q_nF(x))^{i+1-l}},
\\&=& \frac{p_nq_n^i F^m(x)(1-F(x))^i}{(1-q_nF(x))^{i+1}} +\sum_{l=1}^i \binom ml \frac {p_n q_n^{i-l}F^{m-l}(x)(1-F(x))^i}{(1-q_nF(x))^{i+1-l}}.
\end{eqnarray*}
Substituting all these expressions in equation (\ref{eqn:gr-max1}), we get 
\begin{eqnarray*}
F_{k:N_n}(x) &=& P(X_{k:N_n} \le x ) \\&=&
\frac {p_n F^m(x) }{1-q_n F(x)}+ \sum_{i=1}^{k-1}\left(\frac{p_nq_n^i F^m(x)(1-F(x))^i}{(1-q_nF(x))^{i+1}} +\sum_{l=1}^i \binom ml \frac {p_n q_n^{i-l}F^{m-l}(x)(1-F(x))^i}{(1-q_nF(x))^{i+1-l}}\right).
\end{eqnarray*}
Replacing $\;x\;$ by $\;a_n x + b_n\;$ above and using the facts 
\begin{eqnarray*} \lim_{n \rightarrow \infty} np_n& = &1, \;\; \lim_{n \rightarrow \infty} q_n^k=  \lim_{n \rightarrow \infty} \left(1-\dfrac {np_n}n\right)^k  =1, \\   \lim_{n \rightarrow \infty} n(1-q_nF(a_n x + b_n)) & = &\lim_{n \rightarrow \infty}  n(p_nF(a_n x + b_n)+1-F(a_n x + b_n))= 1-\ln G(x), \end{eqnarray*} we get 
\begin{eqnarray*}
L_k(x)&=& \lim_{n\rightarrow\infty} \frac {np_n F^m(a_n x + b_n) }{n(1-q_n F(a_n x + b_n))}+\lim_{n\rightarrow\infty} \sum_{i=1}^{k-1} \frac{np_nq_n^i F^m(a_n x + b_n)(n(1-F(a_n x + b_n)))^i}{n^{i+1}(1-q_nF(a_n x + b_n))^{i+1}}+\\&&\lim_{n\rightarrow\infty} \sum_{i=1}^{k-1} \sum_{l=1}^i \binom ml \frac {np_n q_n^{i-l}F^{m-l}(a_n x + b_n)(n(1-F(a_n x + b_n)))^i}{n^{i+1}(1-q_nF(x))^{i+1-l}}, 
\\&=& \frac 1{1-\ln G(x)} + \sum_{i=1}^{k-1} \frac {(-\ln G(x))^i}{(1-\ln G(x))^{i+1}}+ \sum_{i=1}^{k-1} 0 
=  \frac 1{1-\ln G(x)} \sum_{i=1}^k \left( \frac {-\ln G(x)}{1-\ln G(x)}\right)^{i-1}, 
\end{eqnarray*}
$\;=1-\left( \frac {-\ln G(x)}{1-\ln G(x)}\right)^k,\;$ completing the proof.
\end{proof}

\begin{proof} [\textbf{Proof of Theorem \ref{Thm-FkG2}:}]
For $\;-\infty < x<y < \infty,\;$ we have $\;F(x) \le F(y),\;$ so that  
\begin{eqnarray*}
&&1-\ln F(x) \ge 1-\ln F(y) \Rightarrow 1-\dfrac 1{1-\ln F(x)} \ge 1-\dfrac 1{1-\ln F(y)} \\ &\Rightarrow&
 \left(\frac {-\ln F(x)} {1-\ln F(x)}\right)^k \ge \left(\frac {-\ln F(y)} {1-\ln F(y)}\right)^k \Rightarrow R_k(x) \le R_k(y),
\end{eqnarray*}
showing that $\;R_k\;$ is non-decreasing. Since $\;F\;$ is right continuous, we have 
\[ \lim_{\epsilon \rightarrow 0} R_k(x+\epsilon)=\lim_{\epsilon \rightarrow 0} \left(1-\left(\dfrac {-\ln F(x+\epsilon)} {1-\ln F(x+\epsilon)}\right)^k\right)=R_k(x),\] so that $\;R_k\;$ is right continuous. Further $\;R_k(+\infty)=1\;$ and $\;R_k(-\infty)=0,\;$ and hence $\;R_k\;$ is a df. And, if $\;F\;$ is absolutely continuous with pdf $\;f,\;$ then $\;R_k\;$ is absolutely continuous with pdf $\;r_k(x)=\dfrac {kf(x) (-\ln F(x))^{k-1}}{F(x) (1-\ln F(x))^{k+1}}.\;$ 
Note that $\; \lim_{x\rightarrow r(F)} R_k(x)=1,\;$ and hence $\;r(R_k)\le r(F) \le \infty.\;$ If possible let $\;r(R_k)<r(F).\;$ Then 
\begin{eqnarray*} & & F(r(R_k)) < 1 \Rightarrow 1 -\ln F(r(R_k)) >1 \Rightarrow  \dfrac 1{1 -\ln F(r(R_k))} <1 \Rightarrow  1-\dfrac 1{1 -\ln F(r(R_k))} >0
\\&\Rightarrow&  \dfrac {-\ln F(r(R_k))}{1 -\ln F(r(R_k))} >0 \Rightarrow 1-\left( \dfrac {-\ln F(r(R_k))}{1 -\ln F(r(R_k))}\right)^k <1 \Rightarrow R_k(r(R_k))<1,
\end{eqnarray*} 
which is a contradiction, proving $\;r(R_k)=r(F).$ 
We have  \[ \lim_{x\rightarrow \infty} \frac {1-R_k(x)}{1-H_k(x)}= \lim_{x\rightarrow \infty} \left( \dfrac {-\ln F(x)}{1-F(x)}\right)^k \dfrac 1{(1-\ln F(x))^k} = 1,\] so that $\;1-R_k \sim 1-H_k,\;$ or $\;R_k\;$ is tail equivalent to $\;H_k,\;$ proving (a).
Because of the tail equivalence of $\;R_k\;$ and $\;H_k,\;$ (b), (c) and (d) follow from the corresponding results in Theorem \ref{Thm-Fk}.
\end{proof}

\begin{proof} [\textbf{Proof of Theorem \ref{Thm-FkG3}:}]
With $\;W(x)=R_k(x),\;$ we have 
\[ \dfrac {dR_k(x)}{dx}=r_k(x)=\dfrac {kf(x) (-\ln F(x))^{k-1}}{F(x) (1-\ln F(x))^{k+1}}=(1-R_k(x)) h_1(x),\] where $\;h_1(x)=\dfrac {-k f(x)}{F(x) (1-\ln F(x)) \ln F(x)}>0,$ so that (\ref{eqn:burr}) is satisfied and hence $\;R_k\;$ belongs to the Burr family.
\end{proof}

\begin{proof} [\textbf{Proof of Theorem \ref{Thm-FkNB1}:}]
Let $\;D^r=\dfrac{d^r}{d(q_n F(x))^r},\, r=1,2,3,\ldots.\;$ Note that 
\[ \sum_{l=m}^\infty \binom {l-m+r-1}{l-m} q_n^{l-m}=(1-q_n)^{-r}   \text{  and  } \sum_{l=m}^\infty \binom {l-m+r-1}{l-m} q_n^l=q_n^m (1-q_n)^{-r}.\]
Considering the term corresponding to $\;i=0\;$ in (\ref{eqn:gr-max1}), we have 
\[ \sum_{l=m}^\infty  F^l(x)  \binom {l-m+r-1}{l-m} p_n^r q_n^{l-m}= p_n^r F^m(x) \sum_{l=m}^\infty \binom {l-m+r-1}{l-m} (q_n F (x))^{l-m}=\dfrac {p_n^rF^m(x)} {(1-q_n F(x))^r}.\]
For $\;1 \leq i \leq k-1, \;$ in (\ref{eqn:gr-max1}),  we now have 
\begin{eqnarray*}
& &\sum_{l=m}^\infty \binom li  F^{l-i}(x) (1-F(x))^i \binom {l-m+r-1}{l-m} p_n^r q_n^{l-m}
\\&=& \dfrac{p_n^r F^{m-i}(x)(1-F(x))^i}{i!} (q_n F (x))^{i-m}\sum_{l=m}^\infty \binom {l-m+r-1}{l-m} \dfrac {l!} {(l-i)!} (q_n F (x))^{l-i},
\\&=& \dfrac{p_n^r F^{m-i}(x)(1-F(x))^i}{i!} (q_n F (x))^{i-m}\sum_{l=m}^\infty \binom {l-m+r-1}{l-m} D^i (q_n F (x))^l,
\\&=& \dfrac{p_n^r F^{m-i}(x)(1-F(x))^i}{i!} (q_n F (x))^{i-m} D^i\left(\dfrac {(q_nF(x))^m}{(1-q_n F(x))^r}\right),
\\&=& \dfrac{p_n^r F^{m-i}(x)(1-F(x))^i}{i!} i! (q_n F (x))^{i-m}\sum_{l=0}^i \binom m{i-l} \binom {r-1+l}l \dfrac {(q_nF(x))^{m-i+l}}{(1-q_nF(x))^{r+l}},
\\&=& p_n^r (1-F(x))^i \sum_{l=0}^i \binom m{i-l} \binom {r-1+l}l \dfrac {(q_nF(x))^l F^{m-i}(x)}{(1-q_nF(x))^{r+l}}, 
\\&=& p_n^r (1-F(x))^i \sum_{l=0}^i \binom m{i-l} \binom {r-1+l}l \dfrac {q_n^l F^{m-i+l}(x)}{(1-q_nF(x))^{r+l}}, 
\\&=& p_n^r (1-F(x))^i \sum_{l=0}^{i-1} \binom m{i-l} \binom {r-1+l}l \dfrac {q_n^l F^{m-i+l}(x)}{(1-q_nF(x))^{r+l}} + \\&& (np_n)^r (n(1-F(x)))^i \binom {r+i-1}i\dfrac {q_n^iF^m(x)}{(n(1-q_nF(x)))^{r+i}}.
\end{eqnarray*}
Substituting we have 
\begin{eqnarray*}
F_{k:N_n}(x) &=&\dfrac {p_n^rF^m(x)} {(1-q_n F(x))^r}+\sum_{i=1}^{k-1} p_n^r (1-F(x))^i \sum_{l=0}^{i-1} \binom m{i-l} \binom {r-1+l}l \dfrac {q_n^l F^{m-i+l}(x)}{(1-q_nF(x))^{r+l}} + \\&&\sum_{i=1}^{k-1}  (np_n)^r (n(1-F(x)))^i \binom {r+i-1}i\dfrac {q_n^iF^m(x)}{(n(1-q_nF(x)))^{r+i}}. 
\end{eqnarray*}
Replacing $\;x\;$ by $\;a_n x + b_n\;$ above and using the facts 
$\;\lim_{n \rightarrow \infty} np_n =1, \;$  $\lim_{n \rightarrow \infty} F(a_n x + b_n) = 1,\;$ $\;\lim_{n \rightarrow \infty} n(1-F(a_n x + b_n))=  -\ln G(x),\;$ $\;\lim_{n\rightarrow \infty} n(1-q_nF(a_n x + b_n))= 1-\ln G(x),\;$ and $\;\lim_{n\rightarrow \infty} q_n^k = \lim_{n\rightarrow \infty} (1-\dfrac {np_n}n)^k = 1,\;$ we get 
\begin{eqnarray*}
S_k(x)&=&  \dfrac 1{(1-\ln G(x))^{r}}+\sum_{i=1}^{k-1}  0 + \sum_{i=1}^{k-1}  \binom {r+i-1}i \dfrac {(-\ln G(x))^i}{(1-\ln G(x))^{r+i}},
\\&=& \sum_{i=0}^{k-1}  \binom {r+i-1}i \dfrac {(-\ln G(x))^i}{(1-\ln G(x))^{r+i}},
\end{eqnarray*}
completing the proof.
\end{proof}

\begin{proof} [\textbf{Proof of Theorem  \ref{Thm_FkNB2}:}]
The proof for the relation (\ref{eqn:recdf1}) is evident from the definition of $\;T_k\;$ since  $\; T_{k+1}(x)- T_k(x)=\binom {k+r-1}k  \dfrac {(-\ln F(x))^k}{(1-\ln F(x))^{k+r}}.$ The proof of (\ref{eqn:recpdf1}) is by induction.\\ For $\;k=1,\;$ the df $\;T_2(x)=\dfrac 1{(1-\ln F(x))^r}+\binom r1 \dfrac {(-\ln F(x))}{(1-\ln F(x))^{r+1}}. \;$ Hence the pdf becomes 
\begin{eqnarray*}
t_2(x)&=&\dfrac {r f(x)}{F(x)} \dfrac 1{(1-\ln F(x))^{r+1}} +\binom r1  (-\frac {f(x)}{F(x)}) \left(\dfrac 1{(1-\ln F(x))^{r+1}}-\dfrac {(r+1)(-\ln F(x))}{(1-\ln F(x))^{r+2}}\right),
 \\&=&\binom r1  \left(\dfrac {f(x)}{F(x)}\right)\dfrac {(r+1)(-\ln F(x))}{(1-\ln F(x))^{r+2}} =\dfrac 1{B(r,2)} \dfrac {f(x)}{F(x)}\dfrac {(-\ln F(x))}{(1-\ln F(x))^{r+2}}
 \end{eqnarray*}
which agrees with (\ref{eqn:recpdf1}) for $\;k=1.\;$ 
Assuming that the relation (\ref{eqn:recpdf1}) is true for $\;k=s,\;$ 
we have $\;T_{s+1}(x)=T_s(x)+\binom {s+r-1}s  \dfrac {(-\ln F(x))^s}{(1-\ln F(x))^{s+r}}\;$  with  $\;t_{s+1}(x)=\dfrac 1{B(r,s+1)} \dfrac {f(x)}{F(x)}  \dfrac {(-\ln F(x))^s}{(1-\ln F(x))^{r+s+1}}.\;$ Now consider $\;T_{s+2}(x)=T_{s+1}(x)+\binom {s+r}{s+1}  \dfrac {(-\ln F(x))^{s+1}}{(1-\ln F(x))^{s+1+r}}.$\\
Differentiating with respect to $\;x\;$ we get 
\begin{eqnarray*}
&& t_{s+2}(x)-t_{s+1}(x)= \binom {s+r}{s+1} \left(-\dfrac {f(x)}{F(x)}\right) \left(\dfrac {(s+1)(-\ln F(x))^s}{(1-\ln F(x))^{s+1+r}}-\dfrac {(s+r+1)(-\ln F(x))^{s+1}}{(1-\ln F(x))^{s+r+2}}\right),
\\&=& - (s+1) \binom {s+r}{s+1} \dfrac {f(x)}{F(x)}\dfrac {(-\ln F(x))^s}{(1-\ln F(x))^{s+1+r}} +(s+2)\dfrac {s+r+1}{s+2}\binom {s+r}{s+1} \dfrac {f(x)}{F(x)} \dfrac {(-\ln F(x))^{s+1}}{(1-\ln F(x))^{s+r+2}},
\\&=& -\dfrac 1{B(r,s+1)}\dfrac {f(x)}{F(x)}\dfrac {(-\ln F(x))^s}{(1-\ln F(x))^{s+1+r}}+ (s+2)\binom {s+r+1}{s+2} \dfrac {f(x)}{F(x)} \dfrac {(-\ln F(x))^{s+1}}{(1-\ln F(x))^{s+r+2}},
\\&=&\dfrac 1{B(r,s+2)}\dfrac {f(x)}{F(x)} \dfrac {(-\ln F(x))^{s+1}}{(1-\ln F(x))^{s+r+2}}.
\end{eqnarray*}
Thus the relation (\ref{eqn:recpdf1}) follows. Note that $\;t_{k+1}(x)>0.\;$ Hence the proof.
\end{proof}

\begin{proof} [\textbf{Proof of Theorem \ref{Thm_FkNB3}:}]
\begin{enumerate}
\item[(a)] By Theorem \ref{Thm_FkNB2}, $\;T_k\;$ is a df with pdf  $\;t_k(x)= $ \\ $ \dfrac 1{B(r,k)} \dfrac {f(x)}{F(x)}  \dfrac {(-\ln F(x))^{k-1}}{(1-\ln F(x))^{r+k}}.\;$ Note that $\;\lim_{x\rightarrow r(F)} T_k(x)=$ \\
$\lim_{x\rightarrow r(F)}\left( \dfrac 1{(1-\ln F(x))^r} + \sum_{l=1}^{k-1} \binom {l+r-1}l  \dfrac {(-\ln F(x))^l}{(1-\ln F(x))^{l+r}}	\right)=1,\;$ so that $\;r(T_k) \le r(F) \le \infty.\;$  If possible, let $\;r(T_k)<r(F).\;$ 
Since $\;0<F(r(T_k))<F(r(F))=1,\;$ it follows that $\;0<\dfrac 1{1-\ln F(r(T_k))}<1\;$ and $\;0<\dfrac {-\ln F(r(T_k))} {1-\ln F(r(T_k))}<1.\;$ Then 
\[T_k(r(T_k))=\sum_{l=0}^{k-1}  \binom {l+r-1}l  \dfrac {(-\ln F(r(T_k)))^l}{(1-\ln F(r(T_k)))^{l+r}} <\sum_{l=0}^\infty \binom {l+r-1}l  \dfrac {(-\ln F(r(T_k)))^l}{(1-\ln F(r(T_k)))^{l+r}}=1,\]
which is a contradiction, proving that $\;r(T_k)=r(F).$ We have  
\[ \lim_{x\rightarrow \infty} \dfrac {1-T_k(x)}{1-H_k(x)}=\lim_{x\rightarrow \infty} \dfrac {-t_k(x)}{-k(1-F(x))^{k-1}f(x)}=\lim_{x\rightarrow \infty}  \dfrac 1{B(r,k)} \dfrac 1{kF(x)} \dfrac {u^{k-1}(x)}{(1-\ln F(x))^{r+k}}= \dfrac 1{kB(r,k)}.\]
Thus $\;1-T_k \sim 1-H_k,\;$ or $\;T_k\;$ is tail equivalent to $\;H_k.\;$
Because of the tail equivalence of $\;T_k\;$ and $\;H_k,\;$ (b), (c) and (d) follow from the corresponding results in Theorem \ref{Thm-Fk}. 
\end{enumerate}
\end{proof}

\begin{proof} [\textbf{Proof of Theorem \ref{Thm_FkBN}:}]
\begin{enumerate}
\item[(i)]  Clearly $B_k$ is right continuous with $\;B_k(+\infty)=1\;$ and $\;B_k(-\infty)=0.$ We have $\;B_k(r(F))= E_{\tau}(F^\tau(r(F))) + \sum_{i=1}^{k-1}\frac{(-\log F(r(F)))^i}{i!}E_{\tau}(\tau^i F^\tau(r(F)))=$ $E_\tau(1)+\sum_{i=1}^{k-1}0=1.\;$ Hence $r(B_k)\le r(F).$ If possible, let $r(B_k) < r(F).$ Then $F(r(B_k))<1$ and 
\begin{eqnarray*}
 B_k(r(B_k))&=&  \sum_{i=0}^{k-1}\frac{(-\log F(r(B_k)))^i}{i!} E_{\tau}(\tau^i F^\tau(r(B_k))) \\ 
 &=&  E_\tau\set{ \sum_{i=0}^{k-1}\dfrac{(-\tau \log F(r(B_k)))^i}{i!} F^\tau(r(B_k))} <E_\tau\set{ \sum_{i=0}^\infty \dfrac{(-\tau \log F(r(B_k)))^i}{i!} F^\tau(r(B_k))}
\\&=&  E_\tau \set{ \exp {(-\tau \log F(r(B_k)))} F^y(r(B_k))} = E_{\tau}(\tau^0) =1,
\end{eqnarray*} a contradiction proving that $\;r(B_k) = r(F).\;$ The recurrence for $B_{k+1}$ follows trivially by definition of $B_k.$ 
The pdf $b_{k+1}(x)$ is proved by induction on $k.$ Considering the case of $k=1,2,$ we have 
$B_1(x)=$ $  E_{\tau}(F^\tau(x)),\;$ $B_2(x)=$ $V_1(x)+\frac {(-\log F(x))^1}{1!}  E_{\tau}(\tau F^\tau(x)).\;$ Differentiating with respect to $x$ and taking derivative  under the expectation sign 
\begin{eqnarray*}
b_1(x)&=& E_{\tau}(\tau F^{\tau-1} (x)) f(x),
\\b_2(x)&=&v_1(x)+\frac {-f(x)}{F(x)} E_{\tau}(\tau F^\tau (x)) +\frac {(-\log F(x))^1}{1!}  E_{\tau}(\tau^2 F^{\tau-1} (x)) f(x),
\\&=& \frac {(-\log F(x))^1}{1!}  E_{\tau}(\tau^2 F^{\tau-1} (x)) f(x).
\end{eqnarray*}
Assuming the result to be true for $k,$ we have $b_k(x) = \dfrac{(-\log F(x))^{k-1}}{(k-1)!}E_{\tau}(\tau^k F^{\tau-1} (x)) f(x)$ and 
\begin{eqnarray*}
B_{k+1}(x)&=& B_k(x)+\dfrac{(-\log F(x))^k}{k!} E_{\tau}(\tau^k F^\tau(x)),\\
b_{k+1}(x)&=&b_k(x)+\frac {-(-\log F(x))^{k-1}f(x)}{(k-1)! F(x)} E_{\tau}(\tau^k F^\tau(x)) + \frac {(-\log F(x))^k}{k!} E_{\tau}(\tau^{k+1} F^{\tau-1} (x))f(x),
\\&=&  \frac {(-\log F(x))^k}{k!} E_{\tau}(\tau^{k+1} F^{\tau-1} (x))f(x).
\end{eqnarray*}
Hence the result. 
\item[(ii)] We have 
\begin{eqnarray*}
\lim_{x\to  \infty} \frac {1-B_k(x)}{(1-F(x))^k}&=&\lim_{x\to  \infty} \frac {-b_k(x)}{-kf(x) (1-F(x))^{k-1}},
\\&=& \lim_{x\to  \infty} \dfrac{(-\log F(x))^{k-1}}{(k-1)!k (1-F(x))^{k-1}} E_{\tau}(\tau^k F^{\tau-1} (x))f(x),
\\&=& \lim_{x\to  \infty} \dfrac{u^{k-1}(x)}{k!} E_{\tau}(\tau^k F^{\tau-1} (x))f(x)= \frac 1{k!} E_\tau(\tau^k \times 1) = \frac 1{k!}E_\tau(\tau^k).
\end{eqnarray*}
\end{enumerate}
\end{proof}

\begin{proof} [\textbf{Proof of Theorem \ref{Thm_StOr}:}]
 \begin{enumerate}

\item[(i)]  Proved in theorem (\ref{Thm-FkW}). 
\item[(ii)] Since $0\le F(x)\le 1$ it also follows that $0\le 1-F(x)\le 1.$ The results follow by observing that $\;\frac {1- H_{k+1}(x)}{1-H_k(x)}<1\;$, $\;\frac {1- R_{k+1}(x)}{1-R_k(x)}<1\;$, $\;F_{k+1}(x)-F_k(x)\ge0\;$ and $\;T_{k+1}(x)-T_k(x)\ge0.$
\item[(iii)] By recurrence relation we have $\displaystyle U_{k+1}(x)-U_k(x) = U_1(x) - F(x) \sum_{l=1}^k \frac {(-\ln F(x))^{l-1}}{l!}.$
Observe that 
\begin{eqnarray*}
&& F(x) \sum_{l=1}^{k-1} \frac {(-\ln F(x))^{l-1}}{l!} = \frac {F(x)}{-\ln F(x)} \sum_{l=1}^k \frac {(-\ln F(x))^l}{l!} < \frac {F(x)}{-\ln F(x)} \sum_{l=1}^\infty \frac {(-\ln F(x))^l}{l!} \\&=&  \frac {F(x)}{-\ln F(x)} \Set{e^{-ln F(x)}-1}=\frac {F(x)}{-\ln F(x)} \Set{\frac 1{ F(x)}-1} = \frac {1-F(x)}{-\ln F(x)}=U_1(x)
\end{eqnarray*}
Hence $\;U_{k+1}(x)-U_k(x) >0\;$ and result follows.\\ From (i) $U_1(x) \le_{\mbox{st}} F(x) \Rightarrow U_1(x) \ge F(x).$  Assume that $U_k(x) \ge F(x).$  Then by recurrence relation we have $U_{k+1}(x) > U_k(x) \ge F(x).$ 
\item[(iv)] From (i) $\;U_1(x)>F(x)\ \Rightarrow \frac 1{F(x)} -1 \ge -\ln F(x)$ it follows that 
 \begin{eqnarray*}
 && 1- \ln F(x) \le \frac 1{F(x)} \Rightarrow \frac 1{1- \ln F(x) } \ge F(x) \Rightarrow 1- \frac 1{1- \ln F(x) } \le 1-F(x) \\
 &\Rightarrow& \Set{\frac {-\ln F(x)}{1- \ln F(x)}}^k \le (1-F(x))^k \le 1-F(x) \Rightarrow 1-R_k(x) \le 1-H_k(x) \le 1-F(x)
 \end{eqnarray*}
 Hence $R_k(x) \le_{\mbox{st}} H_k(x) \le_{\mbox{st}} F(x).$
\item[(v)]  Since $\;1-\ln F(x) < \frac 1{F(x)}\;$
\begin{eqnarray*}
1-F_k(x)&=&1-F(x)\sum_{r=0}^{k-1}\Set{\frac {(-\ln F(x))^r}{r!}}=(1-F(x))-F(x)\sum_{r=1}^{k-1}\Set{\frac {(-\ln F(x))^r}{r!}}<1-F(x)
\\1-T_k(x)&=&1-\sum_{l=0}^{k-1} \binom {l+r-1}l  \frac {\set{-\ln F(x)}^l}{(1-\ln F(x))^{r+l}} 
\\&=& 1-\frac 1{(1-\ln F(x))^r}- \sum_{l=0}^{k-1} \binom {l+r-1}l  \frac {\set{-\ln F(x)}^l}{(1-\ln F(x))^{r+l}} < 1-\frac 1{(1-\ln F(x))^r} < 1-F^r(x)
\end{eqnarray*}
\item[(vi)] For $0<p<1$ and $r\ge 1$ such that $p+q=1,$  setting $p=\dfrac 1{1-\ln F(x)}$ it follows that $R_k(x)=1-q^k$ and $T_k(x)=\sum\limits_{l=0}^{k-1} \binom {l+r-1}l p^r q^l.$\\ Let $X\sim R_k(x)=1-q^k$ and $Y\sim T_k(x)=\sum\limits_{l=0}^{k-1} \binom {l+r-1}l p^r q^l.$\\ 
Since the sum of iid geometric rvs is a negative binomial rv it follows that $\set{X>x} \subset \set{Y>x}$ and hence $P(X>x) \le P(Y>x) \Rightarrow  1-R_k(x) \le 1-T_k(x)$
\item[(vii)] Setting $\xi=-\ln F(x)$ one gets $\;U_k(x)\;$
\begin{eqnarray*}
&=& k\Set{\frac {1-F(x)}{-\ln F(x)}} -F(x) \sum_{l=1}^{k-1} (k-l) \frac {(-\ln F(x))^{l-1}}{l!} 
= k\Set{\frac {1-e^{-\xi}}{\xi}}-\sum_{l=1}^{k-1} (k-l) \frac {e^{-\xi} \xi^{l-1}}{l!} 
\\&=& k\sum_{j=1}^\infty \frac{e^{-\xi} \xi^{j-1}}{j!}-\sum_{l=1}^{k-1} (k-l) \frac {e^{-\xi} \xi^{l-1}}{l!} = k\sum_{j=k}^\infty \frac{e^{-\xi} \xi^{j-1}}{j!}+\sum_{l=1}^{k-1} \frac {e^{-\xi} \xi^{l-1}}{(l-1)!} 
\end{eqnarray*}
\begin{eqnarray*}
=k\sum_{j=k+1}^\infty \frac{e^{-\xi} \xi^{j-1}}{j!}+\sum_{l=1}^k \frac {e^{-\xi} \xi^{l-1}}{(l-1)!} >\sum_{l=1}^k \frac {e^{-\xi} \xi^{l-1}}{(l-1)!} =\sum_{l=0}^{k-1} \frac {F(x) \set{-\ln F((x)}^l}{l!} =F_k(x)
\end{eqnarray*}

 \end{enumerate}
\end{proof}

\section{Appendix}
\begin{thm} \label{T-l-max}
\begin{enumerate}
\item[(a)] $F \in {\mathcal D}_{l}(\Phi_{\alpha})\;$ for some $\;\alpha > 0\;$ iff
$\;1 - F\;$ is regularly varying with exponent $\;- \alpha,\;$ that is, $\;\lim_{t \rightarrow \infty} \dfrac{1 - F(tx)}{1 - F(t)} = x^{- \alpha}, \, x > 0.\;$ In this case, one may choose $\;a_n = F^{-}\left(1 - \dfrac{1}{n}\right)\;$ and $\;b_n = 0\;$ so that (\ref{Introduction_e1}) holds with $G = \Phi_{\alpha}.$ 
\item[(b)] $F \in {\mathcal D}_{l}(\Psi_{\alpha})\;$ for some $\;\alpha > 0\;$ iff
$\;r(F) < \infty\;$ and $\;1 - F\left( r(F) - \frac{1}{.}\right)\;$ is regularly varying with exponent $\;- \alpha.\;$ In this case, one may choose $\;a_n = F^{-}\left(r(F) - \dfrac{1}{n}\right)\;$ and $\;b_n = r(F)\;$ so that (\ref{Introduction_e1}) holds with $G = \Psi_{\alpha}.$ 
\item[(c)] $F \in {\mathcal D}_{l}(\Lambda)\;$ iff there exists a positive function
$\;v\;$ such that $\;\lim_{t \uparrow r(F)} \dfrac{1 - F\left(t + v(t)x\right)}{1 - F(t)} = e^{-x}, \, x \geq 0, \, r(F) \leq \infty.\;$ If this condition holds for some $\;v,\;$ then \\$\;\int_{a}^{r(F)}\left(1 - F(s)\right) ds < \infty, \, a < r(F),\;$ and the condition holds with the choice $\;v(t) = \dfrac{\int_{t}^{r(F)}\left(1 - F(s)\right) ds}{\left(1 -
F(t)\right)}\;$ and one may choose $\;a_n = v(b_n)\;$ and $\;b_n = F^{-}\left(1 - \dfrac{1}{n}\right) \;$ so that (\ref{Introduction_e1}) holds with $G = \Lambda.$  One may also choose $\;a_n = F^{-}\left(1 - \dfrac{1}{ne}\right) - b_n, \, b_n = F^{-}\left(1 -
\dfrac{1}{n}\right). \;$ This condition was by Worm (1998). Also, $\;v\;$ may be taken as the mean residual life time of a rv $\;X\;$ given $\;X > t\;$ where $\;X\;$ has df $\;F.\;$ 
\item[(d)] (Proposition 1.1(a), Resnick, 1987) A df $\;F\;$ is called a von-Mises function if there exists $\;z < r(F)\;$ such that for constant $\;c > 0,\;$ and $\;z < x < r(F),\;$ $\;1-F(x)=$ $c \exp\left( -\int_{z}^{x}\dfrac{1}{f(u)}du\right), \;$ where $\;f(u) > 0,$ $z<u<r(F),\;$ and $\;f\;$ is absolutely continuous on $\;(z,r(F))\;$ with density $\;f^{\prime}(u)\;$ and $\;\lim_{x\uparrow r(F)} f^{\prime}(u) = 0,\;$ and $\;f\;$ is called the auxiliary function. If df $\;F\;$ is a von-Mises function, then $\;F \in D_l(\Lambda)\;$ with norming constants $\;a_n = f(b_n),$ $b_n = F^-\left(1-\dfrac{1}{n}\right),\;$ so that (\ref{Introduction_e1}) holds with $G = \Lambda.$ 
\item[(e)] (Proposition 1.1(b), Resnick, 1987) Suppose df $\;F\;$ is absolutely continuous with $\;F^{\prime\prime}(x) < 0, x \in (z, r(F)),\;$ and if 
\begin{equation}  \lim_{x\uparrow r(F)} \dfrac{(1-F(x))F^{\prime\prime}(x)}{(F^{\prime}(x))^2} = -1, \label{von-Mises} \end{equation} then $\;F\;$ is a von-Mises function and $\;F \in D_l(\Lambda),\;$ and we can choose auxiliary function as $\;f = \dfrac{1-F}{F^{\prime}}.\;$ Conversely, a twice differentiable von-Mises function satisfies $\;(\ref{von-Mises}).$
\end{enumerate}
\end{thm}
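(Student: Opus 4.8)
The plan is to reduce every assertion to the single tail condition (\ref{Introduction_e1.2}), namely $\lim_{n\to\infty}n\{1-F(a_nx+b_n)\}=-\log G(x)$, and then to invoke Karamata's theory of regular variation for the Fr\'echet and Weibull cases and de Haan's theory of $\Gamma$-variation for the Gumbel case; these are the classical results of Gnedenko and de Haan, and the point is to organize them around (\ref{Introduction_e1.2}).

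For (a), with $G=\Phi_\alpha$ one has $-\log G(x)=x^{-\alpha}$ and $b_n=0$, so (\ref{Introduction_e1.2}) becomes $n\{1-F(a_nx)\}\to x^{-\alpha}$. First I would set $U(t)=1/(1-F(t))$ and observe that this limit, taken at $x=1$, identifies $a_n$ with $U^-(n)=F^-(1-1/n)$, while dividing the limits at $x$ and at $1$ gives $U(a_nx)/U(a_n)\to x^{\alpha}$; the standard passage from sequential to continuous scaling (the uniform convergence theorem for regularly varying functions) then upgrades this to $1-F\in RV_{-\alpha}$. Conversely, if $1-F\in RV_{-\alpha}$ I would take $a_n=F^-(1-1/n)$, note that $a_n\in RV_{1/\alpha}$ by inversion of regularly varying functions, and verify directly that $n\{1-F(a_nx)\}\to x^{-\alpha}$, giving $F\in\mathcal{D}_l(\Phi_\alpha)$.

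Part (b) I would obtain from (a) by the reflection $x\mapsto r(F)-1/x$: since $r(F)<\infty$, the function $1-F(r(F)-1/\cdot)$ serves as a tail at $+\infty$, and $F\in\mathcal{D}_l(\Psi_\alpha)$ is equivalent to this reflected tail lying in $RV_{-\alpha}$, with the stated norming constants obtained exactly as in (a) after undoing the reflection.

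The substantial parts are (c), (d) and (e), which jointly characterize the Gumbel domain and where the main obstacle lies, since here no single scaling exponent is available and one must instead produce a correct \emph{auxiliary function} $v$. For (d) I would argue directly: if $F$ is a von-Mises function with $1-F(x)=c\exp(-\int_z^x du/v(u))$ and $v'\to0$, then $\frac{1-F(t+v(t)x)}{1-F(t)}=\exp\!\big(-\int_t^{t+v(t)x}\frac{du}{v(u)}\big)$, and since $v'\to0$ forces $v(u)/v(t)\to1$ uniformly for $u$ between $t$ and $t+v(t)x$, the exponent tends to $-x$, yielding the limit $e^{-x}$ and hence $F\in\mathcal{D}_l(\Lambda)$ with $a_n=v(b_n)$. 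For (e) I would check that (\ref{von-Mises}) makes $v=(1-F)/F'$ admissible: a direct computation gives $v'=-1-\frac{(1-F)F''}{(F')^2}$, so (\ref{von-Mises}) forces $v'\to0$ and places $F$ among the von-Mises functions of (d), while the converse is a routine differentiation. Finally, for the general characterization (c) I would appeal to de Haan's theory, in which the existence of some positive $v$ with $\lim_{t\uparrow r(F)}(1-F(t+v(t)x))/(1-F(t))=e^{-x}$ is precisely the statement that $1-F$ is $\Gamma$-varying; this guarantees $\int_a^{r(F)}(1-F(s))\,ds<\infty$ and permits the canonical choice $v(t)=\int_t^{r(F)}(1-F(s))\,ds/(1-F(t))$, the mean residual life, after which uniqueness of $v$ up to asymptotic equivalence yields the norming constants $a_n=v(b_n)$, $b_n=F^-(1-1/n)$. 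I expect the genuinely delicate point to be justifying the uniform convergence that lets the limit pass through the integral in (c) and (d), which is exactly where the $\Gamma$-variation machinery, rather than elementary estimates, is needed.
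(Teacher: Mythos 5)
The paper offers no proof of Theorem \ref{T-l-max} to compare against: it sits in the Appendix as a compendium of classical domain-of-attraction criteria, repeated verbatim ``for ease of reference'' from Galambos (1987), Resnick (1987) and Embrechts et al.\ (1997), with parts (d) and (e) explicitly attributed to Proposition 1.1 of Resnick (1987). So the only meaningful comparison is with the cited literature, and there your sketch follows the standard route exactly: reduce everything to the tail condition (\ref{Introduction_e1.2}), use Karamata/Gnedenko theory for (a)--(b), and the von-Mises/de Haan circle of ideas for (c)--(e). The details you do supply are sound: the identification $a_n=F^-(1-1/n)$ and the sequential-to-continuous upgrade in (a) (the precise tool is the lemma on regular variation along sequences $c_n\to\infty$ with $c_{n+1}/c_n\to 1$, which your appeal to the uniform convergence theorem implicitly invokes), the reflection $x\mapsto r(F)-1/x$ for (b), the computation $v'=-1-\frac{(1-F)F''}{(F')^2}$ showing that (\ref{von-Mises}) forces $v'\to 0$ in (e), and the uniform bound $\left|\frac{v(u)}{v(t)}-1\right|\le x\sup_{s\ge t}|v'(s)|$ that lets the limit pass through the exponent in (d) --- this is precisely Resnick's argument.

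Two caveats keep your proposal at the level of an outline rather than a proof, though neither is a wrong step given that the paper itself treats the theorem as a citation. First, the necessity half of (c) --- that $F\in\mathcal{D}_l(\Lambda)$ \emph{forces} the existence of an auxiliary function $v$, the finiteness of $\int_a^{r(F)}(1-F(s))\,ds$, and the admissibility of the mean-residual-life choice --- is outsourced wholesale to ``de Haan's theory''; that is the genuinely hard content of the theorem and is asserted, not proved, by your argument. Second, some small points are glossed: passing from the ratio limit to $n\{1-F(a_nx+b_n)\}\to -\log G(x)$ requires checking $n\{1-F(b_n)\}\to 1$ at the possibly discontinuous quantile $b_n=F^-(1-1/n)$, which needs $\lim_{t}(1-F(t-))/(1-F(t))=1$ (itself a consequence of the regular or $\Gamma$-variation you assume); and you never address the alternative choice $a_n=F^-\left(1-\frac{1}{ne}\right)-b_n$ in (c) or the converse sentence of (e). These are the standard lacunae of a summary of known results, consistent with how the paper itself uses the statement.
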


Acknowledgement: The first author thanks Prof. R Vasudeva for bringing the thesis Vasantalakshmi M.S. (2010) and a particular case of the problem discussed to his notice.

\bibliographystyle{amsplain}

\end{document}